\documentclass[reqno]{amsart}
\usepackage{amssymb}
\usepackage{graphicx}
\usepackage{subcaption}
\usepackage{tikz}
\usepackage{makecell}
\usepackage{empheq}
\usepackage{url}
\usepackage{float}
\usepackage{hyperref}

\newtheorem{theorem}{Theorem}[section]
\newtheorem{lemma}[theorem]{Lemma}
\newtheorem{proposition}[theorem]{Proposition}
\theoremstyle{definition}

\theoremstyle{remark}
\newtheorem{remark}[theorem]{Remark}

\numberwithin{equation}{section}

\newcommand{\R}{\mathbb{R}}

\def\d{\ensuremath{\mathrm{d}}}

\newcommand{\xx}{{\mathbf{x}}}
\newcommand{\yy}{{\mathbf{y}}}
\newcommand{\zz}{{\mathbf{z}}}

\newcommand{\RR}{{\mathbb{R}}}

\newenvironment{equationa*}{\begin{equation*}\begin{aligned}} {\end{aligned}\end{equation*}}

\begin{document}

\title[ND-TNN]
{ND-TNN: Tensor-Neural-Network Approximation for High-Dimensional Nonlocal Diffusion Models}

\author[]{Ziyue Cai}
\address{Ziyue Cai: Qiuzhen College, Tsinghua University,
Beijing, China, 100084.}
\curraddr{}
\email{cai-zy22@mails.tsinghua.edu.cn}

\author[]{Zuoqiang Shi}
\address{Zuoqiang Shi: Yau Mathematical Sciences Center, Tsinghua University,
Beijing, China, 100084. \&
Yanqi Lake Beijing Institute of Mathematical Sciences and Applications,
 Beijing, China, 101408.}
\email{zqshi@tsinghua.edu.cn}
\thanks{This work was supported by the National Natural Science Foundation of China (NSFC) under Grant 92370125.}

\subjclass[2020]{Primary 65R20, 65N12, 65D40, 68T07}



\keywords{nonlocal diffusion model, tensor neural network, asymptotically
compatible error}

\begin{abstract}
We study a numerical method, built on the tensor neural network (TNN)
architecture introduced in \cite{wang2022tensor}, for solving nonlocal
diffusion models in high-dimensional spaces. The tensor-product structure of
the TNN ansatz, combined with the separability of the Gaussian kernel, reduces
the high-dimensional integrals in the nonlocal energy to products of
low-dimensional integrals, which are evaluated by Gauss--Legendre quadrature;
nonseparable source and boundary data are handled by a TNN-based
preconditioning step. For the Dirichlet boundary condition, we establish the
asymptotically compatible $L^2$ error estimate
\[
\|u_{\mathrm{loc}}-u_{\delta,p}\|_{L^2(\Omega)}
\le
C\!\left(\frac{\varepsilon_f}{\sqrt\delta}
+\frac{\varepsilon_g}{\delta}
+\frac{\varepsilon_u}{\sqrt\delta}
+\eta_{\mathrm{opt}}\right)
+C\sqrt\delta,
\]
where $\varepsilon_f$, $\varepsilon_g$ and $\varepsilon_u$ are the data and
trial-class approximation errors and $\eta_{\mathrm{opt}}$ is the
optimization residual. For the Neumann boundary condition, the $L^2$
estimate is improved to
$O(\varepsilon_f+\varepsilon_g/\sqrt\delta+\varepsilon_u
+\eta_{\mathrm{opt}}+\delta)$,
and an $H^1$ gradient estimate is further obtained through a smoothing
post-processing step. Numerical experiments on tensor-product domains up to
$d=20$ support the theoretical results, and additional tests on two- and
three-dimensional $L$-shaped domains demonstrate the practical robustness of
the method beyond the smooth-domain setting covered by the analysis.
\end{abstract}

\maketitle

\section{Introduction}
\label{sec:introduction}

Nonlocal diffusion models have attracted considerable attention in recent
decades as integral-operator counterparts of classical elliptic equations.
By replacing differential operators with integral operators that average
differences over a neighborhood of radius $\delta$, nonlocal models can
capture long-range interactions and singular phenomena that classical
partial differential equations struggle to describe. They have found
applications in diverse fields including fracture mechanics
\cite{littlewood2010simulation,ha2011characteristics,silling2000reformulation},
image processing
\cite{buades2010image,gilboa2007nonlocal,gilboa2009nonlocal,shi2017weighted},
fractional Laplacian
\cite{ainsworth2018towards,bahr2024implementation,bonito2018numerical},
multiscale modeling
\cite{abdulle2015reduced,askari2008peridynamics,du2019multiscale},
and machine learning
\cite{tao2018nonlocal,wang2018non,zhu2003semi}.
A central theoretical property is the asymptotic compatibility
\cite{du2019nonlocal}: under suitable assumptions on the kernel and the
data, the solution of the nonlocal model converges to the solution of the
corresponding local PDE as $\delta\to 0$. In this paper we focus on the
nonlocal diffusion model with a Dirichlet boundary condition
\cite{meng2023maximum}:
\begin{align}
&\frac{1}{\delta^{2}}
\int_\Omega R_\delta(\xx,\yy)
\bigl(u_\delta(\xx)-u_\delta(\yy)\bigr)\,\d\yy
+
\frac{2}{\delta}
\int_{\partial\Omega}
\bar{R}_\delta(\xx,\yy)\,u_\delta(\xx)\,\d S_\yy
\notag\\
&\qquad=
\int_\Omega\bar{R}_\delta(\xx,\yy)f(\yy)\,\d\yy
+
\frac{2}{\delta}
\int_{\partial\Omega}
\bar{R}_\delta(\xx,\yy)g(\yy)\,\d S_\yy,
\qquad\xx\in\Omega,
\label{eq:intro-dirichlet}
\end{align}
and the corresponding Neumann model; here
$R_\delta(\xx,\yy)=\alpha_d\delta^{-d}R(|\xx-\yy|^2/(4\delta^2))$ is the
rescaled kernel, $\bar R_\delta$ is its primitive, and $\delta>0$ is the
nonlocal horizon. As $\delta\to 0$, the solution $u_\delta$ of
\eqref{eq:intro-dirichlet} converges to the solution of the classical
elliptic equation $-\Delta u=f$ in $\Omega$ with $u=g$ on
$\partial\Omega$.

Many numerical methods have been proposed for nonlocal models, including
finite difference methods \cite{tian2013analysis,zhou2010mathematical}, finite element methods
\cite{chen2011continuous,du2013posteriori,meng2026asymptotically}, spectral methods
\cite{du2016asymptotically}, collocation methods
\cite{zhang2016nodal}, and point integral methods \cite{shi2015enforce,li2017point}. A common computational bottleneck is the
evaluation of the double integral over $\Omega\times\Omega$: in $d$ space
dimensions this is a $2d$-dimensional integral, and direct quadrature
rapidly becomes prohibitive as $d$ increases. For the Gaussian kernel
$R(r)=e^{-s^2 r}$, the rescaled kernel factorizes as
\begin{equation}
\label{eq:intro-factorization}
R_\delta(\xx,\yy)
\;\propto\;
\prod_{i=1}^d
\exp\!\left(-\frac{s^2}{4\delta^2}(x_i-y_i)^2\right),
\end{equation}
so that the $2d$-dimensional integral can be decoupled into a product of
$d$ two-dimensional integrals. This key observation was exploited in
\cite{meng2023maximum} to devise a fast implementation of the nonlocal
finite element method on tensor-product domains. The present work uses the same factorization idea together with the TNN
architecture of \cite{wang2022tensor}: the finite element trial space is
replaced by a \emph{tensor neural network} (TNN) trial class, which is
naturally suited to high-dimensional problems and does not require mesh
generation.

The TNN architecture of \cite{wang2022tensor} represents a $d$-variate
function as a finite sum of tensor products of one-dimensional subnetworks:
\begin{equation}
\label{eq:intro-tnn}
u(\xx;\Theta)
= c\sum_{j=1}^{p}\prod_{i=1}^d \phi_{i,j}(x_i;\theta_i),
\end{equation}
where each $\phi_{i,j}$ is a one-dimensional fully connected neural
network. This structure inherits the product structure of
\eqref{eq:intro-factorization} and allows every integral in the nonlocal
energy functional to be evaluated as a product of two-dimensional
integrals, regardless of the spatial dimension $d$. TNN-based methods have
been applied successfully to high-dimensional problems including the
Schr\"{o}dinger equation \cite{zhou2025sum}, eigenvalue problems
\cite{wang2024computing}, and time-fractional PDEs
\cite{lin2026solving}. However, their use for nonlocal models has, to our
knowledge, not been studied.

In this paper, we construct and analyze a variational method for nonlocal
diffusion models with Dirichlet and Neumann boundary conditions by using the
existing TNN ansatz as the trial class. Taking the
Dirichlet problem as an example, the nonlocal equation
\eqref{eq:intro-dirichlet} is equivalent to minimizing, over
$H^1(\Omega)$, the energy functional
\begin{align}
\mathcal{L}_\delta(u)
&= \frac{1}{4\delta^2}
\!\int_\Omega\!\!\int_\Omega\!
R_\delta(\xx,\yy)\bigl(u(\xx)-u(\yy)\bigr)^2\,\d\xx\,\d\yy
+ \frac{1}{\delta}
\!\int_\Omega\!\!\int_{\partial\Omega}\!
\bar R_\delta(\xx,\yy)\,u(\xx)^2\,\d\xx\,\d S_\yy
\notag\\
&\quad
- \!\int_\Omega\!\!\int_\Omega\!
\bar R_\delta(\xx,\yy)\,u(\xx)f(\yy)\,\d\xx\,\d\yy
- \frac{2}{\delta}
\!\int_\Omega\!\!\int_{\partial\Omega}\!
\bar R_\delta(\xx,\yy)\,u(\xx)g(\yy)\,\d\xx\,\d S_\yy .
\label{eq:intro-energy}
\end{align}
For general $f$ and $g$, the integrals in \eqref{eq:intro-energy} cannot be
separated directly. To overcome this, we first approximate $f$ and $g$ by
TNN surrogates $\tilde f$, $\tilde g$ via empirical $L^2$ loss
minimization, and then minimize the modified loss (with $f,g$ replaced by
$\tilde f, \tilde g$) over the TNN class \eqref{eq:intro-tnn}. Combining
\eqref{eq:intro-factorization} with \eqref{eq:intro-tnn}, every term in
this loss decomposes into a product of two-dimensional integrals that can
be evaluated efficiently by composite Gauss--Legendre quadrature.

On the theoretical side, we establish asymptotically compatible error
estimates for both boundary conditions. The total error is decomposed into
three contributions: the data preconditioning errors ($\varepsilon_f$,
$\varepsilon_g$), the best-approximation error of the TNN trial class
($\varepsilon_u$), and the optimization residual ($\eta_{\mathrm{opt}}$).
For the Dirichlet problem, we prove
\begin{equation}
\label{eq:intro-dirichlet-est}
\|u_\delta - u_{\delta,p}\|_{L^2(\Omega)}
\le C\!\left(\frac{\varepsilon_f}{\sqrt\delta}
           +\frac{\varepsilon_g}{\delta}
           +\frac{\varepsilon_u}{\sqrt\delta}
           +\eta_{\mathrm{opt}}\right),
\end{equation}
which, combined with the local-limit estimate
$\|u_{\mathrm{loc}}-u_\delta\|_{H^1(\Omega)}\le C\sqrt\delta$, gives the
asymptotically compatible bound for
$\|u_{\mathrm{loc}}-u_{\delta,p}\|_{L^2(\Omega)}$. For the Neumann
problem, the more favorable coercivity of the Neumann energy yields the
improved estimate
\begin{equation}
\label{eq:intro-neumann-est}
\|u_{\delta,N} - u_{\delta,p,N}\|_{L^2(\Omega)}
\le C\!\left(\varepsilon_f
           +\frac{\varepsilon_g}{\sqrt\delta}
           +\varepsilon_u
           +\eta_{\mathrm{opt}}\right),
\end{equation}
where $\varepsilon_f$ and $\varepsilon_u$ now appear without any negative
power of $\delta$. A gradient ($H^1$) estimate is further obtained via a
smoothing post-processing. In both cases, as $\varepsilon_f$,
$\varepsilon_g$, $\varepsilon_u$, $\eta_{\mathrm{opt}}\to 0$ and
$\delta\to 0$, the TNN output converges to the local PDE solution, so the
method is asymptotically compatible. For completeness, we also provide
detailed proofs, based on standard nonlocal energy estimates, of the
well-posedness and the local-limit estimate for the nonlocal Neumann model.

The theoretical results are supported by extensive numerical experiments.
For tensor-product data on $\Omega=[0,1]^d$ with $d=3,5,10,20$, the
method achieves small residuals in all cases, and both the $L^2$ and $H^1$
errors between the TNN output and the local solution exhibit a numerical
convergence rate of nearly order $1$ in $\delta$, which exceeds the
$O(\sqrt\delta)$ prediction of \eqref{eq:intro-dirichlet-est}; a rigorous
explanation of this superconvergence is left to future work. For
non-tensor-product data, where all three error components $\varepsilon_f$,
$\varepsilon_g$, $\varepsilon_u$ are nonzero, a hyperparameter study shows
that the TNN approximation error of the source term remains at the level of
$10^{-3}$ in relative RMSE for dimensions up to $d=20$, confirming that
the TNN class remains expressive in high dimensions under a fixed
per-dimension subnetwork  architecture. The method is further tested on two- and three-dimensional
$L$-shaped domains under Neumann boundary conditions, demonstrating practical
robustness beyond the smooth tensor-product setting covered by the theory.

The rest of the paper is organized as follows.
Section~\ref{sec:preliminaries} reviews the nonlocal models, the TNN
architecture, and basic notation. Section~\ref{sec:main-result} presents
the variational workflow and states the main error estimates.
Section~\ref{sec:proof} is devoted to the proofs of the main theorems.
Numerical experiments are reported in Section~\ref{sec:numerical-tests}.
Section~\ref{sec:conclusions} concludes the paper. Proofs of the
well-posedness and local-limit results for the Neumann model are given in
Appendices~\ref{appendix:neumann-wellposedness}--\ref{appendix:neumann-locallimit}.

\section{Preliminaries and Notation}
\label{sec:preliminaries}

\subsection{Nonlocal diffusion model with a Dirichlet boundary condition}
\label{subsec:nonlocal-dirichlet}

We first consider the elliptic problem with a Dirichlet boundary condition:
\begin{equation}
  \label{eq:dirichlet-local}
  \left\{
    \begin{aligned}
      -\Delta u(\xx)&=f(\xx), && \xx \in\Omega,\\
      u(\xx)&=g(\xx), && \xx\in \partial\Omega.
    \end{aligned}
  \right.
\end{equation}
Throughout the theoretical sections, $\Omega\subset\RR^d$ is a bounded connected domain with sufficiently smooth boundary . In this subsection we assume
$f \in H^1(\Omega)$ and $g \in H^{5/2}(\partial \Omega)$; the rationale for
these regularity assumptions is explained in
Remark~\ref{rem:dirichlet-regularity} below.

Following \cite{meng2023maximum}, the nonlocal counterpart of
\eqref{eq:dirichlet-local} reads
\begin{align}
&\frac{1}{\delta^{2}}
\int_\Omega R_\delta(\xx,\yy)
\bigl(u_\delta(\xx)-u_\delta(\yy)\bigr)\,\d\yy
+
\frac{2}{\delta}
\int_{\partial \Omega}
\bar{R}_\delta(\xx,\yy)\,u_\delta(\xx)\,\d S_\yy
\notag \\
&\qquad =
\int_\Omega \bar{R}_\delta(\xx,\yy)f(\yy)\,\d\yy
+
\frac{2}{\delta}
\int_{\partial \Omega}
\bar{R}_\delta(\xx,\yy)g(\yy)\,\d S_\yy .
\label{eq:dirichlet-nonlocal}
\end{align}

For the standard analytical statements recalled in this subsection, we impose the following assumptions on the kernel function $R$.
\begin{itemize}
\item[(i)] \textbf{Regularity:} $R\in C^1([0,+\infty))$.
\item[(ii)] \textbf{Positivity and compact support:} $R(r)\ge 0$ for all
$r\ge 0$, and $R(r)=0$ for all $r>1$.
\item[(iii)] \textbf{Non-degeneracy:} there exists $\gamma>0$ such that
$R(r)\ge \gamma$ for all $r\in[0,\tfrac12]$.
\end{itemize}
We define the primitive of $R$ by
\[
  \bar{R}(r)=\int_r^{+\infty}R(s)\,\d s .
\]
Clearly $\bar R$ is nonnegative, compactly supported, and non-degenerate on
$[0,\tfrac12]$.

For $\delta>0$, we define the rescaled kernels
\begin{equation}
  \label{eq:kernel-def}
  R_\delta(\xx,\yy)
  =
  \alpha_d\,\delta^{-d}
  R\!\left(\frac{|\xx-\yy|^2}{4\delta^2}\right),
  \qquad
  \bar{R}_\delta(\xx,\yy)
  =
  \alpha_d\,\delta^{-d}
  \bar{R}\!\left(\frac{|\xx-\yy|^2}{4\delta^2}\right),
\end{equation}
where the constant $\alpha_d$ is chosen so that
\[
    \int_{\RR^d}
    \alpha_d\,\delta^{-d}\,
    \bar{R}\!\left(\frac{|\xx-\yy|^2}{4\delta^2}\right)
    \,\d\yy
    =
    \alpha_d S_d
    \int_0^{2}\bar{R}(r^2/4)\,r^{d-1}\,\d r
    =
    1,
\]
and $S_d$ denotes the surface area of the unit sphere in $\RR^d$.

The well-posedness of \eqref{eq:dirichlet-nonlocal} and its local limit were
established in \cite{meng2023maximum}. We recall the relevant statements
below.

\begin{proposition}[Well-posedness of the nonlocal Dirichlet problem]
\label{prop:dirichlet-wellposedness}
For any $\delta>0$, $f\in L^2(\Omega)$ and $g\in L^2(\partial\Omega)$, the
problem \eqref{eq:dirichlet-nonlocal} admits a unique solution
$u_\delta\in H^1(\Omega)$. Moreover, there exists a constant $C>0$,
independent of $\delta$, such that
\begin{equation}
\label{eq:dirichlet-wellposedness-bound}
\|u_\delta\|_{H^1(\Omega)}
\le
C\!\left(\|f\|_{L^2(\Omega)}
+\frac{1}{\sqrt{\delta}}\,\|g\|_{L^2(\partial\Omega)}\right).
\end{equation}
\end{proposition}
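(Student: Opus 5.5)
Our plan is to cast \eqref{eq:dirichlet-nonlocal} as the Euler--Lagrange equation of a symmetric bilinear form and obtain existence, uniqueness and the bound through a $\delta$-uniform coercivity estimate. Multiply \eqref{eq:dirichlet-nonlocal} by a test function $v$ and integrate over $\Omega$, symmetrizing the double integral. This gives the weak form $B_\delta(u_\delta,v)=F_\delta(v)$, where
\[
B_\delta(u,v)=\frac{1}{2\delta^2}\int_\Omega\int_\Omega R_\delta(\xx,\yy)(u(\xx)-u(\yy))(v(\xx)-v(\yy))\,\d\xx\,\d\yy+\frac{2}{\delta}\int_\Omega\int_{\partial\Omega}\bar R_\delta(\xx,\yy)u(\xx)v(\xx)\,\d S_\yy\,\d\xx,
\]
and $F_\delta$ collects the two right-hand-side terms. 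This is exactly the first variation of \eqref{eq:intro-energy}.

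\textbf{Coercivity.} The first step is to prove that
\[
B_\delta(u,u)\ge c\|u\|_{L^2(\Omega)}^2
\]
with $c$ independent of $\delta$. We would do this in three moves.
\begin{itemize}
\item Use the nonlocal Poincar\'e-type inequality from the point-integral literature: the nonlocal Dirichlet energy plus the boundary term $\frac1\delta\int_{\partial\Omega}$-type penalty controls $\|u\|_{L^2}^2$.
\item Derive that inequality by comparing $u$ with its smoothed version $\bar u(\xx)=\frac{1}{w_\delta(\xx)}\int_\Omega \bar R_\delta(\xx,\yy)u(\yy)\,\d\yy$. Here $\|u-\bar u\|_{L^2}^2\lesssim \delta^2\cdot$(nonlocal energy) and $\|\nabla\bar u\|_{L^2}^2\lesssim$ nonlocal energy.
\item Apply the classical Poincar\'e/trace inequality to $\bar u$. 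The boundary penalty term, which behaves like $\frac1\delta$ times an $L^2$ norm of $u$ on a $\delta$-layer near $\partial\Omega$, controls $\bar u$ on $\partial\Omega$.
\end{itemize}
Lax--Milgram on $L^2(\Omega)$, or equivalently on the energy space, then gives existence and uniqueness.

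\textbf{$H^1$ regularity.} Rewrite the equation pointwise as
\[
u_\delta(\xx)=\frac{1}{w_\delta(\xx)+\frac{2\delta}{1}\int_{\partial\Omega}\bar R_\delta(\xx,\yy)\,\d S_\yy}\left(\int_\Omega R_\delta(\xx,\yy)u_\delta(\yy)\,\d\yy+\delta^2(\text{RHS})\right),
\]
where $w_\delta(\xx)=\int_\Omega R_\delta(\xx,\yy)\,\d\yy$. The right side is a smooth-kernel average, so it lies in $H^1$. Differentiating gives $\|\nabla u_\delta\|_{L^2}\lesssim \delta^{-1}\|u_\delta\|+\dots$, which is not uniform in $\delta$. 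To get the uniform bound, we instead use the standard estimate that $\|\nabla u\|_{L^2}^2$ is controlled by the nonlocal energy plus $\delta^2$ times the squared $L^2$ norm of the equation's left-hand side (the ``$\bar u$'' argument again). For the solution, that left-hand side equals the data terms.

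\textbf{Energy estimate and main obstacle.} Test with $v=u_\delta$. The $f$-term is bounded by $\|f\|\,\|u_\delta\|$. The $g$-term $\frac2\delta\int\int\bar R_\delta\, u_\delta g$ is bounded by Cauchy--Schwarz as
\[
\Bigl(\tfrac1\delta\int\int\bar R_\delta\,u_\delta^2\Bigr)^{1/2}\Bigl(\tfrac1\delta\int\int \bar R_\delta\, g^2\Bigr)^{1/2}.
\]
The first factor is absorbed into the boundary part of $B_\delta$. The second factor is at most $C\delta^{-1/2}\|g\|_{L^2(\partial\Omega)}$, since $\int_\Omega\bar R_\delta(\xx,\yy)\,\d\xx\le C$. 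Young's inequality then yields
\[
\text{energy}+\|u_\delta\|^2\lesssim \|f\|^2+\delta^{-1}\|g\|^2,
\]
and combining with the gradient control gives \eqref{eq:dirichlet-wellposedness-bound}. We expect the main obstacle to be the $\delta$-uniform Poincar\'e/coercivity with the boundary penalty, and the uniform gradient bound. For both we would rely on the smoothing-operator technique of \cite{meng2023maximum} and the point integral papers, using the compact support and non-degeneracy assumptions (ii)--(iii) on $R$.
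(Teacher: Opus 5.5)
Your overall plan follows the blueprint the paper uses for the Neumann analogue in Appendix~\ref{appendix:neumann-wellposedness}; the Dirichlet statement itself is only quoted from \cite{meng2023maximum}. The steps are: $\delta$-uniform $L^2$ coercivity, Lax--Milgram in $L^2(\Omega)$, the energy identity, and the smoothing decomposition for the gradient. Your treatment of the $g$-term is the correct Dirichlet-specific change. Bounding it as in \eqref{eq:neumann-boundary-est} would give $\frac{2}{\delta}\cdot\frac{C}{\sqrt\delta}\|u_\delta\|_{L^2(\Omega)}\|g\|_{L^2(\partial\Omega)}$, hence only a $\delta^{-3/2}$ bound. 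Absorbing the first Cauchy--Schwarz factor into $E_{\delta,2}$ gives the sharp $\delta^{-1/2}$. There is a minor slip: $\bar u$ must be normalized by $\int_\Omega\bar R_\delta(\xx,\yy)\,\d\yy$, not by $w_\delta$. Otherwise $\bar u$ does not reproduce constants, and $\|u-\bar u\|_{L^2}^2\lesssim\delta^2E_{\delta,1}(u)$ fails.

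There is, however, a genuine gap in the $H^1$ step. Write $A_\delta u$ for the left-hand side of \eqref{eq:dirichlet-nonlocal}. The estimate you call standard, $\|\nabla u\|_{L^2}^2\lesssim E_\delta(u)+\delta^2\|A_\delta u\|_{L^2}^2$, is false. For fixed $\delta$ both right-hand terms are finite for every $u\in L^2(\Omega)$, so any $u\in L^2(\Omega)\setminus H^1(\Omega)$ violates it. With $W:=w_\delta+2\delta\int_{\partial\Omega}\bar R_\delta(\cdot,\yy)\,\d S_\yy$, the exact identity is
\[
u=\frac{w_\delta}{W}\,S_\delta u+\frac{\delta^2}{W}\,A_\delta u .
\]
Besides $\frac{w_\delta}{W}\nabla S_\delta u$ and $\delta^2\nabla(W^{-1})A_\delta u$, which are the two pieces you account for, its gradient contains $\frac{\delta^2}{W}\nabla(A_\delta u)$ and $\nabla(w_\delta/W)\,S_\delta u$.

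The first of these is where the data structure must be used. For the solution, $A_\delta u_\delta$ is the kernel-smoothed data, and $|\nabla_\xx\bar R_\delta|=\frac{|\xx-\yy|}{2\delta^2}R_\delta$ gives $\delta^2\|\nabla\!\int_\Omega\bar R_\delta f\,\d\yy\|_{L^2}\le C\delta\|f\|_{L^2}$. It also gives $\delta^2\|\nabla\frac{2}{\delta}\int_{\partial\Omega}\bar R_\delta g\,\d S_\yy\|_{L^2}\le C\delta^{-1/2}\|g\|_{L^2(\partial\Omega)}$. These are exactly the $I_1,I_2$ estimates of the appendix, with one extra factor $1/\delta$ from the Dirichlet data.

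The second term has no Neumann counterpart. Here $|\nabla(w_\delta/W)|\le C/\delta$ on a boundary layer of width $O(\delta)$, so you need $\delta^{-2}\|S_\delta u\|_{L^2(\text{layer})}^2\lesssim E_\delta(u)$. The penalty $E_{\delta,2}$ only controls $\delta^{-2}\int u^2$ on the thinner layer where $\int_{\partial\Omega}\bar R_\delta\,\d S_\yy\ge c/\delta$. Extending this to the full interaction layer needs a short layer-propagation argument using $E_{\delta,1}$ and the non-degeneracy assumption (iii). With these two estimates added, your energy bound closes the argument and yields \eqref{eq:dirichlet-wellposedness-bound}.
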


\begin{proposition}[Local limit of the nonlocal Dirichlet problem]
\label{prop:dirichlet-locallimit}
Let $u_\delta$ be the solution of \eqref{eq:dirichlet-nonlocal}, and assume
that the local Dirichlet problem \eqref{eq:dirichlet-local} admits a solution
$u_{\mathrm{loc}}\in H^3(\Omega)$. Then, for all sufficiently small
$\delta>0$,
\begin{equation}
\label{eq:dirichlet-locallimit-bound}
\|u_{\mathrm{loc}}-u_\delta\|_{H^1(\Omega)}
\le
C\sqrt{\delta}\,\|u_{\mathrm{loc}}\|_{H^3(\Omega)},
\end{equation}
where $C>0$ is independent of $\delta$.
\end{proposition}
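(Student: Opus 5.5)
The plan is the standard truncation-error/stability argument for nonlocal approximations of Poisson problems, as in the point integral literature. Write $L_\delta$ for the operator on the left of \eqref{eq:dirichlet-nonlocal}, i.e.
\[
L_\delta v(\xx)=\frac{1}{\delta^2}\int_\Omega R_\delta(\xx,\yy)\bigl(v(\xx)-v(\yy)\bigr)\,\d\yy+\frac{2}{\delta}\int_{\partial\Omega}\bar R_\delta(\xx,\yy)\,v(\xx)\,\d S_\yy .
\]
Let $e=u_{\mathrm{loc}}-u_\delta$. Since $u_\delta$ satisfies \eqref{eq:dirichlet-nonlocal} exactly, we have $L_\delta e=r$, where
\[
r(\xx)=L_\delta u_{\mathrm{loc}}(\xx)-\int_\Omega\bar R_\delta(\xx,\yy)f(\yy)\,\d\yy-\frac{2}{\delta}\int_{\partial\Omega}\bar R_\delta(\xx,\yy)\,g(\yy)\,\d S_\yy
\]
is the truncation residual. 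The proof then splits into a consistency step, which decomposes and bounds $r$, and a stability step, which controls $\|e\|_{H^1}$ by suitable norms of the pieces of $r$.

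For consistency, Taylor-expand $u_{\mathrm{loc}}(\yy)$ about $\xx$ to third order and use the identity $\nabla_\yy\bar R_\delta=\tfrac{1}{2\delta^2}(\xx-\yy)R_\delta$. Integrate by parts in the second-order term, which produces a boundary integral $\int_{\partial\Omega}\bar R_\delta\,\partial_n u_{\mathrm{loc}}$ and an interior term $-\int_\Omega\bar R_\delta\,\Delta u_{\mathrm{loc}}=\int_\Omega\bar R_\delta f$. Replace $g$ by $u_{\mathrm{loc}}|_{\partial\Omega}$ and expand $u_{\mathrm{loc}}(\yy)-u_{\mathrm{loc}}(\xx)$ on the boundary. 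The residual then becomes $r=r_{\mathrm{in}}+r_{\mathrm{bd}}$, where the interior part satisfies $\|r_{\mathrm{in}}\|_{L^2}\le C\delta\|u_{\mathrm{loc}}\|_{H^3}$. The boundary-layer part has the form
\[
r_{\mathrm{bd}}=\frac{1}{\delta}\int_{\partial\Omega}\bar R_\delta(\xx,\yy)\,\bigl(\text{first-order terms in }(\xx-\yy)\cdot\nabla u_{\mathrm{loc}}\text{ and }\partial_n u_{\mathrm{loc}}\bigr)\,\d S_\yy,
\]
supported in a $2\delta$-layer. After this cancellation $r_{\mathrm{bd}}$ is $O(1)$ pointwise and not small in $L^2$, but it carries a divergence structure that is weak in the dual sense. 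Concretely, I will show $|\langle r_{\mathrm{bd}},v\rangle|\le C\delta^{1/2}\|u_{\mathrm{loc}}\|_{H^3}\bigl(\|v\|_{H^1}+\delta^{-1/2}\|v\|_{L^2(\partial\Omega)}\bigr)$. The argument rewrites the tangential part of $r_{\mathrm{bd}}$ via the integration-by-parts identity and controls the normal part by the layer width $\delta$, combined with a trace inequality on the layer.

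For stability, test $L_\delta e=r$ with $e$ itself. The bilinear form gives
\[
\frac{1}{2\delta^2}\iint_{\Omega\times\Omega} R_\delta\,(e(\xx)-e(\yy))^2\,\d\xx\,\d\yy+\frac{2}{\delta}\iint_{\Omega\times\partial\Omega}\bar R_\delta\, e(\xx)^2\,\d\xx\,\d S_\yy .
\]
This quantity bounds $\|e\|^2_{L^2}$ via the nonlocal Poincaré inequality, and it bounds a smoothed boundary trace of order $\delta^{-1}\|e\|^2_{L^2(\partial\Omega)}$, both results underlying Proposition~\ref{prop:dirichlet-wellposedness}. To reach the full $H^1$ norm I will use the same device as in that proposition: $\|\nabla e\|_{L^2}$ is controlled by the nonlocal energy together with $\|r\|$, through the representation of $e$ as a smoothing of $\delta^2 r$ plus a kernel average of $e$.

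Combining the two steps, the interior residual contributes $\delta\|u_{\mathrm{loc}}\|_{H^3}\|e\|_{L^2}$, and the boundary residual contributes $\delta^{1/2}\|u_{\mathrm{loc}}\|_{H^3}$ times the square root of the energy. Absorbing these terms gives an energy bound of $C\delta\|u_{\mathrm{loc}}\|^2_{H^3}$, hence $\|e\|_{H^1}\le C\sqrt\delta\,\|u_{\mathrm{loc}}\|_{H^3}$.

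The main obstacle is the boundary-layer residual: showing that the first-order boundary terms in $r_{\mathrm{bd}}$ pair against the energy with only a $\sqrt\delta$ loss. I would first try the approach of Shi and Sun's point integral analysis, which splits $r_{\mathrm{bd}}$ into normal and tangential components and uses the smoothness of $\partial\Omega$ to flatten the boundary locally. That analysis is also where $H^3$ regularity is needed, and where the $H^{5/2}$ assumption on $g$ enters through elliptic regularity.
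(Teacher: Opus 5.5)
Your outline is correct and follows the standard consistency–stability route the paper relies on. The paper does not reprove this proposition; it quotes it from \cite{meng2023maximum}. Its own Neumann analogue in Appendix~\ref{appendix:neumann-locallimit}, however, uses exactly your ingredients: a split $r=r_{\mathrm{in}}+r_{\mathrm{bd}}$ with a dual bound on $r_{\mathrm{bd}}$, the identity $E_\delta(e)=(r,e)_{L^2(\Omega)}$ together with coercivity, and a representation of $e$ as a kernel average plus $\delta^2 r$ to recover $\|\nabla e\|_{L^2(\Omega)}$.

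The one slip is the size of the Dirichlet boundary-layer residual. It is $O(\delta^{-1})$ pointwise on the $2\delta$-layer, hence $O(\delta^{-1/2})$ in $L^2$, not $O(1)$. This is harmless: the dual bound you state, $|\langle r_{\mathrm{bd}},v\rangle|\le C\|u_{\mathrm{loc}}\|_{H^3(\Omega)}\bigl(\sqrt\delta\,\|v\|_{H^1(\Omega)}+\|v\|_{L^2(\partial\Omega)}\bigr)$, is the correct one, and the terms $\delta\|r_{\mathrm{bd}}\|_{L^2(\Omega)}$ and $\delta^2\|\nabla r_{\mathrm{bd}}\|_{L^2(\Omega)}$ entering the gradient representation are still $O(\sqrt\delta)$.
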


\begin{remark}
\label{rem:dirichlet-regularity}
The regularity assumptions on $f$ and $g$ are motivated by the standard
elliptic regularity theory; see, for instance, \cite{evans2022partial}. In
particular, if $f\in H^1(\Omega)$ and $g\in H^{5/2}(\partial\Omega)$, then
the solution of \eqref{eq:dirichlet-local} satisfies
\[
\|u_{\mathrm{loc}}\|_{H^3(\Omega)}
\le
C\!\left(\|f\|_{H^1(\Omega)}
+\|g\|_{H^{5/2}(\partial\Omega)}\right).
\]
In this case, Proposition~\ref{prop:dirichlet-locallimit} gives the
$H^1$-norm local-limit estimate at rate $O(\sqrt\delta)$, and consequently the
$L^2$ local-limit error satisfies
$\|u_{\mathrm{loc}}-u_\delta\|_{L^2(\Omega)}\le\|u_{\mathrm{loc}}-u_\delta\|_{H^1(\Omega)}
\le C\sqrt\delta\,\|u_{\mathrm{loc}}\|_{H^3(\Omega)}$,
which is the rate appearing in \eqref{eq:dirichlet-estimate-2}.

Under the stronger assumption $u_{\mathrm{loc}}\in C^4(\overline{\Omega})$,
the maximum principle further yields the pointwise estimate
\[
|u_{\mathrm{loc}}(\xx)-u_\delta(\xx)|
\le C_\Omega\,\delta\,\|u_{\mathrm{loc}}\|_{C^4(\overline{\Omega})},
\qquad \forall\,\xx\in\Omega,
\]
where the constant $C_\Omega>0$ depends only on $\Omega$ and the kernel $R$.
In this stronger setting the $L^2$ local-limit error satisfies
$\|u_{\mathrm{loc}}-u_\delta\|_{L^2(\Omega)}
\le C_\Omega\,|\Omega|^{1/2}\,\delta\,\|u_{\mathrm{loc}}\|_{C^4(\overline{\Omega})}$,
and \eqref{eq:dirichlet-estimate-2} improves to
\[
\|u_{\mathrm{loc}}-u_{\delta,p}\|_{L^2(\Omega)}
\le
C\!\left(\frac{\varepsilon_f}{\sqrt\delta}
+\frac{\varepsilon_g}{\delta}
+\frac{\varepsilon_u}{\sqrt\delta}
+\eta_{\mathrm{opt}}\right)
+C_\Omega\,\delta\,\|u_{\mathrm{loc}}\|_{C^4(\overline{\Omega})},
\]
with a first-order local-limit term; the multiplicative constant depends only
on $\Omega$ and the kernel $R$. The assumption $u_{\mathrm{loc}}\in C^4(\overline{\Omega})$
requires data regularity beyond $f\in H^1(\Omega)$ and
$g\in H^{5/2}(\partial\Omega)$; since the $H^3$ assumption on
$u_{\mathrm{loc}}$ is sufficient for the proof of
\eqref{eq:dirichlet-estimate-2}, we do not impose it in the theorem
statement.
\end{remark}

\subsection{Nonlocal diffusion model with a Neumann boundary condition}
\label{subsec:nonlocal-neumann}

We also consider the elliptic problem with a Neumann boundary condition:
\begin{equation}
\label{eq:neumann-local}
  \left\{
    \begin{aligned}
      -\Delta u(\xx)+u(\xx)&=f(\xx), && \xx \in\Omega,\\
      \frac{\partial u}{\partial \mathbf{n}}(\xx)&=g(\xx),
      && \xx\in \partial\Omega.
    \end{aligned}
  \right.
\end{equation}
The additional zeroth-order term ensures the well-posedness of the Neumann
problem. The corresponding nonlocal formulation is taken from \cite{meng2023maximum}:
\begin{align}
&\frac{1}{\delta^{2}}
\int_\Omega R_\delta(\xx,\yy)
\bigl(u_{\delta,N}(\xx)-u_{\delta,N}(\yy)\bigr)\,\d\yy
+
\int_\Omega
\bar{R}_\delta(\xx,\yy)\,u_{\delta,N}(\yy)\,\d\yy
\notag \\
&\qquad =
\int_\Omega
\bar{R}_\delta(\xx,\yy)f(\yy)\,\d\yy
+
2\int_{\partial\Omega}
\bar{R}_\delta(\xx,\yy)g(\yy)\,\d S_\yy .
\label{eq:neumann-nonlocal}
\end{align}

The following well-posedness and local-limit results will be used in the
subsequent analysis. We provide the proof details needed for the present
analysis in
Appendices~\ref{appendix:neumann-wellposedness}--\ref{appendix:neumann-locallimit}.

\begin{proposition}[Well-posedness of the nonlocal Neumann problem]
\label{prop:neumann-wellposedness}
For any $\delta>0$, $f\in L^2(\Omega)$ and $g\in L^2(\partial\Omega)$, the
problem \eqref{eq:neumann-nonlocal} admits a unique solution
$u_{\delta,N}\in H^1(\Omega)$. Moreover, there exists a constant $C>0$,
independent of $\delta$, such that
\begin{equation}
\label{eq:neumann-wellposedness-bound}
\|u_{\delta,N}\|_{H^1(\Omega)}
\le
C\!\left(\|f\|_{L^2(\Omega)}
+\frac{1}{\sqrt{\delta}}\,\|g\|_{L^2(\partial\Omega)}\right).
\end{equation}
\end{proposition}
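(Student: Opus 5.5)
The plan is to cast \eqref{eq:neumann-nonlocal} as the Euler--Lagrange equation of a quadratic energy, prove coercivity of the associated bilinear form uniformly in $\delta$, and then treat the problem as an integral equation in $L^2$ and bootstrap to $H^1$. Multiplying \eqref{eq:neumann-nonlocal} by $v$ and integrating over $\Omega$ gives the bilinear form
\[
B_\delta(u,v)=\frac{1}{2\delta^2}\int_\Omega\!\int_\Omega R_\delta(\xx,\yy)\bigl(u(\xx)-u(\yy)\bigr)\bigl(v(\xx)-v(\yy)\bigr)\,\d\xx\,\d\yy+\int_\Omega\!\int_\Omega \bar R_\delta(\xx,\yy)u(\yy)v(\xx)\,\d\xx\,\d\yy .
\]
The first term comes from symmetrizing the $R_\delta$ term, and the second is symmetric because $\bar R_\delta$ is. The right-hand side is the functional $\ell(v)=\int\!\int\bar R_\delta f v+2\int\!\int_{\partial\Omega}\bar R_\delta g v$.

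The first step is $L^2$ coercivity, which I expect to be the main obstacle. The second term equals $(\bar K_\delta u,u)$, where $\bar K_\delta$ is convolution-type averaging with $\bar R_\delta$, and this operator is not obviously positive on a bounded domain. I would combine two standard nonlocal estimates, both uniform in $\delta$. The first is a nonlocal Poincaré-type inequality
\[
\|u\|_{L^2}^2\le C\Bigl(\frac{1}{\delta^2}\int\!\int R_\delta\,(u(\xx)-u(\yy))^2+\|\bar K_\delta u\|_{L^2}^2\Bigr).
\]
The second is the identity
\[
(\bar K_\delta u,u)=\|u\|^2_{w}-\tfrac12\int\!\int\bar R_\delta\,(u(\xx)-u(\yy))^2 ,
\]
where $w(\xx)=\int_\Omega\bar R_\delta(\xx,\yy)\,\d\yy$ is bounded below by a positive constant on $\Omega$ for a smooth domain. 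Since $\bar R\le C R$ fails near the edge of the support, I would first bound $\frac{1}{\delta^2}\int\!\int\bar R_\delta(u(\xx)-u(\yy))^2$ by $C\frac{1}{\delta^2}\int\!\int R_\delta(u(\xx)-u(\yy))^2$ via the standard kernel comparison lemma (the $\bar R$-energy is controlled by the $R$-energy up to a constant). That bound is weighted by $\delta^2$ relative to the nonlocal gradient term, so for $\delta$ small the subtracted piece is absorbed. The result is $B_\delta(u,u)\ge c\bigl(\|u\|_{L^2}^2+\frac{1}{\delta^2}\int\!\int R_\delta(u(\xx)-u(\yy))^2\bigr)$. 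For $\delta$ not small, the coercivity constant is handled by a compactness argument, or directly via $u\mapsto(u-\bar K_\delta u)$ being Fredholm with trivial kernel.

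With coercivity in hand, Lax--Milgram on $L^2(\Omega)$ gives existence and uniqueness. The right-hand side is bounded by $\|f\|_{L^2}+\delta^{-1/2}\|g\|_{L^2(\partial\Omega)}$, since $\|\int_{\partial\Omega}\bar R_\delta(\cdot,\yy)g\,\d S_\yy\|_{L^2(\Omega)}\le C\delta^{-1/2}\|g\|_{L^2(\partial\Omega)}$ by Young's inequality on the $\delta$-layer. This yields the same bound on $\|u\|_{L^2}$ and on the nonlocal energy.

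The last step upgrades to $H^1$. Let $w_R(\xx)=\int_\Omega R_\delta(\xx,\yy)\,\d\yy\ge c>0$. Solving the equation pointwise gives
\[
u=\frac{1}{w_R}\Bigl(\int_\Omega R_\delta(\cdot,\yy)u(\yy)\,\d\yy+\delta^2(\text{rhs}-\bar K_\delta u)\Bigr),
\]
so $u$ equals a smooth kernel average plus $\delta^2$ times data. Differentiating, $\nabla$ falls on kernels and produces factors $\delta^{-1}$. The key estimate is the standard one, $\|\nabla(\frac{1}{w_R}\int R_\delta(\cdot,\yy)(u(\yy)-u(\cdot))\,\d\yy)\|\lesssim(\frac{1}{\delta^2}\int\!\int R_\delta(u(\xx)-u(\yy))^2)^{1/2}$, together with $\delta^2\|\nabla(\text{rhs})\|\lesssim\delta(\|f\|+\delta^{-1/2}\|g\|)$. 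Combined with the energy bound, this gives the $H^1$ bound \eqref{eq:neumann-wellposedness-bound} with constant independent of $\delta$.
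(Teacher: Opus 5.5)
Your proposal follows the paper's proof in Appendix~A essentially step for step. Both use Lax--Milgram on $L^2(\Omega)$ from a uniform coercivity bound, then test with the solution and apply Cauchy--Schwarz with the kernel bounds to get the $L^2$ and energy estimates. Both then rewrite $u_{\delta,N}=S_\delta u_{\delta,N}+\frac{\delta^2}{w_\delta}(\cdots)$, with $\|\nabla S_\delta u_{\delta,N}\|_{L^2(\Omega)}$ controlled by the nonlocal energy and the remaining pieces contributing $C\delta\|f\|_{L^2(\Omega)}+C\sqrt\delta\,\|g\|_{L^2(\partial\Omega)}+C\delta\|u_{\delta,N}\|_{L^2(\Omega)}$.

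The one real difference is that you prove the $L^2$ coercivity yourself, whereas the paper cites \cite[Lemma~3.1]{meng2026asymptotically} for it and for $\|\nabla S_\delta u\|_{L^2(\Omega)}^2\le CE_{\delta,N}(u)$. Your identity for $(\bar K_\delta u,u)$, the kernel comparison and $\int_\Omega\bar R_\delta(\xx,\yy)\,\d\yy\ge C_3$ already give the coercivity for small $\delta$, so the Poincar\'e-type inequality is superfluous.

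One correction: state your key gradient estimate for $\nabla S_\delta u$, with the derivative on $R_\delta(\xx,\yy)/w_R(\xx)$ only. You may subtract $u(\xx)$ because $\int_\Omega\nabla_{\xx}\bigl(R_\delta(\xx,\yy)/w_R(\xx)\bigr)\,\d\yy=0$. As literally written, it bounds $\nabla(S_\delta u-u)$, which for general $u$ is not controlled by the nonlocal energy.
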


\begin{proposition}[Local limit of the nonlocal Neumann problem]
\label{prop:neumann-locallimit}
Let $u_{\delta,N}$ be the solution of \eqref{eq:neumann-nonlocal}, and assume
that the local Neumann problem \eqref{eq:neumann-local} admits a solution
$u_{\mathrm{loc},N}\in H^3(\Omega)$. Then, for all sufficiently small
$\delta>0$,
\begin{equation}
\label{eq:neumann-locallimit-bound}
\|u_{\mathrm{loc},N}-u_{\delta,N}\|_{H^1(\Omega)}
\le
C\delta\,\|u_{\mathrm{loc},N}\|_{H^3(\Omega)},
\end{equation}
where $C>0$ is independent of $\delta$.
\end{proposition}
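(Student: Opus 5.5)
We prove Proposition~\ref{prop:neumann-locallimit} by the standard truncation-error/stability argument for nonlocal models, adapted to the Neumann case. Write $L_\delta$ for the left-hand operator of \eqref{eq:neumann-nonlocal} and $e_\delta=u_{\mathrm{loc},N}-u_{\delta,N}$. Inserting $u_{\mathrm{loc},N}$ into \eqref{eq:neumann-nonlocal} produces a residual
\[
r_\delta(\xx)=L_\delta u_{\mathrm{loc},N}(\xx)-\int_\Omega\bar R_\delta(\xx,\yy)f(\yy)\,\d\yy-2\int_{\partial\Omega}\bar R_\delta(\xx,\yy)g(\yy)\,\d S_\yy ,
\]
so that $L_\delta e_\delta=r_\delta$. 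The proof then has two parts: a consistency estimate for $r_\delta$, and a stability estimate for $L_\delta$ that turns the consistency bound into the $H^1$ bound $C\delta$.

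\textbf{Consistency.} We use the key identity behind the model. Since $\nabla_\yy\bar R_\delta(\xx,\yy)=\frac{\xx-\yy}{2\delta^2}R_\delta(\xx,\yy)$, Taylor expansion of $u=u_{\mathrm{loc},N}$ about $\xx$, followed by integration by parts in $\yy$, gives
\[
\frac1{\delta^2}\int_\Omega R_\delta(u(\xx)-u(\yy))\,\d\yy=-\int_\Omega\bar R_\delta\,\Delta u(\yy)\,\d\yy+2\int_{\partial\Omega}\bar R_\delta\,\partial_{\mathbf n}u\,\d S_\yy+r_{\mathrm{in}}+r_{\mathrm{bd}} .
\]
Substituting $-\Delta u=f-u$ and $\partial_{\mathbf n}u=g$, the main terms cancel exactly against the right-hand side and the zeroth-order term $\int_\Omega\bar R_\delta u$. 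Hence $r_\delta=r_{\mathrm{in}}+r_{\mathrm{bd}}$, where:
\begin{itemize}
\item $r_{\mathrm{in}}$ comes from third-order Taylor terms and can be written in the form $\delta\,\nabla\cdot(\cdot)$ plus a term of size $\delta$, each controlled by $\|u\|_{H^3}$;
\item $r_{\mathrm{bd}}$ is supported in a $2\delta$-layer near $\partial\Omega$. It involves second-derivative terms of the form $\int_{\partial\Omega}\bar{\bar R}_\delta\,(\xx-\yy)\otimes\mathbf n:D^2u$, of pointwise size $O(1)$ in the layer, and is therefore small in $L^2$ only as $O(\sqrt\delta)$.
\end{itemize}
As in the Dirichlet analysis of \cite{meng2023maximum}, we split off the boundary layer term and measure it in a weaker, dual-type norm. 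Concretely, we aim for
\[
|\langle r_{\mathrm{bd}},v\rangle|\le C\delta\|u\|_{H^3}\bigl(\|v\|_{L^2}+E_\delta(v)^{1/2}\bigr),
\]
where $E_\delta(v)=\frac1{\delta^2}\iint R_\delta(v(\xx)-v(\yy))^2$ is the nonlocal energy. The bound is obtained by using the trace-type inequality $\int_{\text{layer}}|v|^2\le C\delta(\|v\|_{L^2}^2+E_\delta(v))$, together with the vanishing moment coming from the symmetry of the kernel, which removes the leading $O(1)$ boundary part.

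\textbf{Stability.} Multiply $L_\delta e_\delta=r_\delta$ by $e_\delta$ and integrate. The left side equals $\frac12E_\delta(e_\delta)+\iint\bar R_\delta e_\delta(\xx)e_\delta(\yy)$. The second term is not obviously coercive; we handle it by writing $e(\xx)e(\yy)=\frac12(e(\xx)^2+e(\yy)^2)-\frac12(e(\xx)-e(\yy))^2$ and using $\int\bar R_\delta\,\d\yy\ge c>0$ away from $\partial\Omega$. The difference term is absorbed by $E_\delta$, since $\bar R\le CR$ after a kernel comparison, which is valid for compactly supported non-degenerate kernels. This gives
\[
E_\delta(e)+\|e\|_{L^2}^2\le C\bigl(\|r_{\mathrm{in}}\|\,\|e\|+|\langle r_{\mathrm{bd}},e\rangle|\bigr)\le C\delta\|u\|_{H^3}\bigl(E_\delta(e)+\|e\|^2\bigr)^{1/2},
\]
and hence $E_\delta(e)+\|e\|^2\le C\delta^2\|u\|_{H^3}^2$.

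\textbf{Upgrade to $H^1$.} The nonlocal energy alone does not control $\|\nabla e\|$, so we use the equation itself. It gives
\[
e=\frac{\delta^2(r_\delta-\bar L e)+\int R_\delta e(\yy)\,\d\yy}{w_\delta(\xx)},\qquad w_\delta(\xx)=\int_\Omega R_\delta(\xx,\yy)\,\d\yy\ge c ,
\]
where $\bar L e$ denotes the $\bar R_\delta$ zeroth-order term. Differentiating this representation, each smoothed term has gradient bounded by $C\delta^{-1}$ times local differences, which yields $\|\nabla e\|\le C(E_\delta(e)^{1/2}+\|e\|+\delta\|\nabla r_\delta\|)$. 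The main obstacle is that $\nabla r_{\mathrm{bd}}$ scales like $\delta^{-1}$ in the layer. Here we would use the full $H^3$ regularity and the exact cancellation of the $O(1)$ boundary term (Neumann data matching), which should keep this contribution of order $\delta$ in $H^1$ after multiplication by $\delta^2$. If the cancellation proves insufficient, the fallback is to follow the Dirichlet argument of \cite{meng2023maximum}, with the boundary penalty term replaced by the Neumann flux term.
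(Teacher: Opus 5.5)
Your overall strategy is the paper's. You split the truncation error as $r=r_{in}+r_{bd}$, control $r_{bd}$ only in a dual sense, and test the error equation with $e$ to use coercivity. You then differentiate the representation $e=S_\delta e+\frac{\delta^2}{w_\delta}(r+\psi)$, where $\psi=-\int_\Omega\bar R_\delta(\cdot,\yy)e(\yy)\,\d\yy$ and $u=u_{\mathrm{loc},N}$.

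Your stability step is a legitimate and slightly cleaner variant. The bound $|\langle r_{bd},v\rangle|\le C\delta\|u\|_{H^3}(\|v\|_{L^2}+E_\delta(v)^{1/2})$ follows from $\|r_{bd}\|_{L^2}\le C\sqrt\delta\|u\|_{H^3}$ and your layer inequality by Cauchy--Schwarz alone; no moment cancellation is needed. It gives $\|e\|_{L^2}+E_\delta(e)^{1/2}\le C\delta\|u\|_{H^3}$ before $\nabla e$ enters. The paper instead bounds $(r_{bd},e)$ by $C\delta\|u\|_{H^3}\|e\|_{H^1}$ and closes the $L^2$ and gradient estimates together by Young's inequality.

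A side remark: pointwise $\bar R\le CR$ can fail under (i)--(iii), e.g.\ if $R$ vanishes on a subinterval of $(\tfrac12,1)$. What you need, and what holds, is the integrated comparison $\iint\bar R_\delta(v(\xx)-v(\yy))^2\le C\iint R_\delta(v(\xx)-v(\yy))^2$, or simply the cited coercivity.

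The gap is the upgrade to $H^1$, which is the actual content of the proposition and which you leave open.

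First, your inequality $\|\nabla e\|\le C(E_\delta(e)^{1/2}+\|e\|+\delta\|\nabla r_\delta\|)$ has the wrong weight. Differentiating the representation gives
\[
\|\nabla e\|_{L^2}\le\|\nabla S_\delta e\|_{L^2}+C\delta\|r+\psi\|_{L^2}+C\delta^2\|\nabla(r+\psi)\|_{L^2}.
\]
With only a factor $\delta$ in front of $\|\nabla r_{bd}\|_{L^2}\sim\delta^{-1/2}$, you would get just $O(\sqrt\delta)$.

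Second, and more substantively, you never supply the gradient consistency bounds $\|\nabla r_{in}\|_{L^2}\le C\|u\|_{H^3}$ and $\|\nabla r_{bd}\|_{L^2}\le C\delta^{-1/2}\|u\|_{H^3}$. The paper takes these from the standard analysis in \cite{shi2017convergence}. With them there is no boundary obstacle. Your own observation that $\nabla r_{bd}=O(\delta^{-1})$ on a layer of width $O(\delta)$ is exactly the second bound, so $\delta^2\|\nabla r_{bd}\|\le C\delta^{3/2}\|u\|_{H^3}$. Combining this with $\delta\|r\|\le C\delta^{3/2}\|u\|_{H^3}$, $\delta\|\psi\|+\delta^2\|\nabla\psi\|\le C\delta\|e\|$ and your energy bound gives $\|\nabla e\|\le C\delta\|u\|_{H^3}$. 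No Neumann-data cancellation or fallback is needed.

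The cancellation that is genuinely required is in the interior. The leading terms of $\delta^2\nabla r$ are $\int_\Omega\nabla_{\xx}R_\delta(\xx,\yy)(u(\xx)-u(\yy))\,\d\yy$ and $w_\delta(\xx)\nabla u(\xx)$. Each is $O(1)$, and they cancel only after a Taylor expansion of $u$ and an integration by parts in $\yy$. A crude estimate therefore gives merely $\delta^2\|\nabla r\|=O(1)$. Establishing a bound on $\nabla r_{in}$ is the missing ingredient; even $\|\nabla r_{in}\|_{L^2}\le C\delta^{-1}\|u\|_{H^3}$ would suffice.
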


\begin{remark}
\label{rem:neumann-regularity}
By standard elliptic regularity, if $f\in H^1(\Omega)$ and
$g\in H^{3/2}(\partial\Omega)$, then $u_{\mathrm{loc},N}\in H^3(\Omega)$
and
\[
\|u_{\mathrm{loc},N}\|_{H^3(\Omega)}
\le
C\!\left(\|f\|_{H^1(\Omega)}
+\|g\|_{H^{3/2}(\partial\Omega)}\right).
\]
Note that the Neumann rate $O(\delta)$ is one half-order better than the
Dirichlet rate $O(\sqrt\delta)$.
\end{remark}

\subsection{Tensor neural networks}
\label{subsec:tnn}

We briefly review the tensor neural network (TNN) architecture introduced in
\cite{wang2022tensor}. A TNN represents a multivariate function as a sum
of tensor products of one-dimensional subnetworks, so that high-dimensional
integrals reduce to products of one-dimensional integrals. Throughout this
paper, TNN refers to this previously introduced architecture; our contribution
is to use it as a building block in a variational solver for nonlocal diffusion
models and to analyze the resulting error. The ansatz takes
the form
\begin{equation}
\label{eq:tnn-structure}
\Psi(\xx;\Theta)
=
c \sum_{j=1}^p
\phi_{1,j}(x_1;\theta_1)
\phi_{2,j}(x_2;\theta_2)
\cdots
\phi_{d,j}(x_d;\theta_d)
=
c \sum_{j=1}^p\prod_{i=1}^d \phi_{i,j}(x_i;\theta_i).
\end{equation}
Here $c$ is a trainable scaling parameter. For each $i=1,\ldots,d$,
\[
\Phi_i(x_i;\theta_i)
:=
\bigl(\phi_{i,1}(x_i;\theta_i),\,\ldots,\,\phi_{i,p}(x_i;\theta_i)\bigr)
\]
is a fully connected neural network mapping $\R$ into $\R^{p}$ ,where the sine function is used as the activation function in the hidden layers . The parameter $\theta_i$ collects its trainable weights and biases. The integer $p$ is
called the \emph{separation rank} of the TNN, and
$\theta_1,\ldots,\theta_d$ denote distinct trainable parameter sets for the
one-dimensional subnetworks.
Figure~\ref{fig:tnn-structure} illustrates the overall architecture.

\begin{figure}[htbp]
\centering
\includegraphics[scale=0.5]{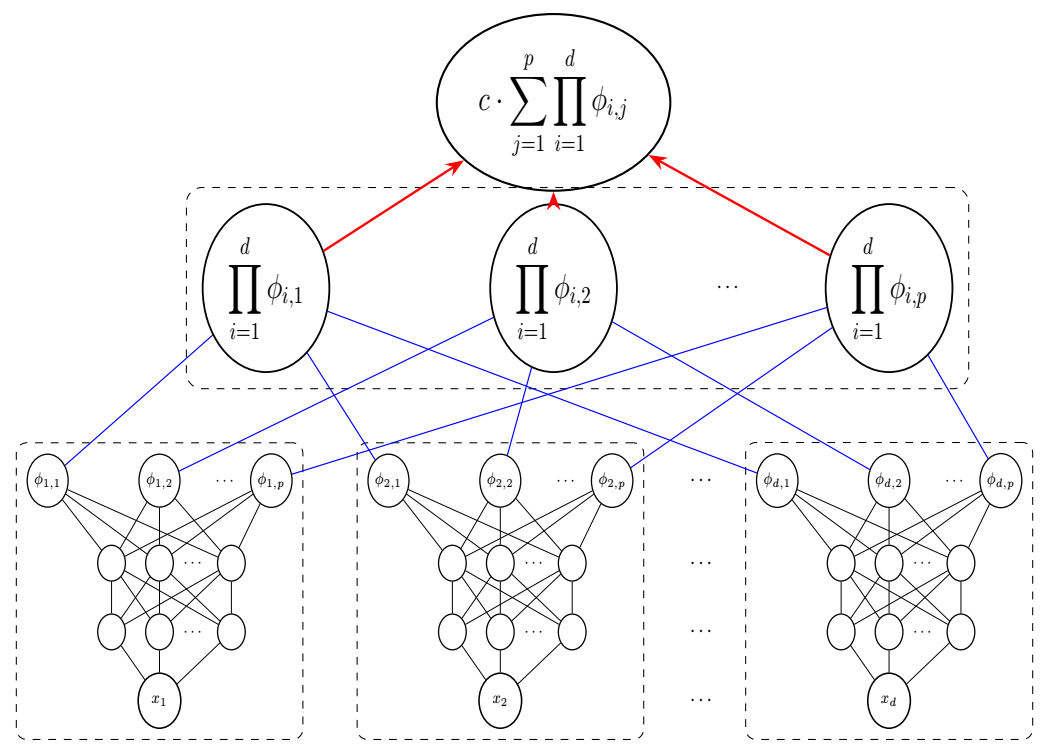}
\caption{Illustration of the tensor neural network architecture.}
\label{fig:tnn-structure}
\end{figure}

For simplicity, we restrict to the tensor-product domain
\[
\Omega = \Omega_1\times\cdots\times\Omega_d,
\qquad
\Omega_i=[a_i,b_i],\quad i=1,\ldots,d.
\]
Denote by $\mathcal{V}_p^d$ the class of functions representable in the form
\eqref{eq:tnn-structure} with separation rank at most $p$; lower-rank
representations are embedded by padding zero components. The space
$\mathcal{V}_p^d$ may be regarded as a finite-rank approximation in
$H^m(\Omega_1)\otimes\cdots\otimes H^m(\Omega_d)$,
or more generally in mixed Sobolev spaces.
The following approximation result was established in \cite{wang2022tensor}.

\begin{proposition}[Approximation property of TNNs]
\label{prop:tnn-approximation}
Let $m\ge 0$ and $f\in H^m(\Omega)$. Then for any $\varepsilon>0$, there exist
a positive integer $p$ and a TNN output $\Psi(\xx;\Theta)\in\mathcal{V}_p^d$
of the form \eqref{eq:tnn-structure} such that
\begin{equation}
\label{eq:tnn-approx}
\|f-\Psi(\cdot;\Theta)\|_{H^m(\Omega)}
<
\varepsilon .
\end{equation}
\end{proposition}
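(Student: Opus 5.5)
The plan is a density argument in three stages: reduce to smooth functions, approximate those by finite sums of products of univariate functions, and then realize each univariate factor by a one-dimensional sine-activated network. Since $\Omega=\Omega_1\times\cdots\times\Omega_d$ is a box, $C^\infty(\overline{\Omega})$ is dense in $H^m(\Omega)$. For instance, one can apply a Stein-type extension operator to obtain $Ef\in H^m(\RR^d)$ and then mollify. This lets us fix $F\in C^\infty(\overline\Omega)$ with $\|f-F\|_{H^m(\Omega)}<\varepsilon/3$. The target then becomes approximating a smooth $F$ in $H^m(\Omega)$ by an element of $\mathcal{V}_p^d$ for some $p$.

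For the second stage I would use a tensor-product basis. Rescale each $\Omega_i$ to a reference interval and use a polynomial basis in each variable, such as Legendre polynomials $L_k(x_i)$. A truncated multivariate expansion $F_N(\xx)=\sum_{|\mathbf{k}|_\infty\le N} a_{\mathbf k}\prod_{i=1}^d L_{k_i}(x_i)$ converges to $F$ in $H^m(\Omega)$ as $N\to\infty$, by standard spectral approximation estimates for smooth functions on a box. Alternatively, one can invoke the Weierstrass theorem applied simultaneously to $F$ and its derivatives up to order $m$, which gives polynomial approximation in $C^m(\overline\Omega)$. Either way, every multivariate polynomial is a finite sum of products of univariate polynomials. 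Hence, choosing $N$ with $\|F-F_N\|_{H^m}<\varepsilon/3$, we get $F_N=\sum_{j=1}^{p}\prod_{i=1}^d q_{i,j}(x_i)$ with $p=(N+1)^d$ and the constant $a_{\mathbf k}$ absorbed into one factor. This fixes the separation rank $p$.

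The third stage approximates each univariate factor $q_{i,j}$ on $[a_i,b_i]$ in $C^m([a_i,b_i])$ by a component of a one-hidden-layer sine network. The approximation has to hold in $C^m$, not merely in $C^0$, because the product rule distributes derivatives over the factors. I expect this to be the main obstacle: we need simultaneous approximation of derivatives by sine networks. I would argue it as follows. Extend $q_{i,j}$ to a smooth periodic function on a slightly larger interval. Its truncated Fourier series converges in $C^m$. Each term $\alpha\sin(\omega x+\beta)$ is exactly a hidden neuron followed by a linear output, so the truncated Fourier series is itself a sine network. Stacking the $p$ outputs into one network $\Phi_i:\R\to\R^p$ with a shared hidden layer is allowed by the architecture, and the scaling $c$ is set to $1$.

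It remains to control the product errors. Write $\phi_{i,j}=q_{i,j}+e_{i,j}$ with $\|e_{i,j}\|_{C^m}\le\eta$. A telescoping expansion combined with the Leibniz rule gives
\[
\Bigl\|\prod_i q_{i,j}-\prod_i\phi_{i,j}\Bigr\|_{C^m(\overline\Omega)}\le C(d,m,M)\,\eta ,
\]
where $M=\max_{i,j}\|q_{i,j}\|_{C^m}+1$. Summing over the $p$ terms and using $\|\cdot\|_{H^m(\Omega)}\le C|\Omega|^{1/2}\|\cdot\|_{C^m}$, we can choose $\eta$ small enough, depending on the now-fixed $p$, to make this contribution smaller than $\varepsilon/3$. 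The triangle inequality over the three stages then yields \eqref{eq:tnn-approx}. The order of choices matters: first $F$, then $N$ (hence $p$), and only then $\eta$.
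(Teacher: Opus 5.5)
Your proof is correct. The paper gives no argument of its own for this proposition and simply quotes it from \cite{wang2022tensor}. Your proof has the same three-part skeleton as the usual TNN argument: approximate $f$ in $H^m(\Omega)$ by a finite sum of separable products, approximate each univariate factor by a one-dimensional network, and control the product error by telescoping.

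The difference is in how the first two steps are supplied. The standard route invokes general results, namely density of finite-rank separable sums in $H^m(\Omega)$ and a universal approximation theorem for smooth-activation networks in Sobolev norms. You prove both concretely. Passing through $C^\infty(\overline{\Omega})$ and tensor-product polynomials establishes the separable density directly in $H^m(\Omega)$, with no appeal to tensor-product or mixed-Sobolev structure. The truncated Fourier series of a periodized smooth extension writes down the sine network explicitly, with one hidden layer shared across the $p$ outputs, and this shows why the sine activation in particular works.

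Your use of $C^m$ bounds on the factors is only a convenience. Since $\|\partial^\alpha\prod_i g_i(x_i)\|_{L^2(\Omega)}=\prod_i\|g_i^{(\alpha_i)}\|_{L^2(\Omega_i)}$ on a box, the same telescoping estimate can be run directly in $H^m$ norms.

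It is worth stating explicitly that your networks have a single hidden layer. This is admissible because $\mathcal{V}_p^d$, as defined in the paper, does not fix the depth. If a fixed depth were imposed, as in the experiments, you would need an extra approximate-identity step, for example inserting $z\mapsto\epsilon^{-1}\sin(\epsilon z)$ with $\epsilon\to 0$.
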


\begin{remark}
Proposition~\ref{prop:tnn-approximation} states that
$\bigcup_{p\ge 1}\mathcal{V}_p^d$ is dense in $H^m(\Omega)$, but does not
provide an explicit relationship between the rank $p$ and the tolerance
$\varepsilon$. Quantitative rates are available in mixed Sobolev spaces
$H^{t,l}_{\mathrm{mix}}(\Omega)$; see \cite{griebel2000optimized}. In the
present work the density statement suffices.
\end{remark}

With the TNN structure \eqref{eq:tnn-structure}, integration over $\Omega$
reduces to products of one-dimensional integrals:
\begin{align}
\int_\Omega \Psi(\xx;\Theta)\,\d\xx
&=
c \sum_{j=1}^p
\prod_{i=1}^d
\int_{\Omega_i}
\phi_{i,j}(x_i;\theta_i)\,\d x_i .
\label{eq:tnn-integral}
\end{align}
More generally, the pointwise product of two TNN functions has a separable
tensor-product representation with rank equal to the product of the individual
ranks.
Given two TNN functions
\[
\Psi(\xx;\Theta)
=
c_\Psi\sum_{j=1}^{p_\Psi}\prod_{i=1}^d \phi_{i,j}(x_i;\theta_i),
\qquad
V(\xx;\Xi)
=
c_V\sum_{k=1}^{p_V}\prod_{i=1}^d \psi_{i,k}(x_i;\xi_i),
\]
their integral over $\Omega$ satisfies
\begin{equation}
\int_\Omega
\Psi(\xx;\Theta)\,V(\xx;\Xi)\,\d\xx
=
c_\Psi c_V
\sum_{j=1}^{p_\Psi}
\sum_{k=1}^{p_V}
\prod_{i=1}^d
\int_{\Omega_i}
\phi_{i,j}(x_i;\theta_i)\,
\psi_{i,k}(x_i;\xi_i)\,\d x_i .
\label{eq:tnn-l2}
\end{equation}
In particular, \eqref{eq:tnn-l2} shows that any integral of a TNN function
against a tensor-product weight reduces to a collection of one-dimensional
integrals, which is the key property exploited in the error analysis below.

\subsection{Notation and basic facts}
\label{subsec:notation}

We collect here some notation and basic estimates used throughout the paper.
For $u\in L^2(\Omega)$, define the smoothing operator
\begin{equation}
\label{eq:mollifier}
S_\delta u(\xx)
=
\frac{1}{w_\delta(\xx)}
\int_\Omega R_\delta(\xx,\yy)u(\yy)\,\d\yy,
\qquad
w_\delta(\xx):=\int_\Omega R_\delta(\xx,\yy)\,\d\yy .
\end{equation}
For kernels satisfying the compact-support assumptions of
Section~\ref{subsec:nonlocal-dirichlet}, and also for the Gaussian kernel used
in Section~\ref{sec:main-result}, the following estimates hold for all
$\xx\in\overline{\Omega}$ and all sufficiently small $\delta>0$:
\begin{align}
&C_1 \le \int_\Omega R_\delta(\xx,\yy)\,\d\yy \le C_2,
\label{eq:kernel-est-1}\\
&C_3 \le \int_\Omega \bar{R}_\delta(\xx,\yy)\,\d\yy \le C_4,
\label{eq:kernel-est-2}\\
&\int_{\partial\Omega} \bar{R}_\delta(\xx,\yy)\,\d S_\yy
\le \frac{C_5}{\delta},
\label{eq:kernel-est-3}\\
&\int_{\partial\Omega} R_\delta(\xx,\yy)\,\d S_\yy
\le \frac{C_6}{\delta}.
\label{eq:kernel-est-4}
\end{align}
The constants $C_1,\dots,C_6$ depend only on $\Omega$ and the kernel
parameters, and are independent of $\delta$. For the Gaussian kernel adopted
later, these estimates follow from exponential decay and finite moments. As an
immediate consequence of \eqref{eq:kernel-est-1} and the scaled moment bounds
of the kernel, the weight $w_\delta$ satisfies the gradient bound
\begin{equation}
\label{eq:wdelta-grad}
|\nabla w_\delta(\xx)|
\le \frac{C}{\delta},
\qquad \xx\in\overline{\Omega} ,
\end{equation}
which follows by differentiating $w_\delta(\xx)=\int_\Omega
R_\delta(\xx,\yy)\,\d\yy$ under the integral sign and using
$\int_\Omega |\nabla_{\!\xx}R_\delta(\xx,\yy)|\,\d\yy\le C/\delta$. In
particular, the lower bound in \eqref{eq:kernel-est-1} gives
$1/w_\delta(\xx)\le C$ uniformly in $\xx\in\overline{\Omega}$ and small
$\delta>0$; in subsequent estimates we absorb such factors of
$1/w_\delta$ into the generic constant $C$ without further comment.

\section{Workflow and the main result}
\label{sec:main-result}

\subsection{Dirichlet boundary condition}
\label{subsec:dirichlet-workflow}

We use the TNN ansatz reviewed in Section~\ref{subsec:tnn} as the trial class
for the nonlocal problems \eqref{eq:dirichlet-nonlocal} and
\eqref{eq:neumann-nonlocal}. To exploit the separation-of-variables structure
of TNNs, we adopt the following two assumptions throughout this and the next
section:
\begin{itemize}
\item[(a)] (\emph{Separable Gaussian kernel}) $R(r)=e^{-s^2 r}$ for
$r\in[0,+\infty)$, where $s>0$ is a fixed parameter.
\item[(b)] (\emph{Rectangularly partitionable domain}) $\Omega = \bigcup_\alpha
T_\alpha$, where the union is finite, the interiors of the rectangles are
disjoint, and each
\[
T_\alpha = [a_1^\alpha, b_1^\alpha]\times\cdots\times[a_d^\alpha, b_d^\alpha]
\]
is a $d$-dimensional tensor-product rectangle.
\end{itemize}
This Gaussian kernel is not compactly supported, but the analysis below only
uses the standard mass, moment, boundary and coercivity/consistency estimates
summarized in Section~\ref{sec:preliminaries}; these estimates remain valid in
the Gaussian case. We choose the Gaussian kernel mainly because its separability
is exact. For this choice, the normalization constant $\alpha_d$ is understood
to be chosen by the same condition as in \eqref{eq:kernel-def}, with the radial
integral taken over $[0,+\infty)$. Under (a), the rescaled kernels become
\begin{align}
R_\delta(\xx,\yy)
&= \alpha_d\,\delta^{-d}
\exp\!\left(-\frac{s^2}{4\delta^2}\sum_{i=1}^d (x_i-y_i)^2\right),
\notag \\
\bar R_\delta(\xx,\yy)
&= \frac{\alpha_d}{s^2}\,\delta^{-d}
\exp\!\left(-\frac{s^2}{4\delta^2}\sum_{i=1}^d (x_i-y_i)^2\right) ,
\label{eq:gaussian-kernel}
\end{align}
so that $\bar R_\delta(\xx,\yy) = s^{-2}R_\delta(\xx,\yy)$. The
decisive feature of the Gaussian kernel is the factorization
\begin{equation}
\label{eq:kernel-factorization}
\exp\!\left(-\frac{s^2}{4\delta^2}\sum_{i=1}^d (x_i-y_i)^2\right)
=
\prod_{i=1}^d
\exp\!\left(-\frac{s^2}{4\delta^2}(x_i-y_i)^2\right) ,
\end{equation}
which, when combined with the TNN ansatz, allows every loss functional
encountered below to be evaluated as a finite sum of products of
low-dimensional integrals; the bulk terms reduce to products of
two-dimensional integrals.

We illustrate the workflow on the Dirichlet problem; the Neumann case is
analogous. For brevity we further assume that $\Omega$ is a single
rectangle,
\[
\Omega = [a_1,b_1]\times\cdots\times[a_d,b_d] ;
\]
the extension to a finite union of such rectangles is straightforward.

A direct calculation shows that, for $u\in H^1(\Omega)$, the energy
\begin{align}
\mathcal{L}_\delta(u)
&:= \frac{1}{4\delta^2}
\!\int_\Omega\!\!\int_\Omega\!
R_\delta(\xx,\yy)\bigl(u(\xx)-u(\yy)\bigr)^2\,\d\xx\,\d\yy
+ \frac{1}{\delta}
\!\int_\Omega\!\!\int_{\partial\Omega}\!
\bar R_\delta(\xx,\yy)\,u(\xx)^2\,\d\xx\,\d S_\yy
\notag \\
&\quad
- \!\int_\Omega\!\!\int_\Omega\!
\bar R_\delta(\xx,\yy)\,u(\xx)f(\yy)\,\d\xx\,\d\yy
- \frac{2}{\delta}
\!\int_\Omega\!\!\int_{\partial\Omega}\!
\bar R_\delta(\xx,\yy)\,u(\xx)g(\yy)\,\d\xx\,\d S_\yy
\label{eq:dirichlet-loss}
\end{align}
is well defined (the boundary term making sense by the standard trace
theorem on $H^1(\Omega)$), is strictly convex in $u$, and admits a unique
minimizer in $H^1(\Omega)$. The first variation of
\eqref{eq:dirichlet-loss} recovers \eqref{eq:dirichlet-nonlocal} after
symmetrization in $\xx,\yy$, so minimizing $\mathcal{L}_\delta$ over
$H^1(\Omega)$ is equivalent to solving the nonlocal Dirichlet problem
\eqref{eq:dirichlet-nonlocal}. The numerical scheme considered here replaces the trial space
$H^1(\Omega)$ by the TNN class $\mathcal{V}_p^d$ and minimizes $\mathcal{L}_\delta$ over
$\mathcal{V}_p^d$.

For general $f$ and $g$, however, the integrals in
\eqref{eq:dirichlet-loss} cannot be separated into products of
low-dimensional integrals in the way needed here, because the data are not
separable in their arguments. To overcome this, we
follow \cite{li2024tensor} and adopt a preconditioning step: first
approximate $f$ and $g$ in $L^2$ by TNN surrogates $\tilde f$ and
$\tilde g$, and then minimize a modified loss in which $f$, $g$ are
replaced by $\tilde f$, $\tilde g$. The construction of $\tilde f$ is
straightforward, but a remark on $\tilde g$ is in order. The boundary
$\partial\Omega$ of the rectangle $\Omega = [a_1,b_1]\times\cdots\times[a_d,b_d]$
decomposes into $2d$ faces,
\[
\partial\Omega
= \bigcup_{i=1}^{d}\bigcup_{s\in\{a_i,b_i\}} F_{i,s},
\qquad
F_{i,s} := \{\xx\in\overline{\Omega} : x_i = s\} ,
\]
each of which is itself a $(d-1)$-dimensional tensor-product rectangle.
Accordingly, $\tilde g$ is constructed face-by-face: on each face
$F_{i,s}$ we use a $(d-1)$-variable TNN; the global surrogate $\tilde g$
is the union of these $2d$ pieces. For brevity we suppress the face index
in the formulas below and write $\tilde g$ as a single TNN with $d-1$
subnetworks, with the understanding that this representation is to be
applied face by face.

We obtain $\tilde f$ and $\tilde g$ by minimizing the empirical $L^2$
losses
\begin{align}
\mathcal{L}_f(\tilde\theta_f)
&:= \frac{1}{N_f}\sum_{i=1}^{N_f}
\bigl|f(\xx_i)-\tilde f(\xx_i;\tilde\theta_f)\bigr|^2 ,
\label{eq:dirichlet-loss-f}
\\
\mathcal{L}_g(\tilde\theta_g)
&:= \frac{1}{N_g}\sum_{j=1}^{N_g}
\bigl|g(\yy_j)-\tilde g(\yy_j;\tilde\theta_g)\bigr|^2 ,
\label{eq:dirichlet-loss-g}
\end{align}
where $\{\xx_i\}_{i=1}^{N_f}\subset\Omega$ and
$\{\yy_j\}_{j=1}^{N_g}\subset\partial\Omega$ are i.i.d.\ Monte Carlo
samples. Upon convergence, we obtain the TNN representations
\begin{align*}
\tilde f(\xx;\tilde\theta_f)
&= c_f\sum_{j=1}^{p_f}\prod_{i=1}^d
\phi^{f}_{i,j}(x_i;\tilde\theta^{f}_i),
\\
\tilde g(\yy;\tilde\theta_g)
&= c_g\sum_{j=1}^{p_g}\prod_{i=1}^{d-1}
\phi^{g}_{i,j}(y_i;\tilde\theta^{g}_i) ,
\end{align*}
satisfying the prescribed tolerances
\begin{equation}
\label{eq:dirichlet-precondition}
\|f-\tilde f\|_{L^2(\Omega)}<\varepsilon_f,
\qquad
\|g-\tilde g\|_{L^2(\partial\Omega)}<\varepsilon_g .
\end{equation}
The achievability of \eqref{eq:dirichlet-precondition} for any prescribed
$\varepsilon_f,\varepsilon_g>0$ follows from
Proposition~\ref{prop:tnn-approximation}.

With $\tilde f$ and $\tilde g$ at hand, the loss function actually
minimized is
\begin{align}
\tilde{\mathcal{L}}_\delta(u)
&:= \frac{1}{4\delta^2}
\!\int_\Omega\!\!\int_\Omega\!
R_\delta(\xx,\yy)\bigl(u(\xx)-u(\yy)\bigr)^2\,\d\xx\,\d\yy
+ \frac{1}{\delta}
\!\int_\Omega\!\!\int_{\partial\Omega}\!
\bar R_\delta(\xx,\yy)\,u(\xx)^2\,\d\xx\,\d S_\yy
\notag \\
&\quad
-\!\int_\Omega\!\!\int_\Omega\!
\bar R_\delta(\xx,\yy)\,u(\xx)\tilde f(\yy)\,\d\xx\,\d\yy
- \frac{2}{\delta}
\!\int_\Omega\!\!\int_{\partial\Omega}\!
\bar R_\delta(\xx,\yy)\,u(\xx)\tilde g(\yy)\,\d\xx\,\d S_\yy .
\label{eq:dirichlet-actual-loss}
\end{align}
Expanding $(u(\xx)-u(\yy))^2$ and using the symmetry
$R_\delta(\xx,\yy)=R_\delta(\yy,\xx)$, the quadratic part splits as
\begin{align*}
&\frac{1}{4\delta^2}
\!\int_\Omega\!\!\int_\Omega\!
R_\delta(\xx,\yy)\bigl(u(\xx)-u(\yy)\bigr)^2\,\d\xx\,\d\yy
\\
&\qquad =
\frac{1}{2\delta^2}\!\int_\Omega\!\!\int_\Omega\!
R_\delta(\xx,\yy)\,u(\xx)^2\,\d\xx\,\d\yy
-
\frac{1}{2\delta^2}\!\int_\Omega\!\!\int_\Omega\!
R_\delta(\xx,\yy)\,u(\xx)u(\yy)\,\d\xx\,\d\yy .
\end{align*}
and consequently $\tilde{\mathcal{L}}_\delta$ admits the decomposition
\begin{equation}
\label{eq:dirichlet-loss-decomp}
\tilde{\mathcal{L}}_\delta(u)
= \tilde{\mathcal{L}}_{\delta,1}(u)
+ \tilde{\mathcal{L}}_{\delta,2}(u)
+ \tilde{\mathcal{L}}_{\delta,3}(u)
+ \tilde{\mathcal{L}}_{\delta,4}(u)
+ \tilde{\mathcal{L}}_{\delta,5}(u),
\end{equation}
where
\begin{align*}
\tilde{\mathcal{L}}_{\delta,1}(u)
&= \frac{1}{2\delta^2}
\!\int_\Omega\!\!\int_\Omega\!
R_\delta(\xx,\yy)\,u(\xx)^2\,\d\xx\,\d\yy,
\\
\tilde{\mathcal{L}}_{\delta,2}(u)
&= -\frac{1}{2\delta^2}
\!\int_\Omega\!\!\int_\Omega\!
R_\delta(\xx,\yy)\,u(\xx)u(\yy)\,\d\xx\,\d\yy,
\\
\tilde{\mathcal{L}}_{\delta,3}(u)
&= \frac{1}{\delta}
\!\int_\Omega\!\!\int_{\partial\Omega}\!
\bar R_\delta(\xx,\yy)\,u(\xx)^2\,\d\xx\,\d S_\yy,
\\
\tilde{\mathcal{L}}_{\delta,4}(u)
&= -\!\int_\Omega\!\!\int_\Omega\!
\bar R_\delta(\xx,\yy)\,u(\xx)\tilde f(\yy)\,\d\xx\,\d\yy,
\\
\tilde{\mathcal{L}}_{\delta,5}(u)
&= -\frac{2}{\delta}
\!\int_\Omega\!\!\int_{\partial\Omega}\!
\bar R_\delta(\xx,\yy)\,u(\xx)\tilde g(\yy)\,\d\xx\,\d S_\yy.
\end{align*}

We minimize $\tilde{\mathcal{L}}_\delta$ over $\mathcal{V}_p^d$. Suppose
that the current trial function takes the TNN form
\begin{equation}
\label{eq:tnn-trial}
u(\xx;\Theta)
= c \sum_{j=1}^{p_u}\prod_{i=1}^d
\phi_{i,j}(x_i;\theta_i) .
\end{equation}
Combining \eqref{eq:gaussian-kernel} and
\eqref{eq:kernel-factorization} with \eqref{eq:tnn-trial}, every term in
\eqref{eq:dirichlet-loss-decomp} reduces to a finite sum of two-dimensional
integrals. We illustrate this on $\tilde{\mathcal{L}}_{\delta,1}$.
Squaring \eqref{eq:tnn-trial} gives
\[
u(\xx)^2
= c^2\sum_{j_1,j_2=1}^{p_u}\prod_{i=1}^d
\phi_{i,j_1}(x_i;\theta_i)\,\phi_{i,j_2}(x_i;\theta_i),
\]
and substituting into $\tilde{\mathcal{L}}_{\delta,1}$ yields
\begin{align}
\tilde{\mathcal{L}}_{\delta,1}(u)
&=
\frac{c^2 \alpha_d}{2\,\delta^{d+2}}
\sum_{j_1,j_2=1}^{p_u}
\prod_{i=1}^d
\!\int_{a_i}^{b_i}\!\!\int_{a_i}^{b_i}\!
\exp\!\left(-\frac{s^2}{4\delta^2}(x_i-y_i)^2\right)
\notag \\
&\hspace{4cm}\times\,
\phi_{i,j_1}(x_i;\theta_i)\,\phi_{i,j_2}(x_i;\theta_i)
\,\d x_i\,\d y_i .
\label{eq:L1-factorized}
\end{align}
The original $2d$-dimensional integral has thus been reduced to a finite
sum of products of two-dimensional integrals, which we evaluate by composite
Gauss--Legendre quadrature. The remaining terms
$\tilde{\mathcal{L}}_{\delta,k}$, $k=2,3,4,5$, admit analogous
factorizations. The number of subintervals and quadrature points used in
the two-dimensional rules grows as $\delta\to 0$, since the Gaussian factor
$\exp(-s^2(x_i-y_i)^2/(4\delta^2))$ becomes increasingly localized near the
diagonal $x_i=y_i$; the precise choice of these quadrature parameters is
specified in Section~\ref{sec:numerical-tests}.

Assume that the exact minimizer
$\tilde u_{\delta,p}\in\mathcal{V}_p^d$ of
$\tilde{\mathcal{L}}_\delta$ over $\mathcal{V}_p^d$ exists, and let
$u_{\delta,p}\in\mathcal{V}_p^d$ be the actual TNN output produced by the
optimization algorithm. We measure the gap between the two by the
\emph{optimization error}
\begin{equation}
\label{eq:opt-error-dirichlet}
\eta_{\mathrm{opt}}^{\,2}
:= \tilde{\mathcal{L}}_\delta(u_{\delta,p})
- \tilde{\mathcal{L}}_\delta(\tilde u_{\delta,p}) \ge 0 .
\end{equation}
Our goal is to estimate $\|u_\delta-u_{\delta,p}\|_{L^2(\Omega)}$ and
$\|u_{\mathrm{loc}}-u_{\delta,p}\|_{L^2(\Omega)}$, where $u_\delta$ and
$u_{\mathrm{loc}}$ are the solutions of \eqref{eq:dirichlet-nonlocal} and
\eqref{eq:dirichlet-local}, respectively. The main difficulty is to relate
the energy minimization to the approximation property of TNNs in
$H^1(\Omega)$. Our first main result is the following.

\begin{theorem}[Error estimate, Dirichlet case]
\label{thm:dirichlet-error}
Let $0<\delta\le\delta_0\le 1$, where $\delta_0$ is fixed, and let
$\varepsilon_f,\varepsilon_g,\varepsilon_u>0$ be prescribed tolerances.
Assume that the Dirichlet data satisfy the regularity condition stated in
Section~\ref{subsec:nonlocal-dirichlet}, namely
\begin{equation}
\label{eq:dirichlet-data-regularity-main}
f\in H^1(\Omega),
\qquad
g\in H^{5/2}(\partial\Omega).
\end{equation}
Then, by the elliptic regularity statement recalled in
Remark~\ref{rem:dirichlet-regularity}, the local solution satisfies
$u_{\mathrm{loc}}\in H^3(\Omega)$ and
\[
\|u_{\mathrm{loc}}\|_{H^3(\Omega)}
\le
C\!\left(\|f\|_{H^1(\Omega)}
+\|g\|_{H^{5/2}(\partial\Omega)}\right).
\]
Assume that there exist $\tilde f\in\mathcal{V}_{p_f}^d$,
$\tilde g\in\mathcal{V}_{p_g}^{d-1}$ and
$\bar u_\delta\in\mathcal{V}_{p_u}^d$ for some
$p_f,p_g,p_u\in\mathbb{N}^{*}$ such that
\begin{align}
\|f-\tilde f\|_{L^2(\Omega)}
&\le \varepsilon_f,
&
\|g-\tilde g\|_{L^2(\partial\Omega)}
&\le \varepsilon_g,
&
\|u_\delta-\bar u_\delta\|_{H^1(\Omega)}
&\le \varepsilon_u .
\label{eq:dirichlet-tolerances}
\end{align}
Let $p\ge p_u$. Assume that $\tilde{\mathcal L}_\delta$ admits a minimizer
$\tilde u_{\delta,p}$ over $\mathcal{V}_p^d$, and let
$u_{\delta,p}\in\mathcal{V}_p^d$ be the TNN output, with optimization error
$\eta_{\mathrm{opt}}$ defined by \eqref{eq:opt-error-dirichlet}. Then
there exists a constant $C>0$, independent of $\delta$, such that
\begin{equation}
\label{eq:dirichlet-estimate-1}
\|u_\delta-u_{\delta,p}\|_{L^2(\Omega)}
\le
C\!\left(
\frac{\varepsilon_f}{\sqrt{\delta}}
+\frac{\varepsilon_g}{\delta}
+\frac{\varepsilon_u}{\sqrt{\delta}}
+\eta_{\mathrm{opt}}\right) .
\end{equation}
Moreover,
\begin{equation}
\label{eq:dirichlet-estimate-2}
\|u_{\mathrm{loc}}-u_{\delta,p}\|_{L^2(\Omega)}
\le
C\!\left(
\frac{\varepsilon_f}{\sqrt{\delta}}
+\frac{\varepsilon_g}{\delta}
+\frac{\varepsilon_u}{\sqrt{\delta}}
+\eta_{\mathrm{opt}}\right)
+
C\sqrt{\delta}\!\left(\|f\|_{H^1(\Omega)}
+\|g\|_{H^{5/2}(\partial\Omega)}\right) .
\end{equation}
\end{theorem}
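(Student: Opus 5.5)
The plan is the standard energy-norm (Céa-type) argument for quadratic minimization. Write the nonlocal bilinear form
\[
B_\delta(u,v)=\frac{1}{2\delta^2}\int_\Omega\int_\Omega R_\delta(\xx,\yy)(u(\xx)-u(\yy))(v(\xx)-v(\yy))\,\d\xx\,\d\yy+\frac{2}{\delta}\int_\Omega\int_{\partial\Omega}\bar R_\delta(\xx,\yy)u(\xx)v(\xx)\,\d\xx\,\d S_\yy ,
\]
so that $\mathcal L_\delta(u)=\tfrac12 B_\delta(u,u)-\ell(u)$ and $\tilde{\mathcal L}_\delta(u)=\tfrac12B_\delta(u,u)-\tilde\ell(u)$, with linear functionals $\ell,\tilde\ell$ built from $(f,g)$ and $(\tilde f,\tilde g)$. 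Define $\|v\|_B^2:=B_\delta(v,v)$.

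Two standard facts about this norm will be used. First, the coercivity underlying Proposition~\ref{prop:dirichlet-wellposedness} gives $\|v\|_{L^2(\Omega)}\le C\|v\|_B$ uniformly in $\delta$ (nonlocal Poincaré with the boundary penalty). Second, an upper bound $\|v\|_B^2\le C\bigl(\|\nabla v\|_{L^2}^2+\delta^{-1}\|v\|_{L^2(\Omega_{C\delta})}^2\bigr)\le C\delta^{-1}\|v\|_{H^1(\Omega)}^2$ follows from the moment bounds and \eqref{eq:kernel-est-3}, using a trace-type estimate on the $\delta$-layer. Third, let $\tilde u_\delta\in H^1$ be the exact solution with data $(\tilde f,\tilde g)$, i.e.\ the minimizer of $\tilde{\mathcal L}_\delta$ over $H^1$.

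Step 1 is data perturbation. The difference $u_\delta-\tilde u_\delta$ solves the problem with data $(f-\tilde f,g-\tilde g)$. Testing with itself gives $\|u_\delta-\tilde u_\delta\|_B^2=(\ell-\tilde\ell)(u_\delta-\tilde u_\delta)$. The bulk term is bounded by $C\varepsilon_f\|v\|_{L^2}\le C\varepsilon_f\|v\|_B$ via \eqref{eq:kernel-est-2}. The boundary term is $\frac{2}{\delta}\int\!\int \bar R_\delta v(\xx)(g-\tilde g)(\yy)$; by Cauchy--Schwarz and \eqref{eq:kernel-est-3} it is at most $\frac{C}{\delta}\|g-\tilde g\|_{L^2(\partial\Omega)}\bigl(\int_\Omega\int_{\partial\Omega}\bar R_\delta v(\xx)^2\bigr)^{1/2}$, i.e.\ $\le C\delta^{-1/2}\varepsilon_g\|v\|_B$. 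This yields $\|u_\delta-\tilde u_\delta\|_B\le C(\varepsilon_f+\delta^{-1/2}\varepsilon_g)$. Since the target has the weaker factors $\varepsilon_f/\sqrt\delta$ and $\varepsilon_g/\delta$, there is slack here, which is convenient if a cruder estimate of the boundary term is needed.

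Step 2 is the Galerkin/best-approximation step. For any $w\in\mathcal V_p^d$ we have the identity $\tilde{\mathcal L}_\delta(w)-\tilde{\mathcal L}_\delta(\tilde u_\delta)=\tfrac12\|w-\tilde u_\delta\|_B^2$. Taking $w=\tilde u_{\delta,p}$ and comparing with $w=\bar u_\delta$ (admissible since $p\ge p_u$ and the classes are nested) gives $\|\tilde u_{\delta,p}-\tilde u_\delta\|_B\le\|\bar u_\delta-\tilde u_\delta\|_B\le \|\bar u_\delta-u_\delta\|_B+\|u_\delta-\tilde u_\delta\|_B$. The first term is $\le C\delta^{-1/2}\varepsilon_u$ by the upper bound above. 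Taking $w=u_{\delta,p}$ and using \eqref{eq:opt-error-dirichlet} gives $\tfrac12\|u_{\delta,p}-\tilde u_\delta\|_B^2=\eta_{\mathrm{opt}}^2+\tfrac12\|\tilde u_{\delta,p}-\tilde u_\delta\|_B^2$. Hence
\[
\|u_{\delta,p}-\tilde u_\delta\|_B\le C\bigl(\eta_{\mathrm{opt}}+\delta^{-1/2}\varepsilon_u+\varepsilon_f+\delta^{-1/2}\varepsilon_g\bigr).
\]
Combining with Step 1 by the triangle inequality and applying $\|\cdot\|_{L^2}\le C\|\cdot\|_B$ proves \eqref{eq:dirichlet-estimate-1}, using $\delta\le1$ to weaken the factors to the stated ones. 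Estimate \eqref{eq:dirichlet-estimate-2} then follows from the triangle inequality, Proposition~\ref{prop:dirichlet-locallimit}, and Remark~\ref{rem:dirichlet-regularity}.

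\textbf{Main obstacle.} The delicate point is the uniform-in-$\delta$ coercivity $\|v\|_{L^2}\le C\|v\|_B$ together with the upper bound $\|v\|_B\le C\delta^{-1/2}\|v\|_{H^1}$, in the Gaussian (non-compactly supported) setting. I would take the coercivity from the well-posedness analysis behind Proposition~\ref{prop:dirichlet-wellposedness}, as the paper asserts these estimates persist for the Gaussian kernel. For the boundary-penalty part of the upper bound I would use $\int_{\partial\Omega}\bar R_\delta(\xx,\yy)\,\d S_\yy\lesssim\delta^{-1}e^{-c\,\mathrm{dist}(\xx,\partial\Omega)^2/\delta^2}$ together with the layer estimate $\|v\|^2_{L^2(\Omega_{t})}\le Ct\|v\|_{H^1}^2$, integrated over layers.
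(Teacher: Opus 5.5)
Your argument is correct. Its skeleton matches the paper's: the quadratic expansion of $\tilde{\mathcal L}_\delta$ about the $H^1$-minimizer $\tilde u_\delta$ with surrogate data, the comparison with $\bar u_\delta\in\mathcal V_p^d$, and the uniform $L^2$ coercivity of $B_\delta(v,v)=E_\delta(v)$.

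The genuine difference is in Step 1. The paper bounds the data perturbation in $H^1$ through the stability estimate of Proposition~\ref{prop:dirichlet-wellposedness}, namely $\|u_\delta-\tilde u_\delta\|_{H^1(\Omega)}\le C(\varepsilon_f+\varepsilon_g/\sqrt\delta)$. It then passes $\bar u_\delta-\tilde u_\delta$ through the lossy upper bound $E_\delta(v)\le C\delta^{-1}\|v\|_{H^1(\Omega)}^2$. That conversion is exactly where the extra factors in $\varepsilon_f/\sqrt\delta$ and $\varepsilon_g/\delta$ come from.

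You instead test the perturbation equation with $u_\delta-\tilde u_\delta$ and bound the data functional directly by the energy norm. The penalty part $\frac{2}{\delta}\int_\Omega\int_{\partial\Omega}\bar R_\delta(\xx,\yy)v(\xx)^2\,\d\xx\,\d S_\yy$ of $B_\delta$ absorbs the $\frac{2}{\delta}$ boundary pairing. You use the $H^1\to B_\delta$ upper bound only on $\bar u_\delta-u_\delta$.

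What your route buys:
\begin{itemize}
\item It does not use the $H^1$ stability estimate; it needs only $L^2$ coercivity, the kernel mass bounds, and the energy upper bound.
\item It proves the strictly sharper estimate $\|u_\delta-u_{\delta,p}\|_{L^2(\Omega)}\le C(\varepsilon_f+\varepsilon_g/\sqrt\delta+\varepsilon_u/\sqrt\delta+\eta_{\mathrm{opt}})$. Its data dependence coincides with the Neumann bound \eqref{eq:neumann-estimate-1}.
\item The stated \eqref{eq:dirichlet-estimate-1} follows because $\delta\le 1$.
\end{itemize}
The paper's route buys only economy of presentation: it reuses an available stability result and mirrors the Neumann proof. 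Your handling of $\eta_{\mathrm{opt}}$, via $\tfrac12\|u_{\delta,p}-\tilde u_\delta\|_B^2=\eta_{\mathrm{opt}}^2+\tfrac12\|\tilde u_{\delta,p}-\tilde u_\delta\|_B^2$, is equivalent to the paper's.

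One small correction. In your boundary pairing, the factor $\|g-\tilde g\|_{L^2(\partial\Omega)}$ comes from the mass bound $\sup_{\yy\in\partial\Omega}\int_\Omega\bar R_\delta(\xx,\yy)\,\d\xx\le C$, not from \eqref{eq:kernel-est-3}. The resulting bound $C\delta^{-1/2}\varepsilon_g\|v\|_B$ is nonetheless correct.
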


\subsection{Neumann boundary condition}
\label{subsec:neumann-workflow}

The Neumann case is treated analogously, but yields sharper estimates. As
the variational counterpart of the nonlocal Neumann problem
\eqref{eq:neumann-nonlocal}, we minimize over $H^1(\Omega)$ the energy
\begin{align}
\mathcal{L}_{\delta,N}(u)
&:= \frac{1}{4\delta^2}
\!\int_\Omega\!\!\int_\Omega\!
R_\delta(\xx,\yy)\bigl(u(\xx)-u(\yy)\bigr)^2\,\d\xx\,\d\yy
+ \frac{1}{2}
\!\int_\Omega\!\!\int_\Omega\!
\bar R_\delta(\xx,\yy)\,u(\xx)u(\yy)\,\d\xx\,\d\yy
\notag \\
&\quad
-\!\int_\Omega\!\!\int_\Omega\!
\bar R_\delta(\xx,\yy)\,u(\xx)f(\yy)\,\d\xx\,\d\yy
- 2\!\int_\Omega\!\!\int_{\partial\Omega}\!
\bar R_\delta(\xx,\yy)\,u(\xx)g(\yy)\,\d\xx\,\d S_\yy .
\label{eq:neumann-loss}
\end{align}
The first variation of \eqref{eq:neumann-loss} recovers
\eqref{eq:neumann-nonlocal} after symmetrization in $\xx,\yy$.

As in the Dirichlet case, we precondition the data by TNN surrogates
$\tilde f$, $\tilde g$ satisfying
\begin{equation}
\label{eq:neumann-precondition}
\|f-\tilde f\|_{L^2(\Omega)}<\varepsilon_f,
\qquad
\|g-\tilde g\|_{L^2(\partial\Omega)}<\varepsilon_g ,
\end{equation}
and minimize over $\mathcal{V}_p^d$ the modified loss
\begin{align}
\tilde{\mathcal{L}}_{\delta,N}(u)
&:= \frac{1}{4\delta^2}
\!\int_\Omega\!\!\int_\Omega\!
R_\delta(\xx,\yy)\bigl(u(\xx)-u(\yy)\bigr)^2\,\d\xx\,\d\yy
+ \frac{1}{2}
\!\int_\Omega\!\!\int_\Omega\!
\bar R_\delta(\xx,\yy)\,u(\xx)u(\yy)\,\d\xx\,\d\yy
\notag \\
&\quad
-\!\int_\Omega\!\!\int_\Omega\!
\bar R_\delta(\xx,\yy)\,u(\xx)\tilde f(\yy)\,\d\xx\,\d\yy
- 2\!\int_\Omega\!\!\int_{\partial\Omega}\!
\bar R_\delta(\xx,\yy)\,u(\xx)\tilde g(\yy)\,\d\xx\,\d S_\yy .
\label{eq:neumann-actual-loss}
\end{align}
The factorization of \eqref{eq:neumann-actual-loss} into products of
low-dimensional integrals proceeds exactly as in the Dirichlet case, so we
omit the details. Assume that the exact minimizer
$\tilde u_{\delta,p,N}\in\mathcal{V}_p^d$ of
$\tilde{\mathcal{L}}_{\delta,N}$ over $\mathcal{V}_p^d$ exists, and let
$u_{\delta,p,N}\in\mathcal{V}_p^d$ be the corresponding TNN output,
with optimization error
\begin{equation}
\label{eq:opt-error-neumann}
\eta_{\mathrm{opt}}^{\,2}
:= \tilde{\mathcal{L}}_{\delta,N}(u_{\delta,p,N})
- \tilde{\mathcal{L}}_{\delta,N}(\tilde u_{\delta,p,N}) \ge 0 .
\end{equation}

The Neumann counterpart of Theorem~\ref{thm:dirichlet-error} reads as
follows; the improvement over the Dirichlet case lies in the better
$\delta$-dependence of the data terms.

\begin{theorem}[$L^2$ error estimate, Neumann case]
\label{thm:neumann-error-l2}
Let $0<\delta\le\delta_0\le 1$, where $\delta_0$ is fixed, and let
$\varepsilon_f,\varepsilon_g,\varepsilon_u>0$ be prescribed tolerances.
Assume that the Neumann data satisfy
\begin{equation}
\label{eq:neumann-data-regularity-main}
f\in H^1(\Omega),
\qquad
g\in H^{3/2}(\partial\Omega),
\end{equation}
so that, by the elliptic regularity statement in
Remark~\ref{rem:neumann-regularity}, the local solution satisfies
$u_{\mathrm{loc},N}\in H^3(\Omega)$ and
\[
\|u_{\mathrm{loc},N}\|_{H^3(\Omega)}
\le
C\!\left(\|f\|_{H^1(\Omega)}
+\|g\|_{H^{3/2}(\partial\Omega)}\right).
\]
Assume that there exist $\tilde f\in\mathcal{V}_{p_f}^d$,
$\tilde g\in\mathcal{V}_{p_g}^{d-1}$ and
$\bar u_{\delta,N}\in\mathcal{V}_{p_u}^d$ for some
$p_f,p_g,p_u\in\mathbb{N}^{*}$ satisfying
\begin{align*}
\|f-\tilde f\|_{L^2(\Omega)}
&\le \varepsilon_f,
&
\|g-\tilde g\|_{L^2(\partial\Omega)}
&\le \varepsilon_g,
&
\|u_{\delta,N}-\bar u_{\delta,N}\|_{H^1(\Omega)}
&\le \varepsilon_u .
\end{align*}
Let $p\ge p_u$. Assume that $\tilde{\mathcal L}_{\delta,N}$ admits a
minimizer $\tilde u_{\delta,p,N}$ over $\mathcal{V}_p^d$, and let
$u_{\delta,p,N}\in\mathcal{V}_p^d$ be the TNN output, with optimization error
$\eta_{\mathrm{opt}}$ defined by \eqref{eq:opt-error-neumann}. Then there
exists a constant $C>0$, independent of $\delta$, such that
\begin{equation}
\label{eq:neumann-estimate-1}
\|u_{\delta,N}-u_{\delta,p,N}\|_{L^2(\Omega)}
\le
C\!\left(
\varepsilon_f
+\frac{\varepsilon_g}{\sqrt{\delta}}
+\varepsilon_u
+\eta_{\mathrm{opt}}\right) .
\end{equation}
Moreover,
\begin{equation}
\label{eq:neumann-estimate-2}
\|u_{\mathrm{loc},N}-u_{\delta,p,N}\|_{L^2(\Omega)}
\le
C\!\left(
\varepsilon_f
+\frac{\varepsilon_g}{\sqrt{\delta}}
+\varepsilon_u
+\eta_{\mathrm{opt}}\right)
+
C\delta\!\left(\|f\|_{H^1(\Omega)}
+\|g\|_{H^{3/2}(\partial\Omega)}\right) .
\end{equation}
\end{theorem}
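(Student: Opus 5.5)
We prove Theorem~\ref{thm:neumann-error-l2} with the standard energy-norm argument for a quadratic functional, using the coercivity of the Neumann energy in $L^2$. Write the Neumann bilinear form
\[
B_N(u,v)=\frac{1}{2\delta^2}\int_\Omega\int_\Omega R_\delta(\xx,\yy)(u(\xx)-u(\yy))(v(\xx)-v(\yy))\,\d\xx\,\d\yy+\int_\Omega\int_\Omega \bar R_\delta(\xx,\yy)u(\xx)v(\yy)\,\d\xx\,\d\yy .
\]
Then $\mathcal L_{\delta,N}(u)=\tfrac12B_N(u,u)-\ell(u)$ and $\tilde{\mathcal L}_{\delta,N}(u)=\tfrac12B_N(u,u)-\tilde\ell(u)$, where $\ell,\tilde\ell$ are the linear data functionals.

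The first step is to prove coercivity $B_N(u,u)\ge c\|u\|_{L^2}^2$ and continuity $B_N(u,u)\le C\|u\|_{H^1}^2$, with constants independent of $\delta$. For the Gaussian kernel, $\bar R_\delta=s^{-2}R_\delta$ and the Fourier transform of the Gaussian is positive, so $\int\int\bar R_\delta u(\xx)u(\yy)$ is nonnegative. Writing $\int\int\bar R_\delta uu = \int w\,u^2 - \tfrac12\int\int \bar R_\delta(u(\xx)-u(\yy))^2$, where $w(\xx)=\int_\Omega\bar R_\delta(\xx,\yy)\,\d\yy\ge C_3$ by \eqref{eq:kernel-est-2}, the subtracted term is at most $\tfrac{\delta^2}{s^2}$ times the Dirichlet-type term. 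For $\delta\le\delta_0$ small this gives $B_N(u,u)\ge c\|u\|_{L^2}^2$; this is the argument already needed for Proposition~\ref{prop:neumann-wellposedness}. Continuity in $H^1$ follows from the standard bound $\frac{1}{\delta^2}\int\int R_\delta(u(\xx)-u(\yy))^2\le C\|\nabla u\|_{L^2}^2$ together with \eqref{eq:kernel-est-2}.

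The second step estimates the data functionals. One has
\[
|(\ell-\tilde\ell)(v)|\le \Bigl|\int\int\bar R_\delta v(\xx)(f-\tilde f)(\yy)\Bigr|+2\Bigl|\int\int_{\partial\Omega}\bar R_\delta v(\xx)(g-\tilde g)(\yy)\Bigr| .
\]
By Schur/Cauchy--Schwarz and \eqref{eq:kernel-est-2}, the first term is at most $C\varepsilon_f\|v\|_{L^2}$. For the second term, Cauchy--Schwarz with weights gives the bound $2\bigl(\int_{\partial\Omega}(g-\tilde g)^2\int_\Omega\bar R_\delta\,\d\xx\bigr)^{1/2}\bigl(\int_\Omega v^2\int_{\partial\Omega}\bar R_\delta\,\d S\bigr)^{1/2}$. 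Using $\int_\Omega \bar R_\delta(\xx,\yy)\,\d\xx\le C$ for $\yy\in\partial\Omega$ and \eqref{eq:kernel-est-3}, this yields $C\varepsilon_g\delta^{-1/2}\|v\|_{L^2}$. Hence $|(\ell-\tilde\ell)(v)|\le C(\varepsilon_f+\varepsilon_g/\sqrt\delta)\|v\|_{L^2}$. Only $L^2$ norms of the test function appear, which is the source of the improvement over the Dirichlet case: there the $1/\delta$ weight forces a boundary or trace bound and costs extra powers of $\delta$.

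The third step combines these. Let $\tilde u_\delta\in H^1$ minimize $\tilde{\mathcal L}_{\delta,N}$. Since $B_N(u_{\delta,N}-\tilde u_\delta,v)=(\ell-\tilde\ell)(v)$, taking $v=u_{\delta,N}-\tilde u_\delta$ and using coercivity gives $\|u_{\delta,N}-\tilde u_\delta\|_{L^2}\le C(\varepsilon_f+\varepsilon_g/\sqrt\delta)$. For any $v$, the quadratic identity gives $\tilde{\mathcal L}_{\delta,N}(v)-\tilde{\mathcal L}_{\delta,N}(\tilde u_\delta)=\tfrac12B_N(v-\tilde u_\delta,v-\tilde u_\delta)$. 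Because $p\ge p_u$, the competitor $\bar u_{\delta,N}\in\mathcal V_p^d$, and minimality of $\tilde u_{\delta,p,N}$ gives
\[
\tfrac12 B_N(u_{\delta,p,N}-\tilde u_\delta,u_{\delta,p,N}-\tilde u_\delta)=\eta_{\mathrm{opt}}^2+\tilde{\mathcal L}_{\delta,N}(\tilde u_{\delta,p,N})-\tilde{\mathcal L}_{\delta,N}(\tilde u_\delta)\le \eta_{\mathrm{opt}}^2+\tfrac12B_N(\bar u_{\delta,N}-\tilde u_\delta,\bar u_{\delta,N}-\tilde u_\delta).
\]
The main obstacle is bounding the last term. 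Splitting $\bar u_{\delta,N}-\tilde u_\delta=(\bar u_{\delta,N}-u_{\delta,N})+(u_{\delta,N}-\tilde u_\delta)$, the first piece is bounded by continuity as $C\varepsilon_u^2$. For the second piece I would not use $H^1$ continuity. Instead I would use the energy identity $B_N(e,e)=(\ell-\tilde\ell)(e)$ with $e=u_{\delta,N}-\tilde u_\delta$, which gives $B_N(e,e)\le C(\varepsilon_f+\varepsilon_g/\sqrt\delta)\|e\|_{L^2}\le C(\varepsilon_f+\varepsilon_g/\sqrt\delta)^2$. This avoids any $H^1$ bound on $e$ and any negative power of $\delta$ beyond the one already present.

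Coercivity then gives $\|u_{\delta,p,N}-\tilde u_\delta\|_{L^2}\le C(\eta_{\mathrm{opt}}+\varepsilon_u+\varepsilon_f+\varepsilon_g/\sqrt\delta)$. The triangle inequality yields \eqref{eq:neumann-estimate-1}. Finally, \eqref{eq:neumann-estimate-2} follows by adding Proposition~\ref{prop:neumann-locallimit} and Remark~\ref{rem:neumann-regularity}.
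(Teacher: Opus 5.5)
Your proof is correct. It differs from the paper's at one step: how the energy of $e:=u_{\delta,N}-\tilde u_{\delta,N}$ is controlled.

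The paper applies the $H^1$ stability estimate of Proposition~\ref{prop:neumann-wellposedness} to $e$, which gives \eqref{eq:neumann-error-actual-and-real}. It then bounds the energy of the whole competitor $\bar u_{\delta,N}-\tilde u_{\delta,N}$ through the $H^1$ continuity $E_{\delta,N}(u)\le C\|u\|_{H^1(\Omega)}^2$, and it simply cites coercivity from \cite[Lemma~3.1]{meng2026asymptotically}. You use $H^1$ continuity only on the $\varepsilon_u$ piece. You bound $B_N(e,e)$ by the energy identity $B_N(e,e)=(\ell-\tilde\ell)(e)$ together with the $L^2$-dual bound on the data functional. You also derive $L^2$ coercivity yourself from $\bar R_\delta=s^{-2}R_\delta$.

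Your route is lighter. It never uses the gradient half of Proposition~\ref{prop:neumann-wellposedness}, namely the estimates of $\nabla I_1,\nabla I_2,\nabla I_3$ and $\nabla S_\delta u_{\delta,N}$ in Appendix~\ref{appendix:neumann-wellposedness}. It would even allow $\tilde u_{\delta,N}$ to be just the $L^2$ Lax--Milgram solution. The paper's route has one advantage: the $H^1$ bound \eqref{eq:neumann-error-actual-and-real} is needed anyway for the first term in the proof of Theorem~\ref{thm:neumann-error-h1}, where an $L^2$ bound on $e$ would not suffice.

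Your device is also stronger than your remark on the Dirichlet case suggests. There the $2/\delta$-weighted boundary data term is bounded by $C\delta^{-1/2}\varepsilon_g\sqrt{E_{\delta,2}(e)}$, so the energy identity would give data contributions $\varepsilon_f+\varepsilon_g/\sqrt\delta$ in the Dirichlet case too. The genuine Dirichlet loss then comes only from the factor $\delta^{-1}$ in \eqref{eq:dirichlet-energy-estimate} acting on the $\varepsilon_u$ piece.

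A minor point: your coercivity argument as written needs $\delta\le s$. For $s<\delta\le 1$, the positive definiteness you noted closes the gap. With your $w$, one has $B_N(u,u)=\frac{s^2}{\delta^2}\int_\Omega w\,u^2\,\d\xx+\bigl(1-\frac{s^2}{\delta^2}\bigr)\int_\Omega\int_\Omega\bar R_\delta(\xx,\yy)u(\xx)u(\yy)\,\d\xx\,\d\yy$, and this is at least $s^2C_3\|u\|_{L^2(\Omega)}^2$.
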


In addition to the $L^2$ estimates above, the Neumann setting allows for a
gradient estimate, which is not available in the Dirichlet case.

\begin{theorem}[Gradient estimate, Neumann case]
\label{thm:neumann-error-h1}
Under the assumptions of Theorem~\ref{thm:neumann-error-l2}, the smoothed
TNN output $S_\delta u_{\delta,p,N}$ satisfies
\begin{equation}
\label{eq:neumann-H1-estimate-1}
\begin{aligned}
&\|\nabla u_{\delta,N}-\nabla S_\delta u_{\delta,p,N}\|_{L^2(\Omega)}
\\
&\qquad\le
C\!\left(
\varepsilon_f
+\frac{\varepsilon_g}{\sqrt{\delta}}
+\varepsilon_u
+\eta_{\mathrm{opt}}\right)
+
C\!\left(\delta\,\|f\|_{L^2(\Omega)}
+\sqrt{\delta}\,\|g\|_{L^2(\partial\Omega)}\right) .
\end{aligned}
\end{equation}
Moreover,
\begin{equation}
\label{eq:neumann-H1-estimate-2}
\begin{aligned}
&\|\nabla u_{\mathrm{loc},N}-\nabla S_\delta u_{\delta,p,N}\|_{L^2(\Omega)}
\\
&\qquad\le
C\!\left(
\varepsilon_f
+\frac{\varepsilon_g}{\sqrt{\delta}}
+\varepsilon_u
+\eta_{\mathrm{opt}}\right)
+
C\!\left(\delta\,\|f\|_{H^1(\Omega)}
+\sqrt{\delta}\,\|g\|_{H^{3/2}(\partial\Omega)}\right) .
\end{aligned}
\end{equation}
Here $S_\delta$ is the smoothing operator defined in \eqref{eq:mollifier}.
\end{theorem}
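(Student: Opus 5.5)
The plan is to split
\[
\nabla u_{\delta,N}-\nabla S_\delta u_{\delta,p,N}
=\bigl(\nabla u_{\delta,N}-\nabla S_\delta u_{\delta,N}\bigr)+\nabla S_\delta\bigl(u_{\delta,N}-u_{\delta,p,N}\bigr),
\]
and bound the two pieces separately. The first piece is the smoothing error applied to the exact nonlocal solution. The second is the discrete error, propagated through $S_\delta$.

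For the second piece, differentiate $S_\delta v=w_\delta^{-1}\int_\Omega R_\delta(\cdot,\yy)v(\yy)\,\d\yy$. The gradient falls either on the kernel or on $1/w_\delta$. By \eqref{eq:wdelta-grad} and $\int_\Omega|\nabla_\xx R_\delta|\,\d\yy\le C/\delta$, a naive Young/Schur bound gives only $\|\nabla S_\delta v\|_{L^2}\le C\delta^{-1}\|v\|_{L^2}$, which is too weak.

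I would instead exploit the energy. Since $\int_\Omega\nabla_\xx R_\delta(\xx,\yy)\,\d\yy=\nabla w_\delta(\xx)$, we can write
\[
\nabla S_\delta v(\xx)=w_\delta(\xx)^{-1}\int_\Omega\nabla_\xx R_\delta(\xx,\yy)\bigl(v(\yy)-v(\xx)\bigr)\,\d\yy
+\text{(a term with $\nabla w_\delta$ times differences)}.
\]
Cauchy--Schwarz with $|\nabla_\xx R_\delta|\lesssim \delta^{-1}\tilde R_\delta$ (Gaussian moments) then gives
\[
\|\nabla S_\delta v\|_{L^2}^2\le \frac{C}{\delta^2}\int_\Omega\!\int_\Omega R_\delta(\xx,\yy)\bigl(v(\xx)-v(\yy)\bigr)^2,
\]
which is controlled by the quadratic (energy) part of the Neumann form $B_N(v,v)$. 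So the key intermediate bound I aim for is
\[
B_N(e,e)\le C\Bigl(\varepsilon_f+\frac{\varepsilon_g}{\sqrt\delta}+\varepsilon_u+\eta_{\mathrm{opt}}\Bigr)^2,
\qquad e=u_{\delta,N}-u_{\delta,p,N}.
\]
I expect this already appears inside the proof of Theorem~\ref{thm:neumann-error-l2}, via a C\'ea-type argument. By the Pythagoras identity for the quadratic loss, $\tilde{\mathcal L}_{\delta,N}(v)-\tilde{\mathcal L}_{\delta,N}(\tilde u^\ast)=\frac12B_N(v-\tilde u^\ast,v-\tilde u^\ast)$, where $\tilde u^\ast$ is the $H^1$ minimizer with perturbed data. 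Comparing with $\bar u_{\delta,N}\in\mathcal V_p^d$ gives the $\varepsilon_u$ and $\eta_{\mathrm{opt}}$ terms. Stability of the nonlocal problem in the energy norm then bounds $u_{\delta,N}-\tilde u^\ast$ by $\varepsilon_f+\varepsilon_g/\sqrt\delta$, using \eqref{eq:kernel-est-2}--\eqref{eq:kernel-est-3} and a nonlocal trace inequality.

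For the $\varepsilon_u$ term I must check that $B_N(w,w)\le C\|w\|_{H^1}^2$ uniformly in $\delta$; this holds because $\delta^{-2}\iint R_\delta(w(\xx)-w(\yy))^2\le C\|\nabla w\|^2$.

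For the first piece, I would mimic the appendix argument for Proposition~\ref{prop:neumann-locallimit}. Using the equation \eqref{eq:neumann-nonlocal}, write
\[
u_{\delta,N}-S_\delta u_{\delta,N}=\frac{\delta^2}{w_\delta}\bigl(\text{right-hand side}-\textstyle\int\bar R_\delta u_{\delta,N}\bigr).
\]
The gradient of this is $\delta^2$ times $\nabla$ of smoothed data. The bound $\|\nabla\int\bar R_\delta f\|\le C\delta^{-1}\|f\|_{L^2}$ gives the $\delta\|f\|$ contribution, and $\int_{\partial\Omega}$ terms give $\delta^2\cdot\delta^{-3/2}\|g\|_{L^2(\partial\Omega)}=\sqrt\delta\|g\|$. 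The $\bar R_\delta u$ term gives $\delta\|u_{\delta,N}\|$, which in turn is bounded by Proposition~\ref{prop:neumann-wellposedness}, yielding $\delta\|f\|+\sqrt\delta\|g\|$. This produces exactly \eqref{eq:neumann-H1-estimate-1}. The estimate \eqref{eq:neumann-H1-estimate-2} then follows by adding Proposition~\ref{prop:neumann-locallimit} and Remark~\ref{rem:neumann-regularity}, with $\delta\|u_{\mathrm{loc},N}\|_{H^3}$ absorbed since $\delta\le\sqrt\delta$.

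The main obstacle is the gradient-of-smoothing bound. The factor $\delta^{-2}$ against the energy must be exactly balanced, and the $\nabla w_\delta$ term near the boundary must be handled without losing powers of $\delta$. I would treat the latter by also rewriting it with differences $v(\yy)-v(\xx)$ via symmetrization.
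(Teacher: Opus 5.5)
Your proposal is correct, but it uses a different pivot from the paper's proof.

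\emph{The paper's route.} It splits through $\tilde u_{\delta,N}$, the exact nonlocal solution with surrogate data, into three terms:
\begin{itemize}
\item[(i)] $\|\nabla(u_{\delta,N}-\tilde u_{\delta,N})\|_{L^2}$, bounded by the full $H^1$ well-posedness estimate of Proposition~\ref{prop:neumann-wellposedness}, including its gradient part from the appendix;
\item[(ii)] $\|\nabla\tilde u_{\delta,N}-\nabla S_\delta\tilde u_{\delta,N}\|_{L^2}$, bounded by the $I_1,I_2,I_3$ estimates with $\tilde f,\tilde g$; this produces extra terms $\delta\varepsilon_f+\sqrt\delta\,\varepsilon_g$ that are then absorbed;
\item[(iii)] $\|\nabla S_\delta(\tilde u_{\delta,N}-u_{\delta,p,N})\|_{L^2}$, read off from the energy bound of the $L^2$ proof together with the cited inequality $E_{\delta,N}(u)\ge C\|\nabla S_\delta u\|_{L^2}^2$.
\end{itemize}

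\emph{Your route.} You pivot on $u_{\delta,N}$ instead. Your smoothing-error term then involves only the true data and gives $C(\delta\|f\|_{L^2}+\sqrt\delta\,\|g\|_{L^2(\partial\Omega)})$ directly, with no $\varepsilon$-terms. Every perturbation term goes through $\sqrt{E_{\delta,N}}$.

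\emph{What this costs and gains.} The $L^2$ proof only controls $E_{\delta,N}(u_{\delta,p,N}-\tilde u_{\delta,N})$, not $E_{\delta,N}(u_{\delta,N}-u_{\delta,p,N})$. So you genuinely need two further ingredients:
\begin{itemize}
\item the triangle inequality for the positive-definite form $\sqrt{E_{\delta,N}}$;
\item the energy-norm stability $E_{\delta,N}(u_{\delta,N}-\tilde u_{\delta,N})^{1/2}\le C(\varepsilon_f+\varepsilon_g/\sqrt\delta)$.
\end{itemize}
The gain is that this stability follows just from testing the difference equation with itself, Cauchy--Schwarz with \eqref{eq:kernel-est-2}--\eqref{eq:kernel-est-3}, and $L^2$ coercivity; no trace inequality is needed. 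Hence you never use the gradient half of the $H^1$ stability result for the data perturbation.

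\emph{The $\nabla S_\delta$ bound.} You prove this yourself rather than citing it, and your difference rewriting of both terms, including the $\nabla w_\delta$ one, is right. In the Cauchy--Schwarz step, use weight $R_\delta$ and the second moment $\int_\Omega|\xx-\yy|^2\delta^{-4}R_\delta(\xx,\yy)\,\d\yy\le C\delta^{-2}$. That lands you exactly on $E_{\delta,1}(v)$, rather than on the energy of a wider kernel $\tilde R_\delta$, which would need an extra kernel-comparison lemma. Then $E_{\delta,1}\le CE_{\delta,N}$ follows from $|E_{\delta,3}(v)|\le C\|v\|_{L^2}^2$ and $L^2$ coercivity.
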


\begin{remark}
Theorem~\ref{thm:dirichlet-error} provides only an $L^2$ estimate in the
Dirichlet case. Numerical experiments (see
Section~\ref{sec:numerical-tests}) suggest that the corresponding
$H^1$ convergence is also achieved, although a rigorous proof remains
open.
\end{remark}

The proofs of
Theorems~\ref{thm:dirichlet-error}--\ref{thm:neumann-error-h1} are given
in Section~\ref{sec:proof}.

\section{Proof of the main result}
\label{sec:proof}

\subsection{Proof of Theorem~\ref{thm:dirichlet-error}}
\label{subsec:proof-l2}

Define the nonlocal energy functional for the Dirichlet problem:
\begin{align}
E_\delta(u)
&:=
\frac{1}{2\delta^2}
\!\int_\Omega\!\!\int_\Omega\!
R_\delta(\xx,\yy)\bigl(u(\xx)-u(\yy)\bigr)^{2}
\,\d\xx\,\d\yy
+
\frac{2}{\delta}
\!\int_\Omega\!\!\int_{\partial\Omega}\!
\bar R_\delta(\xx,\yy)\,u(\xx)^{2}
\,\d\xx\,\d S_\yy
\notag \\
&=: E_{\delta,1}(u)+E_{\delta,2}(u).
\label{eq:dirichlet-energy}
\end{align}
The modified loss can be written as
\begin{equation}
\label{eq:dirichlet-loss-energy-linear}
\tilde{\mathcal{L}}_\delta(u)
=
\frac{1}{2}E_\delta(u)-\Phi_\delta^{\tilde f,\tilde g}(u),
\end{equation}
where the data-dependent linear functional is
\begin{align}
\Phi_\delta^{\tilde f,\tilde g}(u)
&:=
\!\int_\Omega\!\!\int_\Omega\!
\bar R_\delta(\xx,\yy)\,u(\xx)\tilde f(\yy)\,\d\xx\,\d\yy
+
\frac{2}{\delta}
\!\int_\Omega\!\!\int_{\partial\Omega}\!
\bar R_\delta(\xx,\yy)\,u(\xx)\tilde g(\yy)\,\d\xx\,\d S_\yy .
\label{eq:dirichlet-linear-functional}
\end{align}
Thus \eqref{eq:dirichlet-loss-energy-linear} separates the quadratic energy
from the scalar linear functional generated by the data through their
preconditioned surrogates $\tilde f$ and $\tilde g$. We will use the
following two lemmas to bound $E_\delta$.

\begin{lemma}
\label{lem:dirichlet-energy-bulk}
There exists a constant $C>0$, independent of $\delta$, such that
\begin{equation}
\label{eq:dirichlet-energy-bound-1}
E_{\delta,1}(u) \le C\,\|u\|_{H^1(\Omega)}^{2}
\end{equation}
for all $u\in H^1(\Omega)$.
\end{lemma}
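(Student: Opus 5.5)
We need to bound $E_{\delta,1}(u)=\frac{1}{2\delta^2}\int_\Omega\int_\Omega R_\delta(\xx,\yy)(u(\xx)-u(\yy))^2\,\d\xx\,\d\yy$ by $C\|u\|_{H^1}^2$. The plan is the standard one: prove the bound first for smooth $u$, then pass to general $u$ by density. For $u\in C^1(\overline\Omega)$ and $\xx,\yy\in\Omega$ the difference $u(\xx)-u(\yy)$ should be controlled by $\nabla u$ along a path between the two points.

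The natural path is the segment. Because $\Omega$ is a rectangle (hence convex), the segment from $\yy$ to $\xx$ lies in $\Omega$, so
\[
u(\xx)-u(\yy)=\int_0^1 (\xx-\yy)\cdot\nabla u(\yy+t(\xx-\yy))\,\d t .
\]
Cauchy--Schwarz then gives $(u(\xx)-u(\yy))^2\le |\xx-\yy|^2\int_0^1|\nabla u(\yy+t(\xx-\yy))|^2\,\d t$. Substitute this, write $\zz=\xx-\yy$, and change variables to $\mathbf w=\yy+t\zz$ for fixed $t,\zz$. This has unit Jacobian, and $\mathbf w$ ranges over a subset of $\Omega$, which yields
\[
E_{\delta,1}(u)\le \frac{1}{2\delta^2}\int_{\RR^d}\alpha_d\delta^{-d}R\!\left(\frac{|\zz|^2}{4\delta^2}\right)|\zz|^2\,\d\zz\;\|\nabla u\|_{L^2(\Omega)}^2 .
\]
Rescaling $\zz=\delta\,\mathbf{v}$ turns the prefactor into $\frac{\alpha_d}{2}\int_{\RR^d}R(|\mathbf v|^2/4)|\mathbf v|^2\,\d\mathbf v$. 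This is a finite second moment, independent of $\delta$. It is finite both for the compactly supported kernel and for the Gaussian kernel, since the Gaussian decays exponentially.

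For a general smooth domain (the setting of the theoretical sections), the segment may leave $\Omega$, and this is the main technical point. I would handle it with a bounded $H^1$ extension operator $\mathcal E:H^1(\Omega)\to H^1(\RR^d)$. Since $E_{\delta,1}(u)$ only increases when the double integral is enlarged to $\RR^d\times\RR^d$ with $\mathcal E u$ in place of $u$, the whole-space computation above applies to $\mathcal E u$. Combined with $\|\nabla\mathcal E u\|_{L^2(\RR^d)}\le C\|u\|_{H^1(\Omega)}$, this gives the claim.

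Finally, density of $C^1(\overline\Omega)$ (or $C_c^\infty(\RR^d)$ for the extension) in $H^1$ extends the bound to all of $H^1(\Omega)$. Both sides are continuous under $H^1$ convergence: for fixed $\delta$ the left side is a bounded quadratic form on $L^2$, by Fatou or directly from \eqref{eq:kernel-est-1}. The only genuine inputs are therefore the finiteness of the kernel's second moment and the extension/convexity argument.
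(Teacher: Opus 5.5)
Your argument is correct and is essentially the paper's proof. The paper likewise extends $u$ to $H^1(\RR^d)$, enlarges the integration domain to $\RR^d\times\RR^d$, substitutes $\yy=\xx+\delta\zz$, and combines two ingredients: the translation estimate $\|u(\cdot+\mathbf{h})-u\|_{L^2(\RR^d)}\le C|\mathbf{h}|\,\|u\|_{H^1(\Omega)}$ (cited from Bourgain--Brezis--Mironescu) and the finite second moment $\int_{\RR^d}R(|\zz|^2/4)|\zz|^2\,\d\zz<\infty$. Your segment/fundamental-theorem-of-calculus computation is simply the standard proof of that translation estimate. Your convex-domain variant for a single rectangle is a small bonus: it avoids the extension and bounds $E_{\delta,1}(u)$ by $\|\nabla u\|_{L^2(\Omega)}^2$ alone.
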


The proof of Lemma~\ref{lem:dirichlet-energy-bulk} uses the following
standard shift estimate in Sobolev spaces.

\begin{lemma}[{\cite{bourgain2001another}}]
\label{lem:sobolev-shift}
There exists a constant $C=C(d,p)$ such that
\begin{equation}
\label{eq:sobolev-shift}
\left(\int_{\RR^d}|f(\xx+\mathbf{h})-f(\xx)|^p\,\d\xx\right)^{1/p}
\le
C\,|\mathbf{h}|\,\|f\|_{W^{1,p}(\Omega)}
\end{equation}
for all $f\in W^{1,p}(\Omega)$ (extended to $W^{1,p}(\RR^d)$ via the Sobolev
extension theorem) and all $\mathbf{h}\in\RR^d$.
\end{lemma}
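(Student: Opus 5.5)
The statement to prove is Lemma~\ref{lem:sobolev-shift}, the shift estimate. Since the right-hand side involves $\|f\|_{W^{1,p}(\Omega)}$ while the left-hand side integrates over all of $\RR^d$, the plan is first to reduce to a statement about functions on the whole space and then to prove that statement by the fundamental theorem of calculus along the segment from $\xx$ to $\xx+\mathbf h$.

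\emph{Step 1 (extension).} Since $\Omega$ has a sufficiently smooth boundary (or is a finite union of rectangles, hence Lipschitz), the Stein/Calder\'on extension theorem provides a bounded linear operator $E:W^{1,p}(\Omega)\to W^{1,p}(\RR^d)$ with
\[
\|Ef\|_{W^{1,p}(\RR^d)}\le C_E\|f\|_{W^{1,p}(\Omega)},
\]
where $C_E$ depends on $\Omega$, $d$ and $p$. As the statement indicates, $f$ on the left is understood as $Ef$. It therefore suffices to prove, for all $F\in W^{1,p}(\RR^d)$ with $1\le p<\infty$,
\[
\|F(\cdot+\mathbf h)-F\|_{L^p(\RR^d)}\le |\mathbf h|\,\|\nabla F\|_{L^p(\RR^d)},
\]
and then to take $C=C_E$.

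\emph{Step 2 (smooth case).} By density, first take $F\in C_c^\infty(\RR^d)$. Then
\[
F(\xx+\mathbf h)-F(\xx)=\int_0^1\nabla F(\xx+t\mathbf h)\cdot\mathbf h\,\d t .
\]
Minkowski's integral inequality, or equivalently Jensen/H\"older in $t$ followed by Fubini, gives
\[
\|F(\cdot+\mathbf h)-F\|_{L^p}\le\int_0^1\bigl\|\nabla F(\cdot+t\mathbf h)\cdot\mathbf h\bigr\|_{L^p}\,\d t\le|\mathbf h|\,\|\nabla F\|_{L^p}.
\]
The last inequality uses the translation invariance of Lebesgue measure.

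\emph{Step 3 (density).} For general $F\in W^{1,p}(\RR^d)$, choose $F_n\in C_c^\infty$ with $F_n\to F$ in $W^{1,p}$. Translation is an isometry on $L^p$, so $F_n(\cdot+\mathbf h)-F_n\to F(\cdot+\mathbf h)-F$ in $L^p$, and the inequality passes to the limit.

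The only point requiring care is Step 1. The extension constant depends on the geometry of $\Omega$, which is why $C$ is written $C(d,p)$ with an implicit dependence on $\Omega$. The case $p=\infty$, which is not needed for Lemma~\ref{lem:dirichlet-energy-bulk} since there $p=2$, is handled the same way using a Lipschitz representative.
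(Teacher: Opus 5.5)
Your proof is correct. The paper gives no proof of its own, only the citation to Bourgain--Brezis--Mironescu, and your argument is the standard proof behind that reference: extend to $\RR^d$, apply the fundamental theorem of calculus along $t\mapsto \xx+t\mathbf{h}$ with Minkowski's inequality, then use density of $C_c^\infty$. Your remark that $C$ must also depend on $\Omega$ through the extension constant correctly sharpens the stated $C=C(d,p)$, but drop the aside that a finite union of rectangles is automatically Lipschitz. Two squares meeting only at a corner and joined through other rectangles are a counterexample, so take the existence of the extension operator from the paper's smooth-boundary standing assumption, as the lemma itself does.
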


\begin{proof}[Proof of Lemma~\ref{lem:dirichlet-energy-bulk}]
By the Sobolev extension theorem, we may assume $u\in H^1(\RR^d)$ with
$\|u\|_{H^1(\RR^d)}\le C\|u\|_{H^1(\Omega)}$. After the change of variables
$\yy=\xx+\delta\zz$,
\begin{align*}
E_{\delta,1}(u)
&\le
\frac{1}{2\delta^2}
\int_{\RR^d}\int_{\RR^d}
R_\delta(\xx,\yy)\bigl(u(\xx)-u(\yy)\bigr)^2
\,\d\xx\,\d\yy
\\
&=
\frac{\alpha_d}{2\delta^2}
\int_{\RR^d}
\int_{\RR^d}
R\!\left(\tfrac{|\zz|^2}{4}\right)
\bigl(u(\xx)-u(\xx+\delta\zz)\bigr)^2
\,\d\xx\,\d\zz .
\end{align*}
The scaled second moment
$\int_{\RR^d}R(|\zz|^2/4)|\zz|^2\,\d\zz$ is finite under the present kernel
assumptions. Applying Lemma~\ref{lem:sobolev-shift} with $p=2$ and
$\mathbf{h}=\delta\zz$,
\begin{align*}
E_{\delta,1}(u)
&\le
\frac{\alpha_d}{2\delta^2}
\int_{\RR^d}
R\!\left(\tfrac{|\zz|^2}{4}\right)
C^2\delta^2|\zz|^2\,\|u\|_{H^1(\Omega)}^2
\,\d\zz
\le
C\,\|u\|_{H^1(\Omega)}^2,
\end{align*}
which completes the proof.
\end{proof}

\begin{lemma}
\label{lem:dirichlet-energy-boundary}
There exists a constant $C>0$, independent of $\delta$, such that
\begin{equation}
\label{eq:dirichlet-energy-bound-2}
E_{\delta,2}(u) \le \frac{C}{\delta}\,\|u\|_{H^1(\Omega)}^2
\end{equation}
for all $u\in H^1(\Omega)$.
\end{lemma}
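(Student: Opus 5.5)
The plan is to apply Fubini to the boundary term, integrating first over $\xx\in\Omega$ so that all the kernel mass is carried by a one-variable weight, and then to control the resulting bulk integral of $u^2$. We write
\[
E_{\delta,2}(u)=\frac{2}{\delta}\int_\Omega u(\xx)^2\,W_\delta(\xx)\,\d\xx,
\qquad
W_\delta(\xx):=\int_{\partial\Omega}\bar R_\delta(\xx,\yy)\,\d S_\yy .
\]
By \eqref{eq:kernel-est-3}, $W_\delta(\xx)\le C_5/\delta$ uniformly in $\xx$, which gives only $E_{\delta,2}(u)\le (2C_5/\delta^2)\|u\|_{L^2(\Omega)}^2$. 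This is off by one power of $\delta$, so the crude bound cannot be used directly. The missing power has to come from the fact that $W_\delta$ is essentially concentrated in the layer $\{\mathrm{dist}(\xx,\partial\Omega)\lesssim\delta\}$, together with the fact that an $H^1$ function has $L^2$ mass of order $\delta\|u\|_{H^1}^2$ in such a layer.

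The first step is to make the concentration precise by a pointwise decay bound. Using the Gaussian (or, in the compactly supported case, exact vanishing) tail of $\bar R$, we expect
\[
W_\delta(\xx)\le \frac{C}{\delta}\,e^{-c\,\mathrm{dist}(\xx,\partial\Omega)^2/\delta^2}.
\]
This follows by splitting $\partial\Omega$ into dyadic shells around $\xx$. The part of $\partial\Omega$ at distance $\sim 2^k\delta$ has surface measure $\lesssim (2^k\delta)^{d-1}$ because the boundary is smooth (or piecewise flat in the rectangle case), and there the kernel is bounded by $\delta^{-d}e^{-c4^k}$.

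The second step is a layer estimate for $H^1$ functions. Let $t=\mathrm{dist}(\xx,\partial\Omega)$ and $\Omega_t$ its level sets. For smooth $\partial\Omega$, the trace inequality applied on the parallel surfaces $\Omega_t$ for $t\le t_0$ gives
\[
\int_{\Omega_t}u^2\,\d S\le C\|u\|_{H^1(\Omega)}^2 .
\]
For general $u\in H^1$ this follows by density. Alternatively, one can write $u(\cdot)$ on $\Omega_t$ via the fundamental theorem of calculus along normals from the boundary and then use Cauchy--Schwarz.

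The third step combines the two bounds using the coarea formula:
\[
\int_\Omega u^2 W_\delta\,\d\xx \le \frac{C}{\delta}\int_0^{t_0} e^{-ct^2/\delta^2}\!\int_{\Omega_t}u^2\,\d S\,\d t + \frac{C}{\delta}e^{-ct_0^2/\delta^2}\|u\|_{L^2}^2 \le C\|u\|_{H^1(\Omega)}^2 .
\]
Here $\int_0^\infty e^{-ct^2/\delta^2}\,\d t\sim\delta$ cancels the $1/\delta$ prefactor, and the exponentially small remainder on the deep interior is absorbed for $\delta\le\delta_0$. Multiplying by $2/\delta$ then yields \eqref{eq:dirichlet-energy-bound-2}.

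The main obstacle is the uniform trace bound on parallel surfaces. It requires a tubular neighborhood of fixed width $t_0$, which holds for smooth $\partial\Omega$. For the rectangle setting of assumption (b), one instead handles each face separately: within a slab near the face, the slices $x_i=\text{const}$ are flat, and the one-dimensional trace inequality applies in the $x_i$ direction. The corners are handled by summing over faces.
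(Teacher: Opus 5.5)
Your proposal is correct and follows the same route as the paper. Both arguments rewrite $E_{\delta,2}(u)=\frac{2}{\delta}\int_\Omega W_\delta(\xx)\,u(\xx)^2\,\d\xx$ and reduce the claim to the boundary-layer (thin-strip) estimate $\int_\Omega W_\delta\,u^2\,\d\xx\le C\|u\|_{H^1(\Omega)}^2$. The only difference is that the paper does not prove this estimate: it cites it from a thin-strip lemma for compactly supported kernels and appeals to exponential decay for the Gaussian. You prove it, via the Gaussian decay of $W_\delta$ away from $\partial\Omega$, a uniform trace bound on parallel surfaces combined with the coarea formula, and face-by-face slicing for the rectangular setting.
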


\begin{proof}[Proof of Lemma~\ref{lem:dirichlet-energy-boundary}]
We use the standard boundary-layer trace estimate
\begin{equation*}
\int_\Omega
\left(\int_{\partial\Omega}\bar R_\delta(\xx,\yy)\,\d S_\yy\right)
u(\xx)^2\,\d\xx
\le C\|u\|_{H^1(\Omega)}^2 .
\end{equation*}
For compactly supported kernels this is the usual thin-strip estimate
\cite[Lemma~A.1]{meng2026asymptotically}; the Gaussian kernel gives the same
estimate by its exponential decay. Therefore
\begin{align*}
E_{\delta,2}(u)
&=
\frac{2}{\delta}
\int_\Omega\int_{\partial\Omega}
\bar R_\delta(\xx,\yy)\,u(\xx)^2\,\d\xx\,\d S_\yy
\le
\frac{C}{\delta}\,\|u\|_{H^1(\Omega)}^2 .
\end{align*}
\end{proof}

Combining Lemmas~\ref{lem:dirichlet-energy-bulk}
and~\ref{lem:dirichlet-energy-boundary},
\begin{equation}
\label{eq:dirichlet-energy-estimate}
E_\delta(u) \le \frac{C}{\delta}\,\|u\|_{H^1(\Omega)}^{2}
\qquad\text{for all }u\in H^1(\Omega).
\end{equation}

\begin{proof}[Proof of Theorem~\ref{thm:dirichlet-error}]
Let $\tilde u_\delta\in H^1(\Omega)$ be the exact minimizer of
$\tilde{\mathcal{L}}_\delta$ over $H^1(\Omega)$, i.e., the solution of
\eqref{eq:dirichlet-nonlocal} with $f$ and $g$ replaced by $\tilde f$ and
$\tilde g$. Since $u_\delta$ solves \eqref{eq:dirichlet-nonlocal} with data
$f$, $g$, the difference $u_\delta-\tilde u_\delta$ satisfies the same
equation with data $f-\tilde f$, $g-\tilde g$. Applying
Proposition~\ref{prop:dirichlet-wellposedness} and
\eqref{eq:dirichlet-precondition},
\begin{equation}
\label{eq:dirichlet-tilde-h1}
\|u_\delta-\tilde u_\delta\|_{H^1(\Omega)}
\le
C\!\left(\|f-\tilde f\|_{L^2(\Omega)}
+\frac{\|g-\tilde g\|_{L^2(\partial\Omega)}}{\sqrt{\delta}}\right)
\le
C\!\left(\varepsilon_f+\frac{\varepsilon_g}{\sqrt\delta}\right).
\end{equation}
Together with \eqref{eq:dirichlet-tolerances},
\begin{equation}
\label{eq:dirichlet-error-actual-and-real}
\|\bar u_\delta-\tilde u_\delta\|_{H^1(\Omega)}
\le
C\!\left(\varepsilon_f+\frac{\varepsilon_g}{\sqrt\delta}\right)+\varepsilon_u .
\end{equation}

Since $\tilde u_\delta$ is the minimizer of $\tilde{\mathcal{L}}_\delta$ in
$H^1(\Omega)$ and the loss has the decomposition
\eqref{eq:dirichlet-loss-energy-linear}, the second-order expansion at
$\tilde u_\delta$ gives
\[
\tilde{\mathcal{L}}_\delta(v)
= \tilde{\mathcal{L}}_\delta(\tilde u_\delta)
+ \tfrac{1}{2}E_\delta(v-\tilde u_\delta)
\qquad\text{for all }v\in H^1(\Omega).
\]
Applying this to $v=\bar u_\delta$ and using \eqref{eq:dirichlet-energy-estimate}
and \eqref{eq:dirichlet-error-actual-and-real},
\begin{equation}
\label{eq:dirichlet-bar-loss}
\tilde{\mathcal{L}}_\delta(\bar u_\delta)
\le
\tilde{\mathcal{L}}_\delta(\tilde u_\delta)
+
C\!\left(\frac{\varepsilon_f^2}{\delta}
+\frac{\varepsilon_g^2}{\delta^2}
+\frac{\varepsilon_u^2}{\delta}\right).
\end{equation}
Since $p\ge p_u$, we have $\bar u_\delta\in\mathcal{V}_p^d$. Because
$\tilde u_{\delta,p}$ minimizes $\tilde{\mathcal{L}}_\delta$ over
$\mathcal{V}_p^d$, and using the definition of $\eta_{\mathrm{opt}}$ in
\eqref{eq:opt-error-dirichlet},
\begin{equation}
\label{eq:dirichlet-final-1}
\tilde{\mathcal{L}}_\delta(u_{\delta,p})
\le
\tilde{\mathcal{L}}_\delta(\tilde u_\delta)
+
C\!\left(\frac{\varepsilon_f^2}{\delta}
+\frac{\varepsilon_g^2}{\delta^2}
+\frac{\varepsilon_u^2}{\delta}\right)
+\eta_{\mathrm{opt}}^{\,2}.
\end{equation}

On the other hand, by the coercivity of $E_\delta$
\cite[Lemma~3.1]{meng2026asymptotically},
\begin{equation}
\label{eq:dirichlet-coercivity}
E_\delta(u) \ge C\,\|u\|_{L^2(\Omega)}^{2}
\qquad\text{for all }u\in L^2(\Omega).
\end{equation}
Applying the same second-order expansion to $u_{\delta,p}$:
\begin{equation}
\label{eq:dirichlet-final-2}
\tilde{\mathcal{L}}_\delta(u_{\delta,p})
= \tilde{\mathcal{L}}_\delta(\tilde u_\delta)
+ \tfrac{1}{2}E_\delta(u_{\delta,p}-\tilde u_\delta)
\ge
\tilde{\mathcal{L}}_\delta(\tilde u_\delta)
+ C\,\|u_{\delta,p}-\tilde u_\delta\|_{L^2(\Omega)}^{2}.
\end{equation}
Combining \eqref{eq:dirichlet-final-1} and \eqref{eq:dirichlet-final-2},
\[
\|u_{\delta,p}-\tilde u_\delta\|_{L^2(\Omega)}^{2}
\le
C\!\left(\frac{\varepsilon_f^2}{\delta}
+\frac{\varepsilon_g^2}{\delta^2}
+\frac{\varepsilon_u^2}{\delta}\right)
+\eta_{\mathrm{opt}}^{\,2},
\]
hence
\begin{equation}
\label{eq:dirichlet-final-3}
\|u_{\delta,p}-\tilde u_\delta\|_{L^2(\Omega)}
\le
C\!\left(\frac{\varepsilon_f}{\sqrt\delta}
+\frac{\varepsilon_g}{\delta}
+\frac{\varepsilon_u}{\sqrt\delta}
+\eta_{\mathrm{opt}}\right).
\end{equation}
By \eqref{eq:dirichlet-tilde-h1}, the triangle inequality, and $0<\delta\le1$,
\begin{equation}
\label{eq:dirichlet-final-4}
\|u_{\delta,p}-u_\delta\|_{L^2(\Omega)}
\le
C\!\left(\frac{\varepsilon_f}{\sqrt\delta}
+\frac{\varepsilon_g}{\delta}
+\frac{\varepsilon_u}{\sqrt\delta}
+\eta_{\mathrm{opt}}\right),
\end{equation}
which proves \eqref{eq:dirichlet-estimate-1}. For \eqref{eq:dirichlet-estimate-2},
observe that
Proposition~\ref{prop:dirichlet-locallimit} and the continuous embedding
$H^1(\Omega)\hookrightarrow L^2(\Omega)$ give
\begin{align*}
\|u_{\mathrm{loc}}-u_\delta\|_{L^2(\Omega)}
&\le
\|u_{\mathrm{loc}}-u_\delta\|_{H^1(\Omega)}
\le
C\sqrt{\delta}\,\|u_{\mathrm{loc}}\|_{H^3(\Omega)}
\\
&\le
C\sqrt{\delta}\!\left(\|f\|_{H^1(\Omega)}
+\|g\|_{H^{5/2}(\partial\Omega)}\right),
\end{align*}
where the last step uses the elliptic regularity estimate of
Remark~\ref{rem:dirichlet-regularity}. Combining this with
\eqref{eq:dirichlet-final-4} via the triangle inequality yields
\eqref{eq:dirichlet-estimate-2}.
\end{proof}

\subsection{Proof of Theorems~\ref{thm:neumann-error-l2}
and~\ref{thm:neumann-error-h1}}
\label{subsec:proof-h1}

For the Neumann problem, define the energy functional
\begin{align}
E_{\delta,N}(u)
&:=
\frac{1}{2\delta^{2}}
\!\int_\Omega\!\!\int_\Omega\!
R_\delta(\xx,\yy)\bigl(u(\xx)-u(\yy)\bigr)^{2}
\,\d\xx\,\d\yy
+
\!\int_\Omega\!\!\int_\Omega\!
\bar R_\delta(\xx,\yy)\,u(\xx)u(\yy)
\,\d\xx\,\d\yy
\notag \\
&=: E_{\delta,1}(u)+E_{\delta,3}(u),
\label{eq:neumann-energy}
\end{align}
so that
\begin{equation}
\label{eq:neumann-loss-energy-linear}
\tilde{\mathcal{L}}_{\delta,N}(u)
=
\frac{1}{2}E_{\delta,N}(u)-\Phi_{\delta,N}^{\tilde f,\tilde g}(u),
\end{equation}
where the data-dependent linear functional is
\begin{align}
\Phi_{\delta,N}^{\tilde f,\tilde g}(u)
&:=
\!\int_\Omega\!\!\int_\Omega\!
\bar R_\delta(\xx,\yy)\,u(\xx)\tilde f(\yy)\,\d\xx\,\d\yy
+
2\!\int_\Omega\!\!\int_{\partial\Omega}\!
\bar R_\delta(\xx,\yy)\,u(\xx)\tilde g(\yy)\,\d\xx\,\d S_\yy .
\label{eq:neumann-linear-functional}
\end{align}

By Cauchy--Schwarz and \eqref{eq:kernel-est-2},
\begin{align*}
E_{\delta,3}(u)
&\le
\!\left(\int_\Omega\!\!\int_\Omega\!
\bar R_\delta(\xx,\yy)\,u(\xx)^2\,\d\xx\,\d\yy\right)^{\!1/2}
\\
&\quad\times
\!\left(\int_\Omega\!\!\int_\Omega\!
\bar R_\delta(\xx,\yy)\,u(\yy)^2\,\d\xx\,\d\yy\right)^{\!1/2}
\le C\,\|u\|_{L^2(\Omega)}^{2}.
\end{align*}
Combining with Lemma~\ref{lem:dirichlet-energy-bulk},
\begin{equation}
\label{eq:neumann-energy-estimate}
E_{\delta,N}(u) \le C\,\|u\|_{H^1(\Omega)}^{2}
\qquad\text{for all }u\in H^1(\Omega).
\end{equation}
The key improvement over \eqref{eq:dirichlet-energy-estimate} is the
absence of the $\delta^{-1}$ factor.

\begin{proof}[Proof of Theorem~\ref{thm:neumann-error-l2}]
Let $\tilde u_{\delta,N}\in H^1(\Omega)$ be the exact minimizer of
$\tilde{\mathcal{L}}_{\delta,N}$ over $H^1(\Omega)$, i.e., the solution of
\eqref{eq:neumann-nonlocal} with $f$ and $g$ replaced by $\tilde f$ and
$\tilde g$. Applying Proposition~\ref{prop:neumann-wellposedness} and
\eqref{eq:neumann-precondition} to the equation satisfied by
$u_{\delta,N}-\tilde u_{\delta,N}$,
\begin{equation}
\label{eq:neumann-error-actual-and-real}
\|u_{\delta,N}-\tilde u_{\delta,N}\|_{H^1(\Omega)}
\le
C\!\left(\varepsilon_f+\frac{\varepsilon_g}{\sqrt\delta}\right).
\end{equation}
Together with the assumption on $\bar u_{\delta,N}$ in
Theorem~\ref{thm:neumann-error-l2},
\begin{equation}
\label{eq:neumann-bar-h1}
\|\bar u_{\delta,N}-\tilde u_{\delta,N}\|_{H^1(\Omega)}
\le
C\!\left(\varepsilon_f+\frac{\varepsilon_g}{\sqrt\delta}\right)+\varepsilon_u .
\end{equation}

By the second-order expansion at the minimizer $\tilde u_{\delta,N}$,
\[
\tilde{\mathcal{L}}_{\delta,N}(v)
= \tilde{\mathcal{L}}_{\delta,N}(\tilde u_{\delta,N})
+ \tfrac{1}{2}E_{\delta,N}(v-\tilde u_{\delta,N})
\qquad\text{for all }v\in H^1(\Omega).
\]
Applied to $v=\bar u_{\delta,N}$, and using \eqref{eq:neumann-energy-estimate}
and \eqref{eq:neumann-bar-h1},
\begin{equation}
\label{eq:neumann-final-bar}
\tilde{\mathcal{L}}_{\delta,N}(\bar u_{\delta,N})
\le
\tilde{\mathcal{L}}_{\delta,N}(\tilde u_{\delta,N})
+
C\!\left(\varepsilon_f^2+\frac{\varepsilon_g^2}{\delta}+\varepsilon_u^2\right).
\end{equation}
Since $p\ge p_u$, we have $\bar u_{\delta,N}\in\mathcal{V}_p^d$. Using the
minimality of $\tilde u_{\delta,p,N}$ over $\mathcal{V}_p^d$ and
\eqref{eq:opt-error-neumann},
\begin{equation}
\label{eq:neumann-final-1}
\tilde{\mathcal{L}}_{\delta,N}(u_{\delta,p,N})
\le
\tilde{\mathcal{L}}_{\delta,N}(\tilde u_{\delta,N})
+C\!\left(\varepsilon_f^2+\frac{\varepsilon_g^2}{\delta}+\varepsilon_u^2\right)
+\eta_{\mathrm{opt}}^{\,2}.
\end{equation}

By the coercivity of $E_{\delta,N}$
\cite[Lemma~3.1]{meng2026asymptotically},
\begin{equation}
\label{eq:neumann-l2-coercivity}
E_{\delta,N}(u) \ge C\,\|u\|_{L^2(\Omega)}^{2}
\qquad\text{for all }u\in L^2(\Omega).
\end{equation}
The same expansion applied to $u_{\delta,p,N}$ gives
\begin{equation}
\label{eq:neumann-final-2}
\begin{aligned}
\tilde{\mathcal{L}}_{\delta,N}(u_{\delta,p,N})
&= \tilde{\mathcal{L}}_{\delta,N}(\tilde u_{\delta,N})
+ \tfrac{1}{2}E_{\delta,N}(u_{\delta,p,N}-\tilde u_{\delta,N})
\\
&\ge
\tilde{\mathcal{L}}_{\delta,N}(\tilde u_{\delta,N})
+C\,\|u_{\delta,p,N}-\tilde u_{\delta,N}\|_{L^2(\Omega)}^{2}.
\end{aligned}
\end{equation}
Combining \eqref{eq:neumann-final-1} and \eqref{eq:neumann-final-2},
\[
\|u_{\delta,p,N}-\tilde u_{\delta,N}\|_{L^2(\Omega)}
\le
C\!\left(\varepsilon_f+\frac{\varepsilon_g}{\sqrt\delta}+\varepsilon_u+\eta_{\mathrm{opt}}\right).
\]
By \eqref{eq:neumann-error-actual-and-real} and the triangle inequality,
\begin{equation}
\label{eq:neumann-final-3}
\|u_{\delta,p,N}-u_{\delta,N}\|_{L^2(\Omega)}
\le
C\!\left(\varepsilon_f+\frac{\varepsilon_g}{\sqrt\delta}+\varepsilon_u+\eta_{\mathrm{opt}}\right),
\end{equation}
which proves \eqref{eq:neumann-estimate-1}. Estimate \eqref{eq:neumann-estimate-2}
follows from \eqref{eq:neumann-final-3} and
Proposition~\ref{prop:neumann-locallimit}.
\end{proof}

\begin{proof}[Proof of Theorem~\ref{thm:neumann-error-h1}]
By the gradient part of the same coercivity estimate
\cite[Lemma~3.1]{meng2026asymptotically},
\begin{equation}
\label{eq:neumann-sdelta-coercivity}
E_{\delta,N}(u) \ge C\,\|\nabla S_\delta u\|_{L^2(\Omega)}^{2}
\qquad\text{for all }u\in L^2(\Omega).
\end{equation}
From \eqref{eq:neumann-final-2} and \eqref{eq:neumann-sdelta-coercivity},
\begin{equation}
\label{eq:neumann-final-4}
\tilde{\mathcal{L}}_{\delta,N}(u_{\delta,p,N})
\ge
\tilde{\mathcal{L}}_{\delta,N}(\tilde u_{\delta,N})
+C\,\|\nabla S_\delta(u_{\delta,p,N}-\tilde u_{\delta,N})\|_{L^2(\Omega)}^{2}.
\end{equation}
Combining \eqref{eq:neumann-final-1} and \eqref{eq:neumann-final-4},
\begin{equation}
\label{eq:neumann-grad-tnn-to-tilde}
\|\nabla S_\delta(u_{\delta,p,N}-\tilde u_{\delta,N})\|_{L^2(\Omega)}
\le
C\!\left(\varepsilon_f+\frac{\varepsilon_g}{\sqrt\delta}+\varepsilon_u+\eta_{\mathrm{opt}}\right).
\end{equation}

It remains to estimate $\|\nabla\tilde u_{\delta,N}-\nabla S_\delta\tilde u_{\delta,N}\|_{L^2(\Omega)}$.
Since $\tilde u_{\delta,N}$ solves \eqref{eq:neumann-nonlocal} with $\tilde f$, $\tilde g$,
the decomposition \eqref{eq:neumann-decomposition} gives
\begin{align}
\tilde u_{\delta,N}(\xx) - S_\delta\tilde u_{\delta,N}(\xx)
&=
\frac{\delta^{2}}{w_\delta(\xx)}
\!\left(
\int_\Omega\!\bar R_\delta(\xx,\yy)\tilde f(\yy)\,\d\yy
+2\int_{\partial\Omega}\!\bar R_\delta(\xx,\yy)\tilde g(\yy)\,\d S_\yy
\right.
\notag \\
&\hspace{3cm}
\left.
-\int_\Omega\!\bar R_\delta(\xx,\yy)\tilde u_{\delta,N}(\yy)\,\d\yy
\right)
\notag \\
&=: I_1(\xx)+I_2(\xx)+I_3(\xx).
\label{eq:neumann-tilde-decomp}
\end{align}
By \eqref{eq:I1-est}--\eqref{eq:I3-est} in Appendix~\ref{appendix:neumann-wellposedness},
with $\tilde f$, $\tilde g$, $\tilde u_{\delta,N}$ in place of $f$, $g$, $u_{\delta,N}$,
and applying Proposition~\ref{prop:neumann-wellposedness} together with
\eqref{eq:neumann-precondition} to bound $\|\tilde u_{\delta,N}\|_{L^2(\Omega)}$,
\begin{align}
\|\nabla I_1\|_{L^2(\Omega)}
&\le C\delta\,\|\tilde f\|_{L^2(\Omega)}
\le C\delta\bigl(\|f\|_{L^2(\Omega)}+\varepsilon_f\bigr),
\notag \\
\|\nabla I_2\|_{L^2(\Omega)}
&\le C\sqrt\delta\,\|\tilde g\|_{L^2(\partial\Omega)}
\le C\sqrt\delta\bigl(\|g\|_{L^2(\partial\Omega)}+\varepsilon_g\bigr),
\notag \\
\|\nabla I_3\|_{L^2(\Omega)}
&\le C\delta\,\|\tilde u_{\delta,N}\|_{L^2(\Omega)}
\le C\delta\bigl(\|f\|_{L^2(\Omega)}+\varepsilon_f\bigr)
  +C\sqrt\delta\bigl(\|g\|_{L^2(\partial\Omega)}+\varepsilon_g\bigr).
\notag
\end{align}
Summing and absorbing the $\varepsilon_f$, $\varepsilon_g$ terms,
\begin{equation}
\label{eq:neumann-final-5}
\|\nabla\tilde u_{\delta,N}-\nabla S_\delta\tilde u_{\delta,N}\|_{L^2(\Omega)}
\le
C\!\left(\delta\,\|f\|_{L^2(\Omega)}
+\sqrt\delta\,\|g\|_{L^2(\partial\Omega)}\right)
+C\!\left(\delta\,\varepsilon_f+\sqrt\delta\,\varepsilon_g\right).
\end{equation}

By the triangle inequality,
\begin{align*}
&\|\nabla u_{\delta,N}-\nabla S_\delta u_{\delta,p,N}\|_{L^2(\Omega)}
\\
&\qquad\le
\|\nabla(u_{\delta,N}-\tilde u_{\delta,N})\|_{L^2(\Omega)}
+\|\nabla\tilde u_{\delta,N}-\nabla S_\delta\tilde u_{\delta,N}\|_{L^2(\Omega)}
\\
&\qquad\quad
+\|\nabla S_\delta(\tilde u_{\delta,N}-u_{\delta,p,N})\|_{L^2(\Omega)}.
\end{align*}
The first term is bounded by \eqref{eq:neumann-error-actual-and-real}, the
second by \eqref{eq:neumann-final-5}, and the third by
\eqref{eq:neumann-grad-tnn-to-tilde}. Since $\delta\le 1$, we have
$\delta\,\varepsilon_f\le \varepsilon_f$ and
$\sqrt\delta\,\varepsilon_g\le \varepsilon_g/\sqrt\delta$, so the
$\varepsilon$-terms in \eqref{eq:neumann-final-5} are absorbed into those
already present, yielding \eqref{eq:neumann-H1-estimate-1}. Estimate
\eqref{eq:neumann-H1-estimate-2} follows from \eqref{eq:neumann-H1-estimate-1}
and Proposition~\ref{prop:neumann-locallimit}.
\end{proof}

\section{Numerical tests}
\label{sec:numerical-tests}

In this section we present numerical experiments to illustrate the method
described in Section~\ref{sec:main-result}. All experiments were run on a
single NVIDIA RTX 3090 GPU with 24\,GB RAM.

\subsection{Experiments with tensor-product data}
\label{subsec:tensor-product-experiments}

We first test the TNN-based solver on problems whose source term and boundary data
are of tensor-product form. In this setting the data require no
preconditioning, so $\varepsilon_f=\varepsilon_g=0$ in
Theorems~\ref{thm:dirichlet-error}--\ref{thm:neumann-error-h1}. The only
nontrivial approximation error is $\varepsilon_u$, which arises from
approximating the nonlocal solution by a TNN.

\subsubsection{Dirichlet case}
\label{subsubsec:dirichlet-tensor}

We consider \eqref{eq:dirichlet-nonlocal} on $\Omega=[0,1]^d$ for
$d=3,5,10,20$, with exact local solution
\[
u_{\mathrm{loc}}(\xx)
= \sin(\pi x_1)\cdots\sin(\pi x_d) + x_1\cdots x_d .
\]
The corresponding source term and boundary data are
\begin{align*}
f(\xx)
&= -\Delta u_{\mathrm{loc}}
= d\pi^2\sin(\pi x_1)\cdots\sin(\pi x_d),
\qquad \xx\in\Omega,\\
g(\xx)
&= u_{\mathrm{loc}}\big|_{\partial\Omega}
= \prod_{i=1}^d x_i,
\qquad \xx\in\partial\Omega.
\end{align*}
Both $f$ and $g$ are already of tensor-product form, so the
preconditioning step of Section~\ref{subsec:dirichlet-workflow} is
unnecessary and we set $\tilde f=f$ and $\tilde g=g$ directly.

The network architecture, quadrature parameters, and training schedule for
each experiment are summarized in Table~\ref{tab:numerical-parameters}.
Here $p$ denotes the TNN separation rank; \#layers and \#neurons are the
depth and width of each one-dimensional subnetwork; $n_{\mathrm{sub}}$ and
$n_{\mathrm{pts}}$ are the number of subintervals and Gauss--Legendre
quadrature points per subinterval used to evaluate the loss integrals;
the columns ``Adam'' and ``L-BFGS'' record the number of training
iterations of the corresponding optimizers; and the last column reports
the total training time. As $\delta$ decreases, $n_{\mathrm{sub}}$ must be
increased to resolve the increasingly localized Gaussian factor
$\exp(-s^2(x_i-y_i)^2/(4\delta^2))$.

\begin{table}[htbp]
\centering
\caption{Network, quadrature, and training parameters for the
         tensor-product Dirichlet experiments.}
\label{tab:numerical-parameters}
\small
\setlength{\tabcolsep}{3pt}
\begin{tabular}{lcccccccc}
\hline
Experiment & $p$ & \#layers & \#neurons
           & $n_{\mathrm{sub}}$ & $n_{\mathrm{pts}}$
           & Adam & L-BFGS & Time\,(s) \\
\hline
$d=3,\;\delta=0.200$ & 40 & 3 & 150 &  40 & 16 & 3000 & 10 &  67.9 \\
$d=3,\;\delta=0.100$ & 40 & 3 & 150 &  40 & 16 & 3000 & 10 &  67.7 \\
$d=3,\;\delta=0.050$ & 40 & 3 & 150 &  64 & 16 & 5000 & 10 & 115.4 \\
$d=3,\;\delta=0.025$ & 40 & 3 & 150 & 128 & 16 & 5000 & 10 & 122.9 \\
$d=5,\;\delta=0.200$ & 40 & 3 & 200 &  40 & 16 & 3000 & 10 &  89.6 \\
$d=5,\;\delta=0.100$ & 40 & 3 & 200 &  40 & 16 & 3000 & 10 &  89.8 \\
$d=5,\;\delta=0.050$ & 40 & 3 & 200 &  64 & 16 & 5000 & 10 & 146.4 \\
$d=5,\;\delta=0.025$ & 40 & 3 & 200 & 128 & 16 & 5000 & 10 & 159.1 \\
$d=10,\;\delta=0.200$ & 60 & 3 & 200 &  40 & 16 & 3000 & 10 & 129.9 \\
$d=10,\;\delta=0.100$ & 60 & 3 & 200 &  40 & 16 & 3000 & 10 & 129.1 \\
$d=10,\;\delta=0.050$ & 60 & 3 & 200 &  64 & 16 & 6000 & 10 & 269.0 \\
$d=10,\;\delta=0.025$ & 60 & 3 & 200 & 128 & 16 & 6000 & 10 & 289.1 \\
$d=20,\;\delta=0.200$ & 80 & 3 & 250 &  40 & 16 & 3000 & 10 & 275.1 \\
$d=20,\;\delta=0.100$ & 80 & 3 & 250 &  40 & 16 & 3000 & 10 & 274.9 \\
$d=20,\;\delta=0.050$ & 80 & 3 & 250 &  64 & 16 & 6000 & 10 & 524.7 \\
$d=20,\;\delta=0.025$ & 80 & 3 & 250 & 128 & 16 & 6000 & 10 & 538.6 \\
\hline
\end{tabular}
\end{table}

Since the exact nonlocal solution $u_\delta$ of
\eqref{eq:dirichlet-nonlocal} is not available in closed form, we assess
the solution quality through the mean pointwise residual. Substituting
the TNN output $u_{\delta,p}$ into the left- and right-hand sides of
\eqref{eq:dirichlet-nonlocal}, the residual at a point $\xx$ is
\begin{align}
r(\xx;\,u_{\delta,p})
&:=
\frac{1}{\delta^{2}}
\int_\Omega R_\delta(\xx,\yy)
\bigl(u_{\delta,p}(\xx)-u_{\delta,p}(\yy)\bigr)\,\d\yy
+
\frac{2}{\delta}
\int_{\partial\Omega}
\bar{R}_\delta(\xx,\yy)\,u_{\delta,p}(\xx)\,\d S_\yy
\notag\\
&\quad
-\int_\Omega \bar{R}_\delta(\xx,\yy)f(\yy)\,\d\yy
-\frac{2}{\delta}
\int_{\partial\Omega}
\bar{R}_\delta(\xx,\yy)g(\yy)\,\d S_\yy .
\label{eq:dirichlet-residual}
\end{align}
We sample $N=1000$ points $\{\xx_i\}_{i=1}^N$ uniformly in $\Omega$ and
report the mean residual
\[
r_{\mathrm{mean}}
:= \frac{1}{N}\sum_{i=1}^N \bigl|r(\xx_i;\,u_{\delta,p})\bigr|.
\]
Since the local solution $u_{\mathrm{loc}}$ is known explicitly, the
errors $\|u_{\mathrm{loc}}-u_{\delta,p}\|_{L^2(\Omega)}$ and
$\|u_{\mathrm{loc}}-u_{\delta,p}\|_{H^1(\Omega)}$ are computed directly.

Figure~\ref{fig:dirichlet-residual} shows $r_{\mathrm{mean}}$ as a
function of the training iteration for $d=3,5,10,20$ at fixed
$\delta=0.05$. In all cases the residual decreases to a small value,
confirming that $u_{\delta,p}$ closely satisfies the nonlocal equation.
Figure~\ref{fig:dirichlet-errors} reports
$\|u_{\mathrm{loc}}-u_{\delta,p}\|_{L^2(\Omega)}$ and
$\|u_{\mathrm{loc}}-u_{\delta,p}\|_{H^1(\Omega)}$ as functions of $\delta$
for each dimension. The $L^2$ error and empirically also the $H^1$ error exhibit a numerical convergence rate of
order nearly $1$ in $\delta$, which is better than the $O(\sqrt{\delta})$
prediction of \eqref{eq:dirichlet-estimate-2}; a rigorous theoretical
explanation of this observation is left to future work.

\begin{figure}[htbp]
\centering
\begin{subfigure}[b]{0.48\textwidth}
  \centering
  \includegraphics[width=\textwidth]{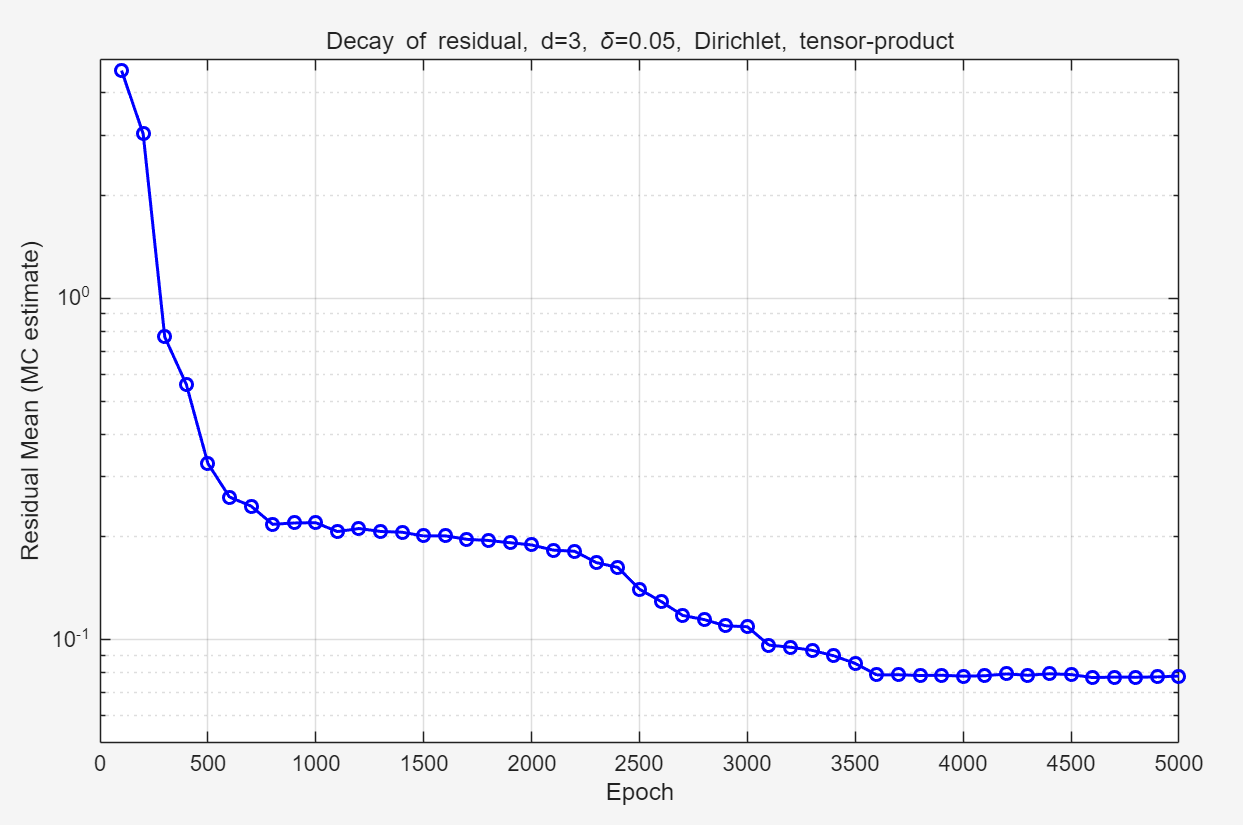}
  \caption{$d=3$}
\end{subfigure}
\hfill
\begin{subfigure}[b]{0.48\textwidth}
  \centering
  \includegraphics[width=\textwidth]{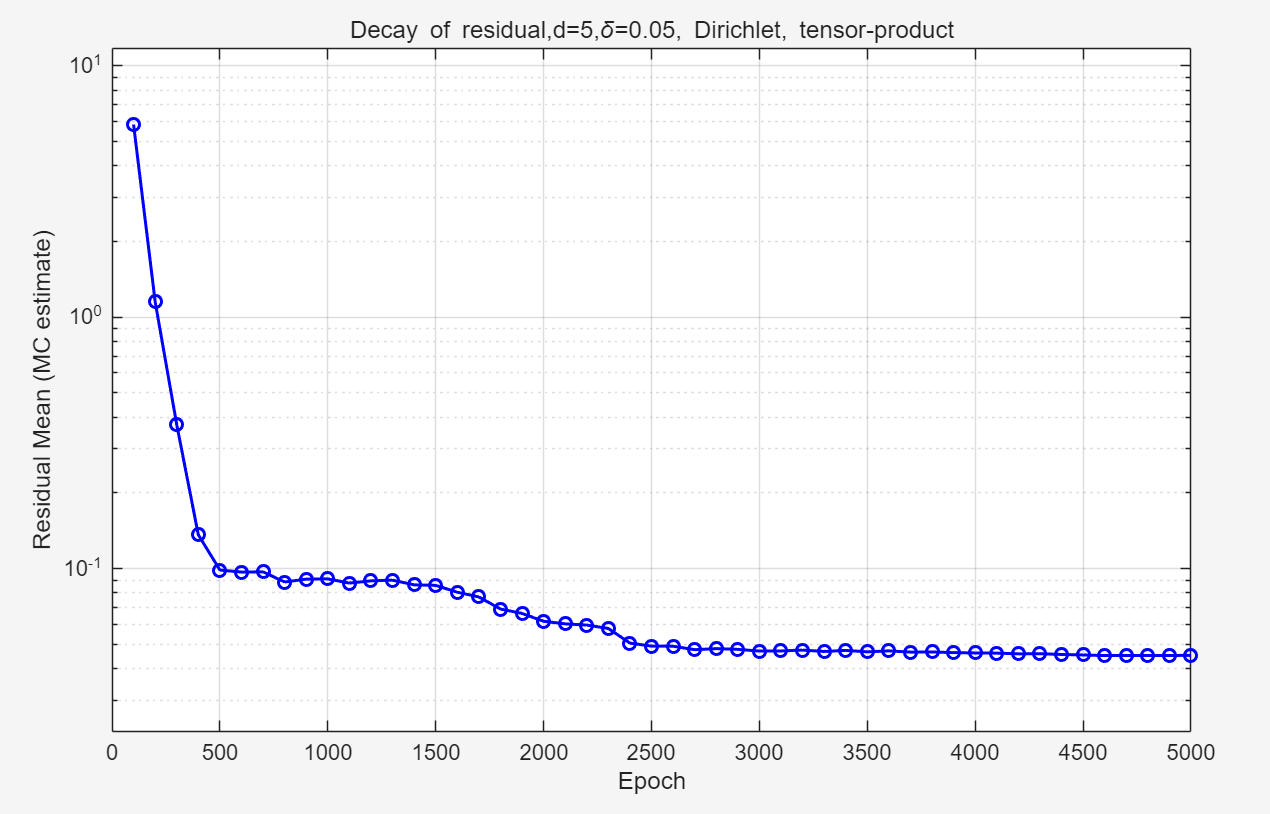}
  \caption{$d=5$}
\end{subfigure}

\vspace{0.5em}

\begin{subfigure}[b]{0.48\textwidth}
  \centering
  \includegraphics[width=\textwidth]{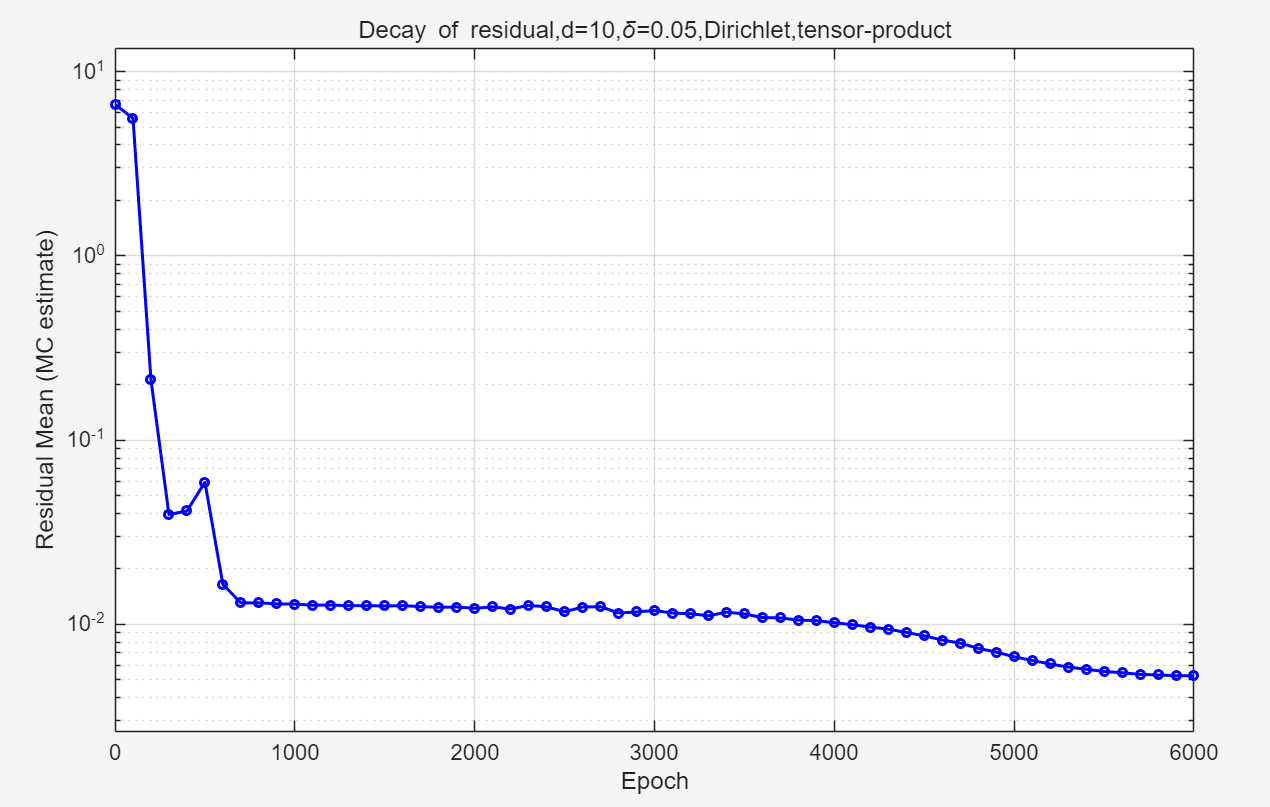}
  \caption{$d=10$}
\end{subfigure}
\hfill
\begin{subfigure}[b]{0.48\textwidth}
  \centering
  \includegraphics[width=\textwidth]{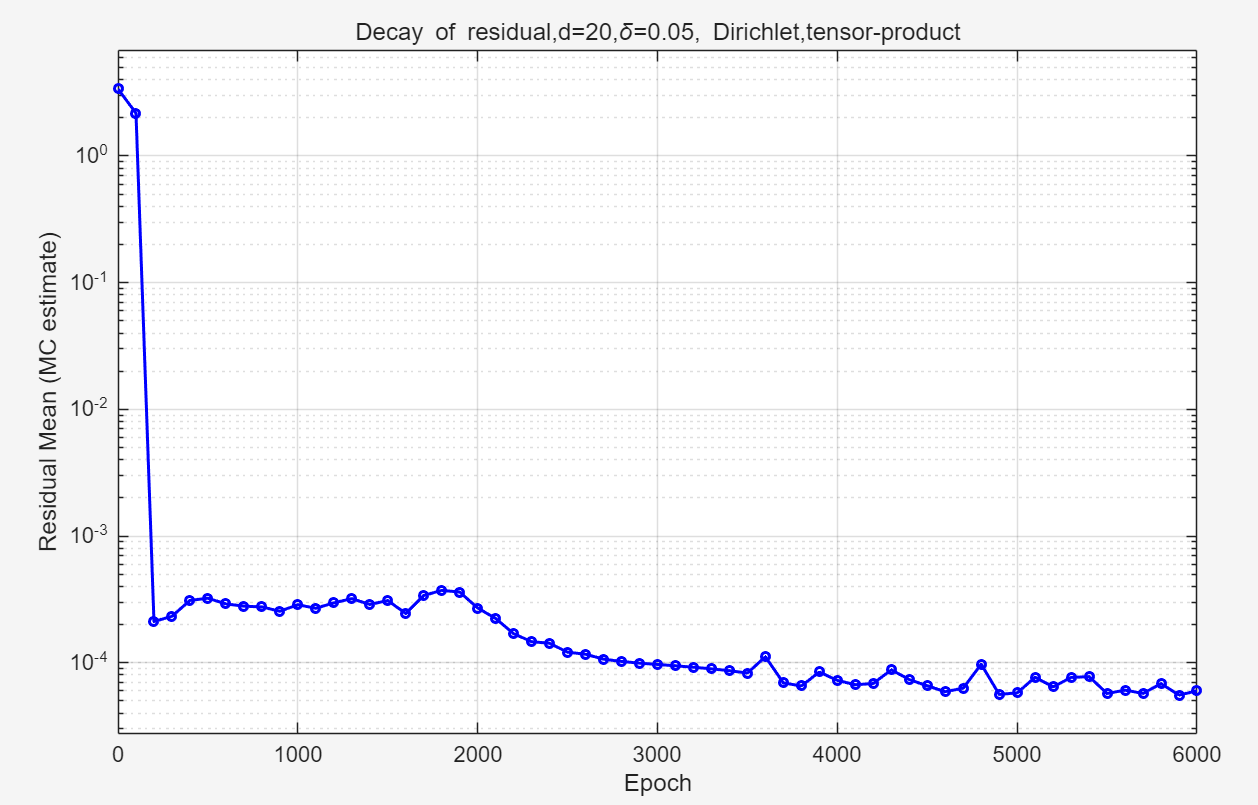}
  \caption{$d=20$}
\end{subfigure}
\caption{Mean pointwise residual $r_{\mathrm{mean}}$ versus training
         iteration for the Dirichlet case with tensor-product data
         ($\delta=0.05$, $d=3,5,10,20$).}
\label{fig:dirichlet-residual}
\end{figure}

\begin{figure}[htbp]
\centering
\begin{subfigure}[b]{0.48\textwidth}
  \centering
  \includegraphics[width=\textwidth]{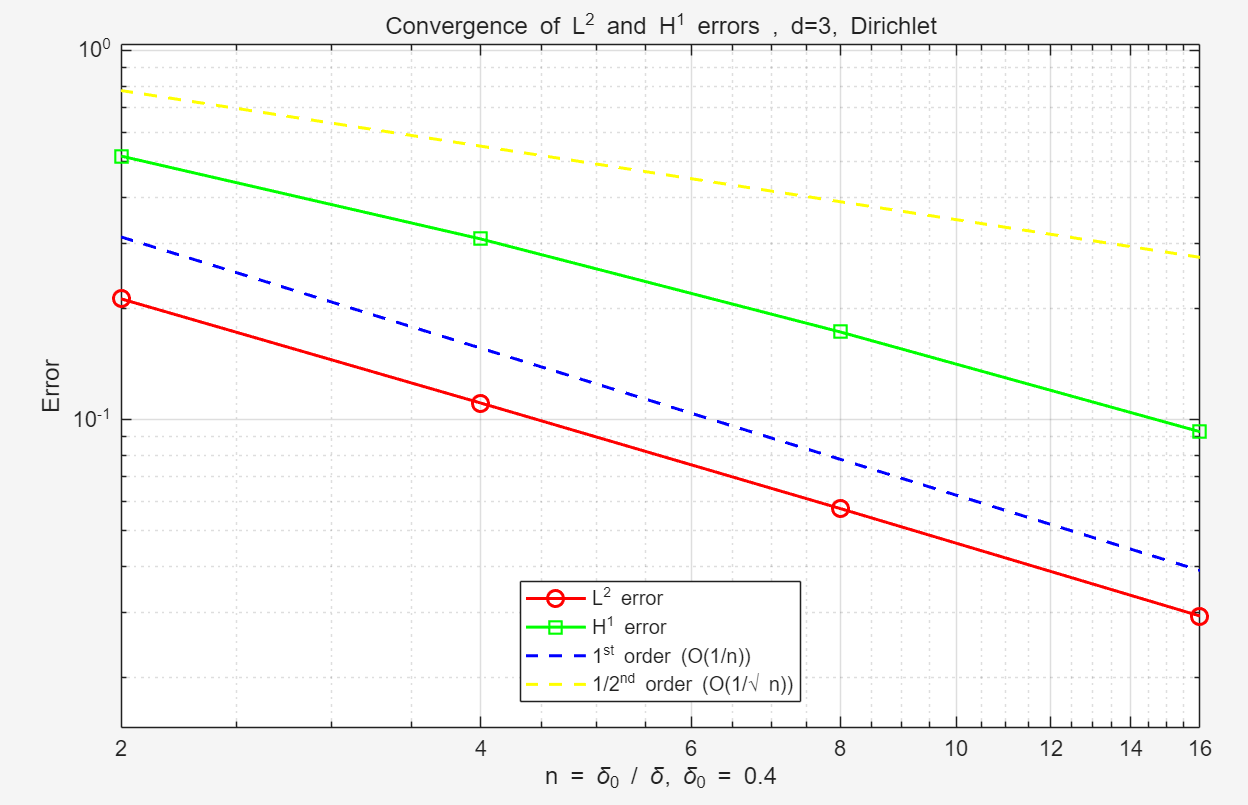}
  \caption{$d=3$}
\end{subfigure}
\hfill
\begin{subfigure}[b]{0.48\textwidth}
  \centering
  \includegraphics[width=\textwidth]{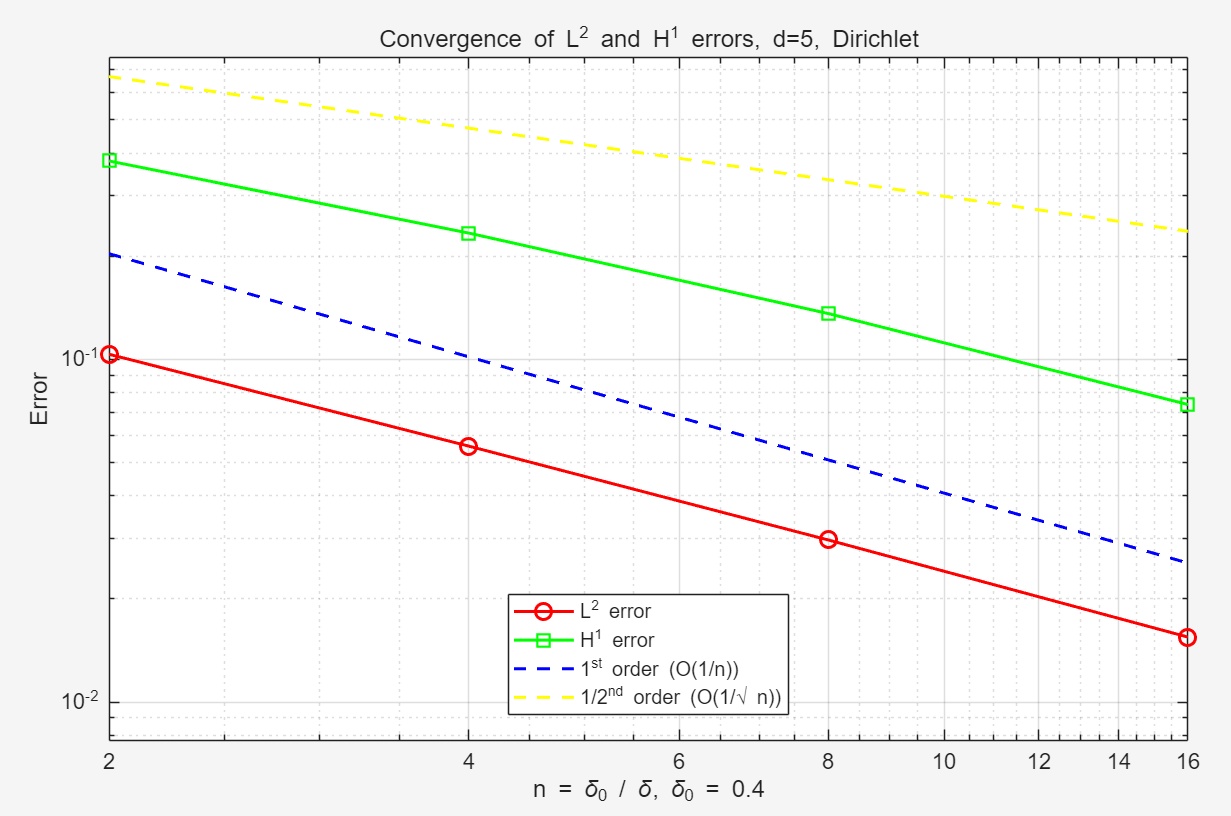}
  \caption{$d=5$}
\end{subfigure}

\vspace{0.5em}

\begin{subfigure}[b]{0.48\textwidth}
  \centering
  \includegraphics[width=\textwidth]{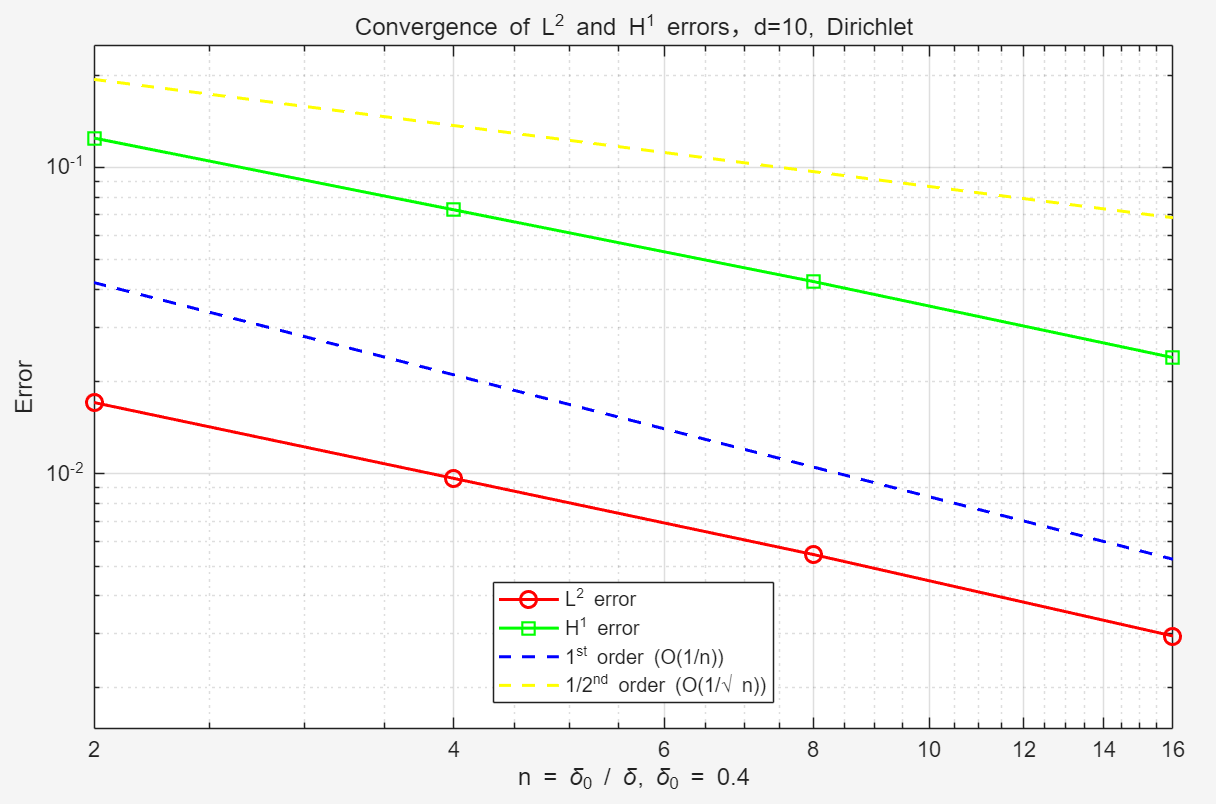}
  \caption{$d=10$}
\end{subfigure}
\hfill
\begin{subfigure}[b]{0.48\textwidth}
  \centering
  \includegraphics[width=\textwidth]{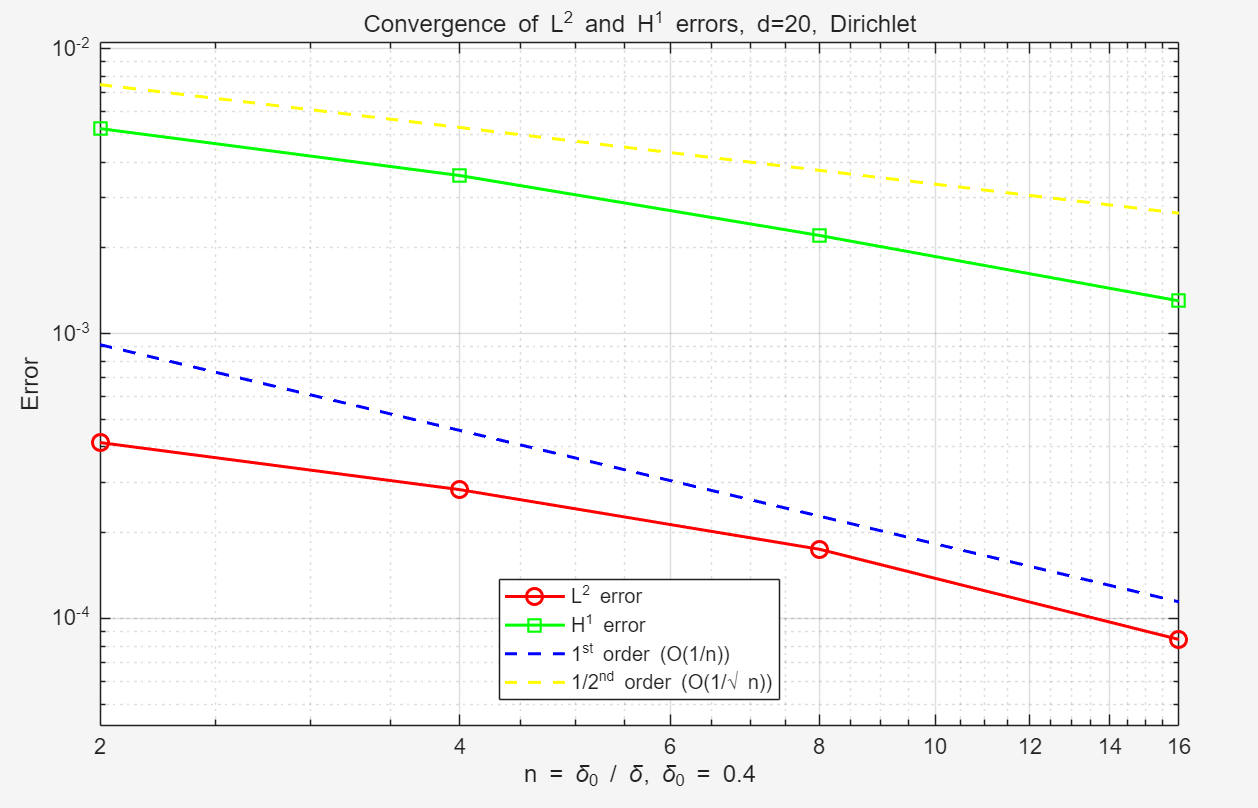}
  \caption{$d=20$}
\end{subfigure}
\caption{$L^2$ and $H^1$ errors between the local solution
         $u_{\mathrm{loc}}$ and the TNN output $u_{\delta,p}$ as
         functions of $\delta$ for the Dirichlet case with
         tensor-product data ($d=3,5,10,20$).}
\label{fig:dirichlet-errors}
\end{figure}

\subsubsection{Neumann case}
\label{subsubsec:neumann-tensor}

We now consider \eqref{eq:neumann-nonlocal} on $\Omega=[0,1]^d$ for
$d=3,5,10,20$ with exact local solution
\[
u_{\mathrm{loc},N}(\xx)
= \cos(\pi x_1)\cdots\cos(\pi x_d) + x_1\cdots x_d .
\]
The corresponding source term and boundary data are
\[
f(\xx)
= -\Delta u_{\mathrm{loc},N} + u_{\mathrm{loc},N}
= (1+d\pi^2)\cos(\pi x_1)\cdots\cos(\pi x_d) + x_1\cdots x_d,
\quad \xx\in\Omega,
\]
and
\[
g(\xx)
= \frac{\partial u_{\mathrm{loc},N}}{\partial\mathbf{n}}
=
\begin{cases}
-\,\displaystyle\prod_{j\ne i}x_j,
& \xx\in F_{i,0}:=\{\xx\in\partial\Omega : x_i=0\},\\[2mm]
\displaystyle\prod_{j\ne i}x_j,
& \xx\in F_{i,1}:=\{\xx\in\partial\Omega : x_i=1\},
\end{cases}
\qquad i=1,\ldots,d.
\]
As in the Dirichlet case, we define the residual of
\eqref{eq:neumann-nonlocal} by
\begin{align}
r_{N}(\xx;\,u_{\delta,p,N})
&:=
\frac{1}{\delta^{2}}
\int_\Omega R_\delta(\xx,\yy)
\bigl(u_{\delta,p,N}(\xx)-u_{\delta,p,N}(\yy)\bigr)\,\d\yy
+
\int_\Omega
\bar{R}_\delta(\xx,\yy)\,u_{\delta,p,N}(\yy)\,\d\yy
\notag\\
&\quad
-\int_\Omega \bar{R}_\delta(\xx,\yy)f(\yy)\,\d\yy
\notag\\
&\quad
-2\int_{\partial\Omega}
\bar{R}_\delta(\xx,\yy)g(\yy)\,\d S_\yy ,
\label{eq:neumann-residual}
\end{align}
and the corresponding mean residual
\[
r_{\mathrm{mean},N}
:= \frac{1}{N}\sum_{i=1}^N \bigl|r_{N}(\xx_i;\,u_{\delta,p,N})\bigr|
\]
with $N=1000$ uniform samples in $\Omega$. The hyperparameters are taken
to be the same as in the Dirichlet experiments
(Table~\ref{tab:numerical-parameters}). The numerical results are
displayed in Figures~\ref{fig:neumann-residual}
and~\ref{fig:neumann-errors}. Both
$\|u_{\mathrm{loc},N}-u_{\delta,p,N}\|_{L^2(\Omega)}$ and
$\|u_{\mathrm{loc},N}-u_{\delta,p,N}\|_{H^1(\Omega)}$ exhibit a numerical
convergence rate of order nearly $1$ in $\delta$, which is again better
than the prediction \eqref{eq:neumann-H1-estimate-2}.

\begin{figure}[htbp]
\centering
\begin{subfigure}[b]{0.48\textwidth}
  \centering
  \includegraphics[width=\textwidth]{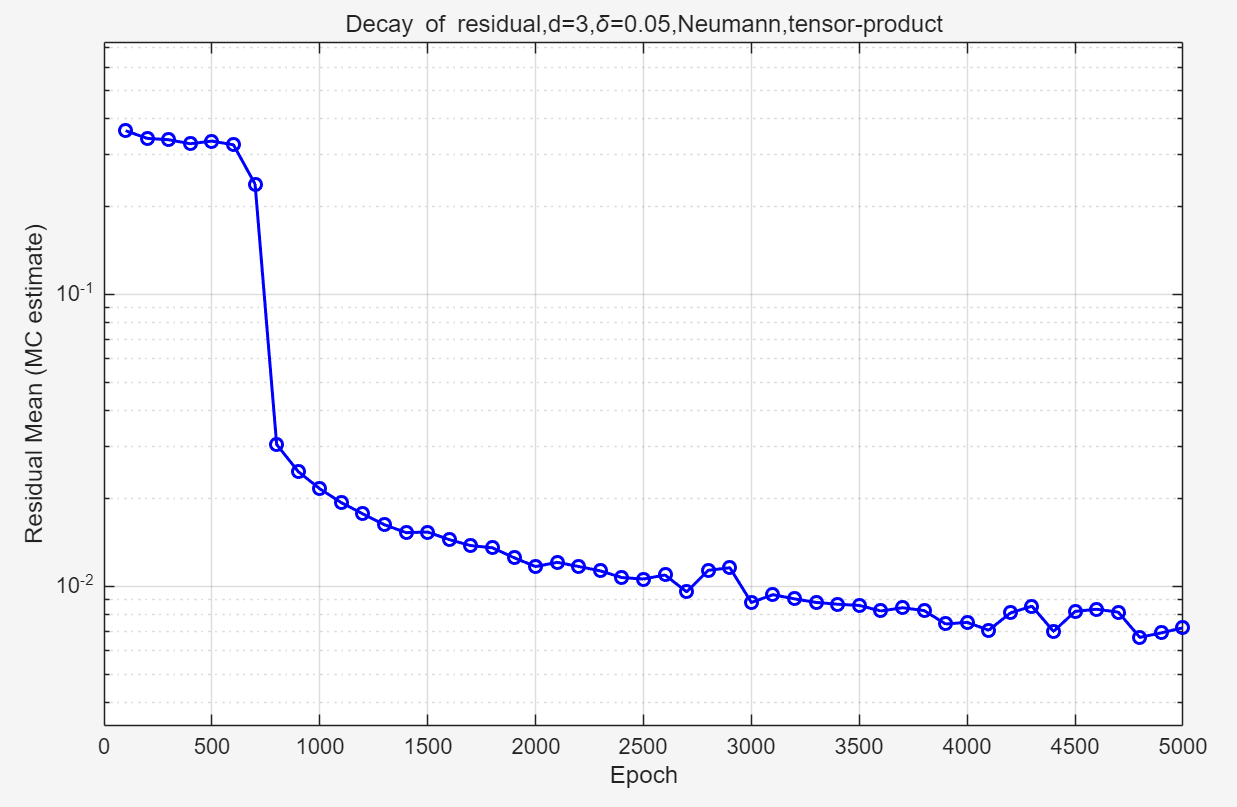}
  \caption{$d=3$}
\end{subfigure}
\hfill
\begin{subfigure}[b]{0.48\textwidth}
  \centering
  \includegraphics[width=\textwidth]{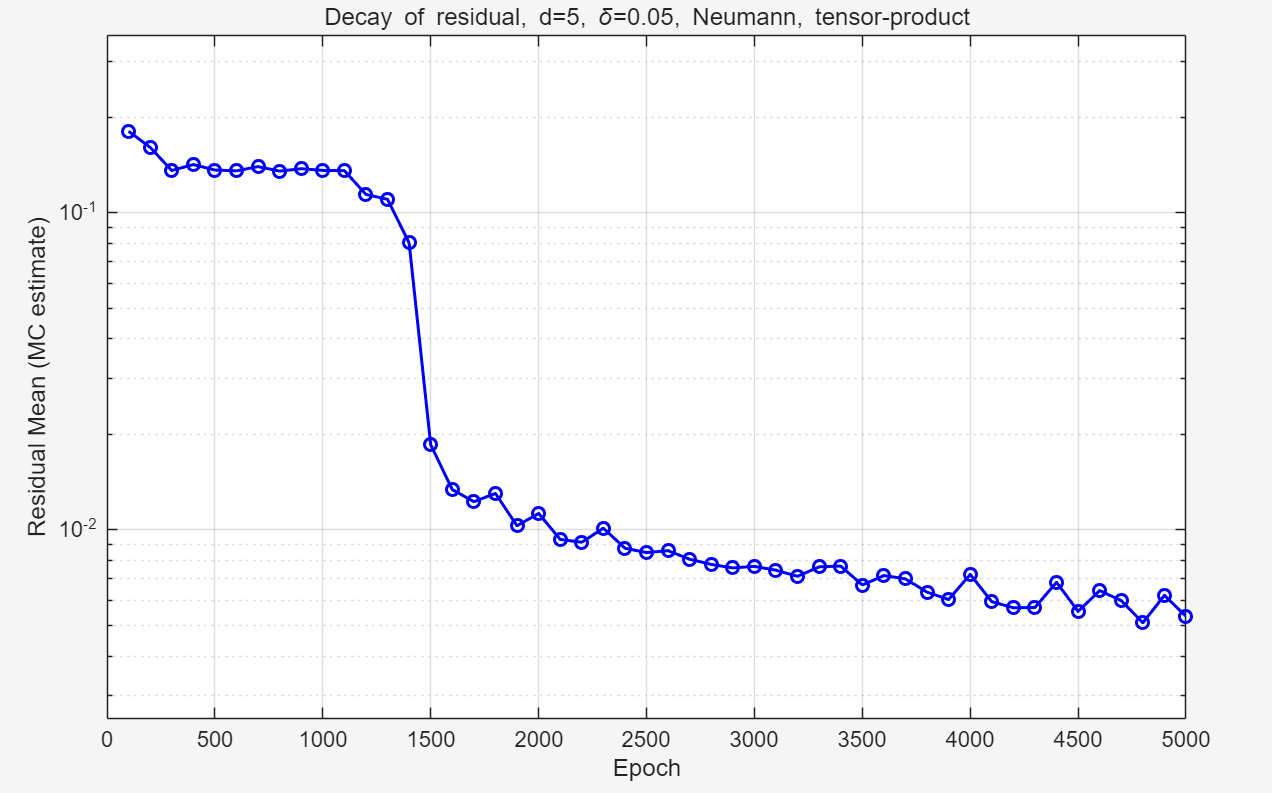}
  \caption{$d=5$}
\end{subfigure}

\vspace{0.5em}

\begin{subfigure}[b]{0.48\textwidth}
  \centering
  \includegraphics[width=\textwidth]{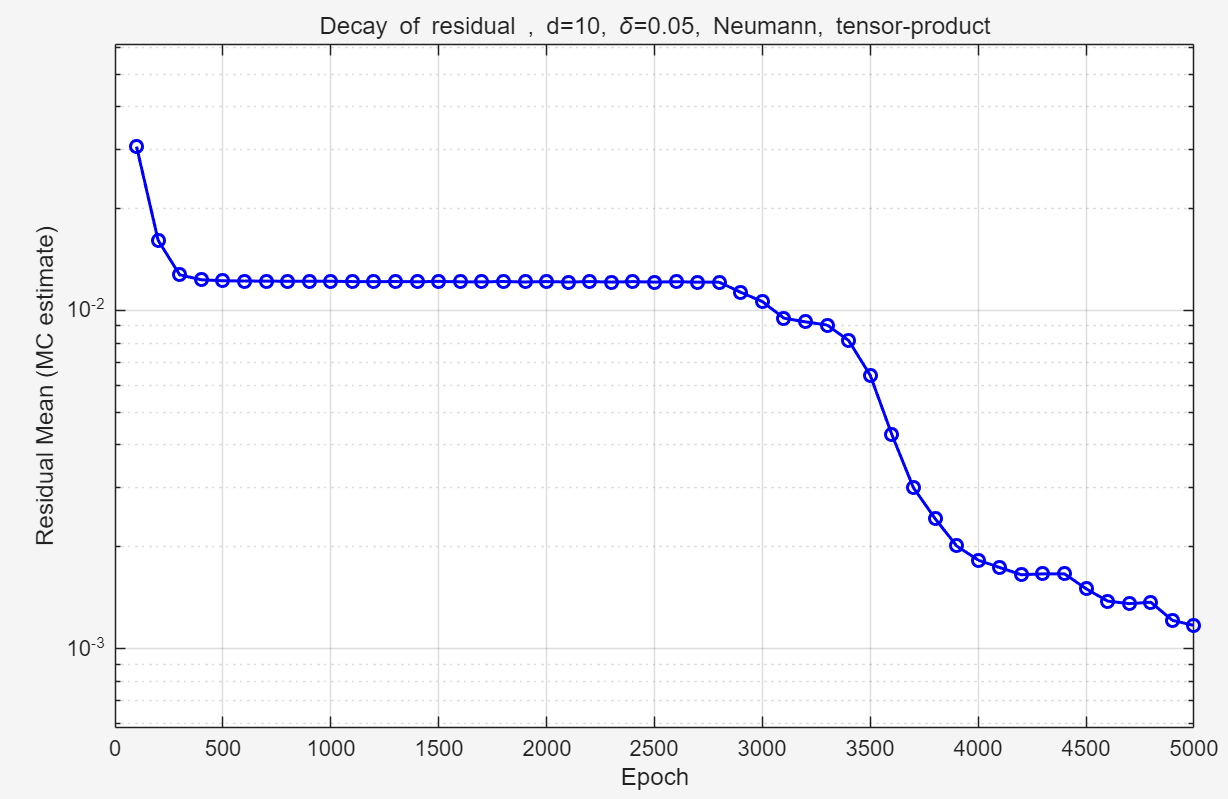}
  \caption{$d=10$}
\end{subfigure}
\hfill
\begin{subfigure}[b]{0.48\textwidth}
  \centering
  \includegraphics[width=\textwidth]{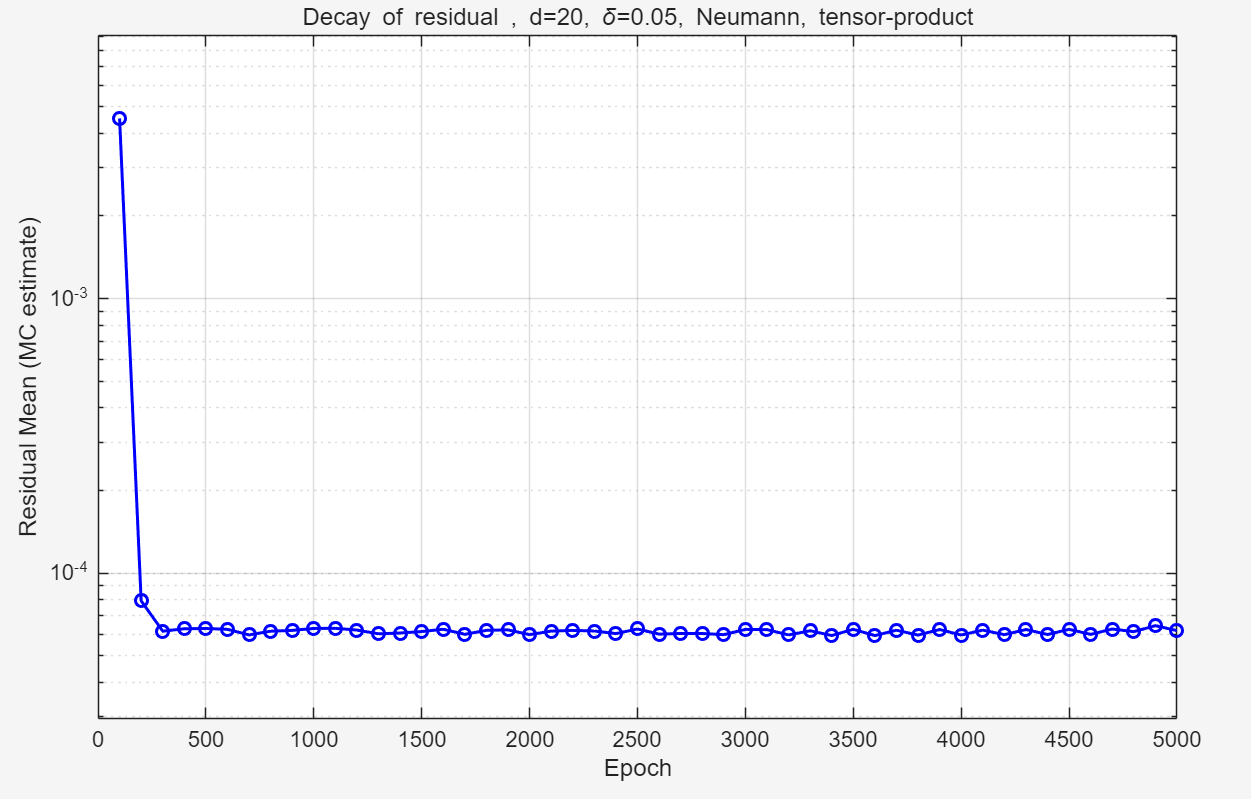}
  \caption{$d=20$}
\end{subfigure}
\caption{Mean pointwise residual $r_{\mathrm{mean},N}$ versus training
         iteration for the Neumann case with tensor-product data
         ($\delta=0.05$, $d=3,5,10,20$).}
\label{fig:neumann-residual}
\end{figure}

\begin{figure}[htbp]
\centering
\begin{subfigure}[b]{0.48\textwidth}
  \centering
  \includegraphics[width=\textwidth]{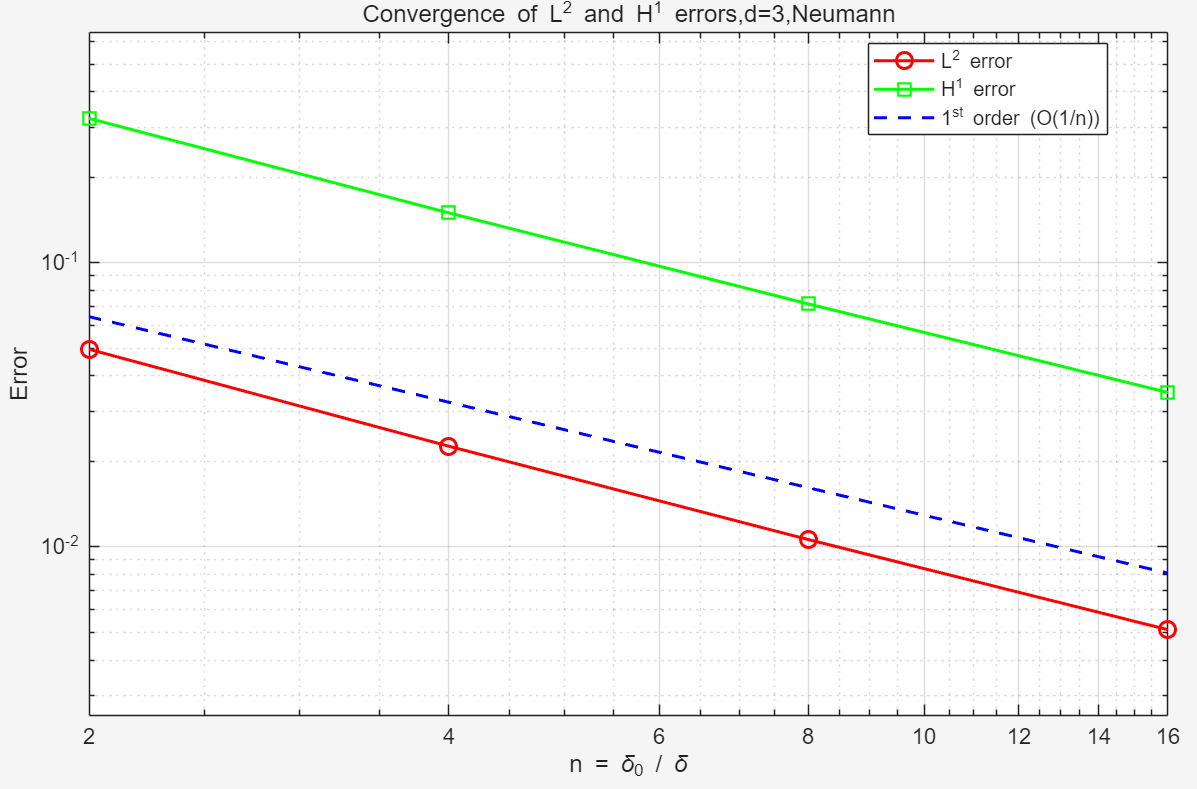}
  \caption{$d=3$}
\end{subfigure}
\hfill
\begin{subfigure}[b]{0.48\textwidth}
  \centering
  \includegraphics[width=\textwidth]{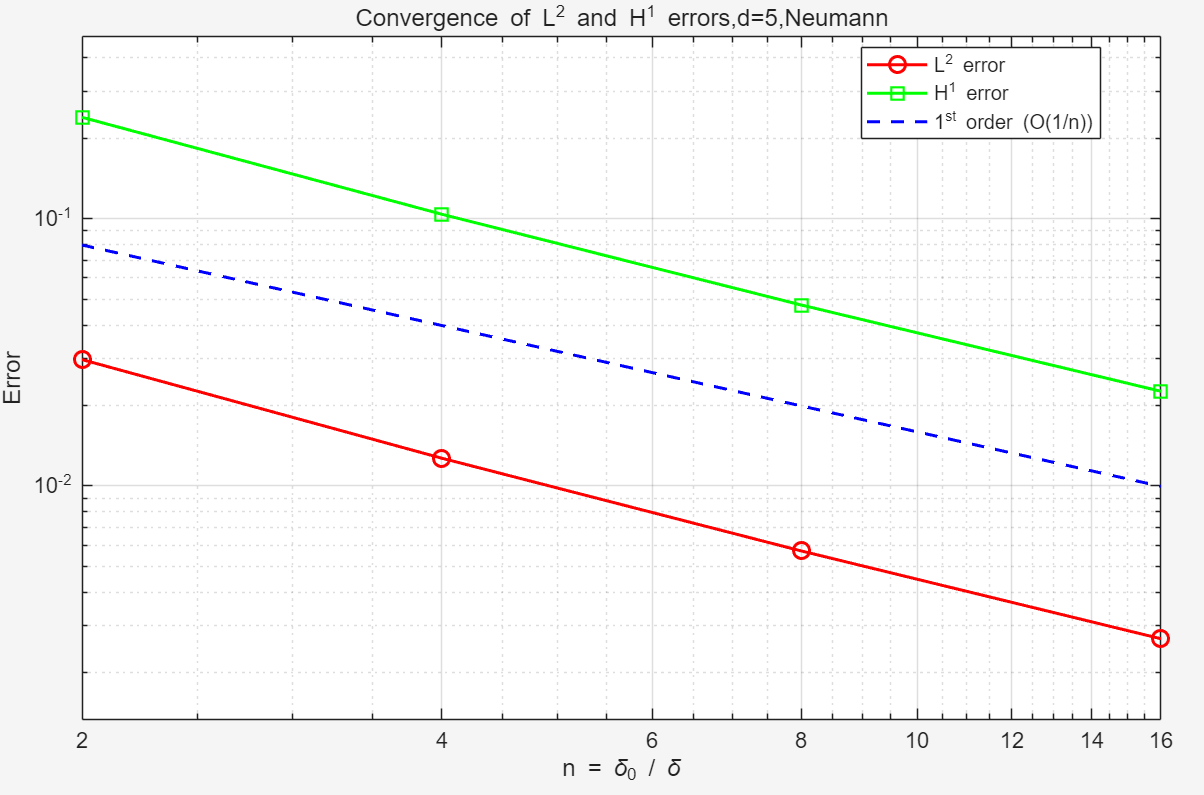}
  \caption{$d=5$}
\end{subfigure}

\vspace{0.5em}

\begin{subfigure}[b]{0.48\textwidth}
  \centering
  \includegraphics[width=\textwidth]{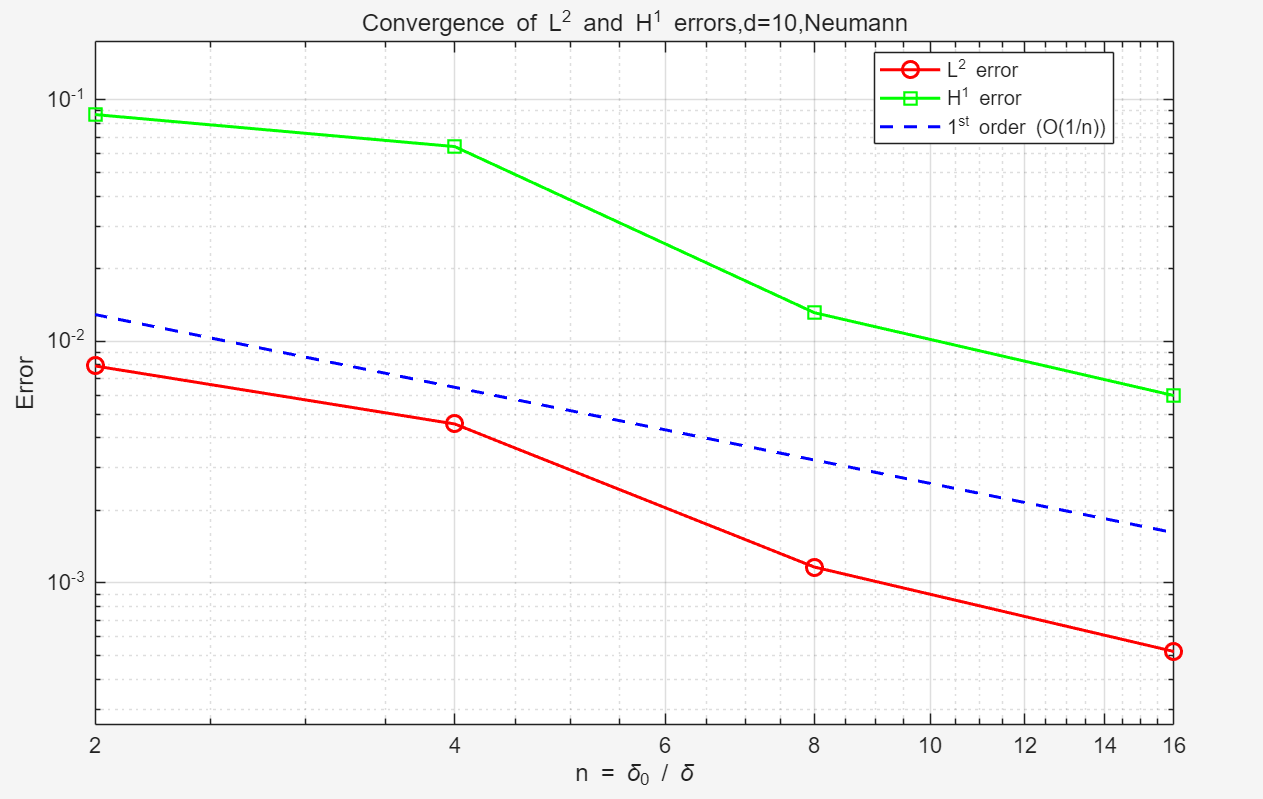}
  \caption{$d=10$}
\end{subfigure}
\hfill
\begin{subfigure}[b]{0.48\textwidth}
  \centering
  \includegraphics[width=\textwidth]{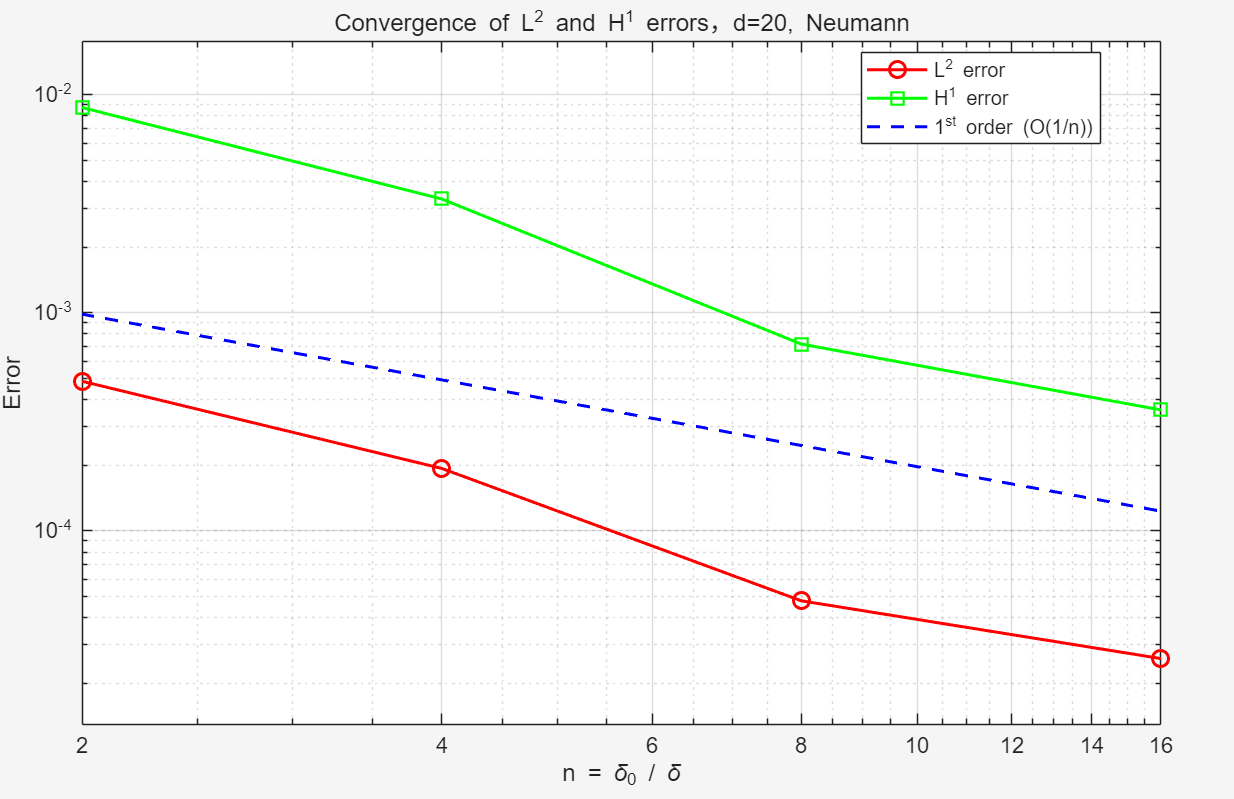}
  \caption{$d=20$}
\end{subfigure}
\caption{$L^2$ and $H^1$ errors between the local solution
         $u_{\mathrm{loc},N}$ and the TNN output $u_{\delta,p,N}$ as
         functions of $\delta$ for the Neumann case with tensor-product
         data ($d=3,5,10,20$).}
\label{fig:neumann-errors}
\end{figure}

\subsection{Experiments with non-tensor-product data}
\label{subsec:nontensor-experiments}

We now consider problems whose exact solution is not of tensor-product
form, in which case all of $\varepsilon_f$, $\varepsilon_g$ and
$\varepsilon_u$ are nonzero in
Theorems~\ref{thm:dirichlet-error}--\ref{thm:neumann-error-h1}. We work on
$\Omega=[0,1]^d$ with the exact solution
\[
u_{\mathrm{loc}}(\xx) = u_{\mathrm{loc},N}(\xx)
= \exp\!\left(\frac{1}{d}\sum_{i=1}^d x_i x_{i+1}\right),
\qquad x_{d+1}:=x_1.
\]
For convenience, we write
$S(\xx):=\frac{1}{d}\sum_{i=1}^d x_i x_{i+1}$, so that
$u_{\mathrm{loc}}=u_{\mathrm{loc},N}=e^{S}$. We consider the Neumann
problem first, in which the source term is given by
\[
f_N(\xx)
= -\Delta u_{\mathrm{loc},N}(\xx) + u_{\mathrm{loc},N}(\xx)
= u_{\mathrm{loc},N}\bigl(1-|\nabla S|^{2}\bigr) ,
\]
where the last equality uses the identity
$\Delta(e^{S}) = e^{S}\bigl(|\nabla S|^{2}+\Delta S\bigr)$ together with
$\Delta S=0$, since each summand $x_i x_{i+1}$ is harmonic. As described
in Section~\ref{sec:main-result}, we first approximate $f_N$ by a TNN
surrogate $\tilde f_N$. We begin by examining the effect of the
hyperparameters on the quality of this approximation.

\subsubsection{Effects of the hyperparameters}
\label{subsubsec:hyperparam}

For two functions $f_N$ and $\tilde f_N$, we randomly sample
$N_{\mathrm{approx}}=20000$ points
$\xx_1,\ldots,\xx_{N_{\mathrm{approx}}}$ in $\Omega$ and use the
following two indicators to measure the approximation quality:
\begin{align*}
\mathrm{RMSE}
&:= \sqrt{\frac{1}{N_{\mathrm{approx}}}
\sum_{j=1}^{N_{\mathrm{approx}}}
\bigl(\tilde f_N(\xx_j)-f_N(\xx_j)\bigr)^{2}},\\
\mathrm{rel.\,RMSE}
&:= \frac{\mathrm{RMSE}}
{\sqrt{\frac{1}{N_{\mathrm{approx}}}
\sum_{j=1}^{N_{\mathrm{approx}}}f_N(\xx_j)^{2}}}.
\end{align*}
We investigate the effects of the four hyperparameters: the separation
rank $p$, the dimension $d$, the depth (\#layers) and the width
(\#neurons) of each one-dimensional subnetwork. The parameter settings of
the four scans are summarized in Table~\ref{tab:e5-hyperparam-settings}.
In each scan only the corresponding hyperparameter is varied, while the
others are fixed.

\begin{table}[htbp]
    \centering
    \caption{Parameter settings for the four hyperparameter scans.}
    \label{tab:e5-hyperparam-settings}
    \small
    \setlength{\tabcolsep}{4pt}
    \begin{tabular}{|c|c|c|c|c|}
        \hline
        scan parameter & rank $p$ & dimension $d$ & \#layers & \#neurons \\
        \hline
        $p$ & $10,20,40,60,80,120$ & $5$ & $2$ & $100$ \\
        \hline
        $d$ & $60$ & $3,5,8,10,15,20$ & $2$ & $100$ \\
        \hline
        \#neurons & $60$ & $5$ & $2$ & $30,60,100,150,200$ \\
        \hline
        \#layers & $60$ & $5$ & $1,2,3,4,5$ & $100$ \\
        \hline
    \end{tabular}
\end{table}

\begin{table}[htbp]
    \centering
    \caption{Scan over the rank $p$, with target $f_N=u_{\mathrm{loc},N}(1-|\nabla S|^2)$.}
    \label{tab:rank-scan}
    \begin{tabular}{|r|r|c|c|c|}
        \hline
        rank $p$ & $n_{\text{params}}$ & RMSE & rel.\ RMSE & time (s) \\
        \hline
         10 &  56{,}551 & $1.919\times 10^{-3}$ & $1.966\times 10^{-3}$ &  78.7 \\
        \hline
         20 &  61{,}601 & $1.519\times 10^{-3}$ & $1.556\times 10^{-3}$ &  97.2 \\
        \hline
         40 &  71{,}701 & $1.489\times 10^{-3}$ & $1.526\times 10^{-3}$ &  92.3 \\
        \hline
         60 &  81{,}801 & $1.449\times 10^{-3}$ & $1.484\times 10^{-3}$ &  95.0 \\
        \hline
         80 &  91{,}901 & $1.578\times 10^{-3}$ & $1.616\times 10^{-3}$ & 103.2 \\
        \hline
        120 & 112{,}101 & $1.769\times 10^{-3}$ & $1.812\times 10^{-3}$ & 121.3 \\
        \hline
    \end{tabular}
\end{table}

\begin{table}[htbp]
    \centering
    \caption{Scan over the dimension $d$, with target $f_N=u_{\mathrm{loc},N}(1-|\nabla S|^2)$.}
    \label{tab:dimension-scan}
    \begin{tabular}{|r|r|c|c|c|}
        \hline
        dimension $d$ & $n_{\text{params}}$ & RMSE & rel.\ RMSE & time (s) \\
        \hline
         3 &  49{,}081 & $4.019\times 10^{-3}$ & $5.218\times 10^{-3}$ &  58.1 \\
        \hline
         5 &  81{,}801 & $1.457\times 10^{-3}$ & $1.493\times 10^{-3}$ &  96.8 \\
        \hline
         8 & 130{,}881 & $1.212\times 10^{-3}$ & $1.104\times 10^{-3}$ & 151.8 \\
        \hline
        10 & 163{,}601 & $1.541\times 10^{-3}$ & $1.355\times 10^{-3}$ & 188.7 \\
        \hline
        15 & 245{,}401 & $1.450\times 10^{-3}$ & $1.221\times 10^{-3}$ & 280.5 \\
        \hline
        20 & 327{,}201 & $1.881\times 10^{-3}$ & $1.551\times 10^{-3}$ & 373.7 \\
        \hline
    \end{tabular}
\end{table}

\begin{table}[htbp]
    \centering
    \caption{Scan over the number of hidden neurons, with target $f_N=u_{\mathrm{loc},N}(1-|\nabla S|^2)$.}
    \label{tab:hidden-neurons-scan}
    \begin{tabular}{|r|r|c|c|c|}
        \hline
        hidden neurons & $n_{\text{params}}$ & RMSE & rel.\ RMSE & time (s) \\
        \hline
         30 &  14{,}251 & $1.831\times 10^{-3}$ & $1.876\times 10^{-3}$ &  35.0 \\
        \hline
         60 &  37{,}201 & $1.566\times 10^{-3}$ & $1.604\times 10^{-3}$ &  46.8 \\
        \hline
        100 &  81{,}801 & $1.545\times 10^{-3}$ & $1.582\times 10^{-3}$ &  94.4 \\
        \hline
        150 & 160{,}051 & $1.398\times 10^{-3}$ & $1.432\times 10^{-3}$ & 185.6 \\
        \hline
        200 & 263{,}301 & $1.429\times 10^{-3}$ & $1.464\times 10^{-3}$ & 237.4 \\
        \hline
    \end{tabular}
\end{table}

\begin{table}[htbp]
    \centering
    \caption{Scan over the number of hidden layers, with target $f_N=u_{\mathrm{loc},N}(1-|\nabla S|^2)$.}
    \label{tab:hidden-layers-scan}
    \begin{tabular}{|r|r|c|c|c|}
        \hline
        hidden layers & $n_{\text{params}}$ & RMSE & rel.\ RMSE & time (s) \\
        \hline
         1 &  31{,}301 & $3.365\times 10^{-3}$ & $3.447\times 10^{-3}$ &  42.0 \\
        \hline
         2 &  81{,}801 & $1.419\times 10^{-3}$ & $1.454\times 10^{-3}$ &  97.3 \\
        \hline
         3 & 132{,}301 & $1.499\times 10^{-3}$ & $1.536\times 10^{-3}$ & 151.9 \\
        \hline
         4 & 182{,}801 & $1.542\times 10^{-3}$ & $1.579\times 10^{-3}$ & 207.4 \\
        \hline
         5 & 233{,}301 & $1.611\times 10^{-3}$ & $1.650\times 10^{-3}$ & 262.6 \\
        \hline
    \end{tabular}
\end{table}

Tables~\ref{tab:rank-scan}--\ref{tab:hidden-layers-scan} show that a
larger model size does not always lead to better approximation. In the
rank scan, the RMSE decreases from $1.919\times10^{-3}$ at $p=10$ to
$1.449\times10^{-3}$ at $p=60$, but increases slightly when $p$ is
further enlarged. In the width scan, the smallest error is obtained at
\#neurons $=150$, with RMSE $1.398\times10^{-3}$, while using $200$
neurons gives no further improvement. In the depth scan, two hidden
layers achieve the smallest error; deeper networks increase both the
number of parameters and the training time without improving the
accuracy. For the dimension scan, only $d$ is varied, while $p=60$,
\#layers $=2$ and \#neurons $=100$ are kept fixed. The largest relative
RMSE appears at $d=3$ ($5.218\times10^{-3}$), and the error remains of
order $10^{-3}$ for $d\ge 5$. The best relative RMSE is obtained at $d=8$,
$1.104\times10^{-3}$. As $d$ increases, the parameter count and training
time grow steadily, from $49{,}081$ parameters and $58.1$ seconds at
$d=3$ to $327{,}201$ parameters and $373.7$ seconds at $d=20$. Thus,
under the fixed baseline architecture, the TNN approximation remains
stable in higher dimensions, at increasing computational cost.

\subsubsection{Neumann case}
\label{subsubsec:neumann-nontensor}

Having examined the effects of $p$, $d$, \#layers, and \#neurons, we now
turn to the Neumann problem with the non-tensor-product data introduced
above. The hyperparameters used in the experiments are summarized in
Table~\ref{tab:numerical-parameters-nontensor}.

\begin{table}[htbp]
\centering
\caption{Network, quadrature, and approximation parameters for the
         non-tensor-product Neumann experiments.}
\label{tab:numerical-parameters-nontensor}
\small
\setlength{\tabcolsep}{3pt}
\begin{tabular}{lccccccc}
\hline
Experiment & $p_u$ & \#layers & \#neurons
           & $n_{\mathrm{sub}}$ & $n_{\mathrm{pts}}$
           & $p_f$ & $p_g$ \\
\hline
$d=3,\;\delta=0.200$ & 30 & 2 & 80 &  40 & 20 & 30 & 20 \\
$d=3,\;\delta=0.100$ & 30 & 2 & 80 &  80 & 20 & 30 & 20 \\
$d=3,\;\delta=0.050$ & 30 & 2 & 80 & 160 & 20 & 30 & 20 \\
$d=3,\;\delta=0.025$ & 40 & 2 & 80 & 320 & 20 & 30 & 20 \\
$d=5,\;\delta=0.200$ & 50 & 2 & 100 &  40 & 20 & 50 & 30 \\
$d=5,\;\delta=0.100$ & 50 & 2 & 100 &  80 & 20 & 50 & 30 \\
$d=5,\;\delta=0.050$ & 50 & 2 & 100 & 160 & 20 & 50 & 30 \\
$d=5,\;\delta=0.025$ & 50 & 2 & 100 & 320 & 20 & 50 & 30 \\
$d=10,20,\;\delta=0.200$ & 80 & 3 & 120 &  40 & 20 & 80 & 60 \\
$d=10,20,\;\delta=0.100$ & 80 & 3 & 120 &  80 & 20 & 80 & 60 \\
$d=10,20,\;\delta=0.050$ & 80 & 3 & 120 & 160 & 20 & 80 & 60 \\
$d=10,20,\;\delta=0.025$ & 80 & 3 & 120 & 320 & 20 & 80 & 60 \\
\hline
\end{tabular}
\end{table}

Here $p_u$, $p_f$ and $p_g$ are the TNN separation ranks of the
approximations of $u$, $f$ and $g$, respectively. The columns \#layers
and \#neurons denote the depth and width of each one-dimensional
subnetwork used in the approximation of $f$, $g$ and $u$. The quantities
$n_{\mathrm{sub}}$ and $n_{\mathrm{pts}}$ have the same meaning as in
Section~\ref{subsec:tensor-product-experiments}. Before turning to the
main numerical tests, we first examine the effect of the quadrature
parameters $n_{\mathrm{sub}}$ and $n_{\mathrm{pts}}$ on the experiment
$d=3$, $\delta=0.05$.

\begin{table}[htbp]
\centering
\caption{Effect of $n_{\mathrm{pts}}$ for $d=3$, $\delta=0.05$.}
\label{tab:e6-integral-points}
\begin{tabular}{cccccc}
\hline
$n_{\mathrm{sub}}$ & $n_{\mathrm{pts}}$
& resid.\ mean & $L^2$ final & $H^1$ final & time (s) \\
\hline
160 &  8 & $5.260\times 10^{-2}$ & $6.127\times 10^{-3}$ & $1.335\times 10^{-2}$ & 171 \\
160 & 12 & $5.255\times 10^{-2}$ & $6.023\times 10^{-3}$ & $1.324\times 10^{-2}$ & 183 \\
160 & 16 & $5.260\times 10^{-2}$ & $6.130\times 10^{-3}$ & $1.343\times 10^{-2}$ & 214 \\
160 & 20 & $5.250\times 10^{-2}$ & $4.717\times 10^{-3}$ & $1.320\times 10^{-2}$ & 372 \\
160 & 24 & $5.259\times 10^{-2}$ & $5.519\times 10^{-3}$ & $1.302\times 10^{-2}$ & 456 \\
160 & 32 & $5.253\times 10^{-2}$ & $6.127\times 10^{-3}$ & $1.297\times 10^{-2}$ & 623 \\
\hline
\end{tabular}
\end{table}

\begin{table}[htbp]
\centering
\caption{Effect of $n_{\mathrm{sub}}$ for $d=3$, $\delta=0.05$.}
\label{tab:e6-integral-subintervals}
\begin{tabular}{cccccc}
\hline
$n_{\mathrm{sub}}$ & $n_{\mathrm{pts}}$
& resid.\ mean & $L^2$ final & $H^1$ final & time (s) \\
\hline
 40 & 20 & $5.254\times 10^{-2}$ & $6.127\times 10^{-3}$ & $1.272\times 10^{-2}$ & 170 \\
 80 & 20 & $5.252\times 10^{-2}$ & $5.455\times 10^{-3}$ & $1.321\times 10^{-2}$ & 171 \\
120 & 20 & $5.250\times 10^{-2}$ & $5.338\times 10^{-3}$ & $1.298\times 10^{-2}$ & 203 \\
160 & 20 & $5.250\times 10^{-2}$ & $4.717\times 10^{-3}$ & $1.320\times 10^{-2}$ & 372 \\
240 & 20 & $5.249\times 10^{-2}$ & $4.967\times 10^{-3}$ & $1.270\times 10^{-2}$ & 597 \\
320 & 20 & $5.244\times 10^{-2}$ & $4.165\times 10^{-3}$ & $1.311\times 10^{-2}$ & 1261 \\
\hline
\end{tabular}
\end{table}

The indicator $\mathrm{resid.\,mean}$ is the mean residual defined as in
\eqref{eq:neumann-residual}, while $L^2$\,final and $H^1$\,final denote
$\|u_{\mathrm{loc},N}-u_{\delta,p,N}\|_{L^2(\Omega)}$ and
$\|u_{\mathrm{loc},N}-u_{\delta,p,N}\|_{H^1(\Omega)}$, respectively.
Since $u_{\mathrm{loc},N}$, $f$ and $g$ are no longer of tensor-product
form, these errors are computed by Monte Carlo sampling to avoid the
curse of dimensionality. Tables~\ref{tab:e6-integral-points}
and~\ref{tab:e6-integral-subintervals} show that the residual mean is
rather stable, staying around $5.25\times10^{-2}$ under different
quadrature settings. The main effect of refining the quadrature is
reflected in the final $L^2$ and $H^1$ errors, together with the
computational cost. Increasing $n_{\mathrm{pts}}$ up to $20$ improves the
$L^2$ error, but further increase yields little benefit and significantly
longer training time. Similarly, larger $n_{\mathrm{sub}}$ further
reduces the $L^2$ error, but the cost grows rapidly. Therefore, in the
following Neumann experiments we fix $n_{\mathrm{pts}}=20$ and choose
$n_{\mathrm{sub}}$ according to the horizon size $\delta$, increasing
$n_{\mathrm{sub}}$ as $\delta$ decreases; this gives a practical balance
between accuracy and efficiency.

The numerical results of the non-tensor-product Neumann experiments are
displayed in Figures~\ref{fig:neumann-residual-nontensor}
and~\ref{fig:neumann-errors-nontensor}. Due to the additional
approximation errors $\varepsilon_f$ and $\varepsilon_g$ from the
preconditioning step, the results are slightly worse than those in
Figures~\ref{fig:neumann-residual} and~\ref{fig:neumann-errors}, but they
remain in a reasonable regime, which supports the effectiveness of the
TNN-based method in the non-tensor-product setting.

\begin{figure}[htbp]
\centering
\begin{subfigure}[b]{0.48\textwidth}
  \centering
  \includegraphics[width=\textwidth]{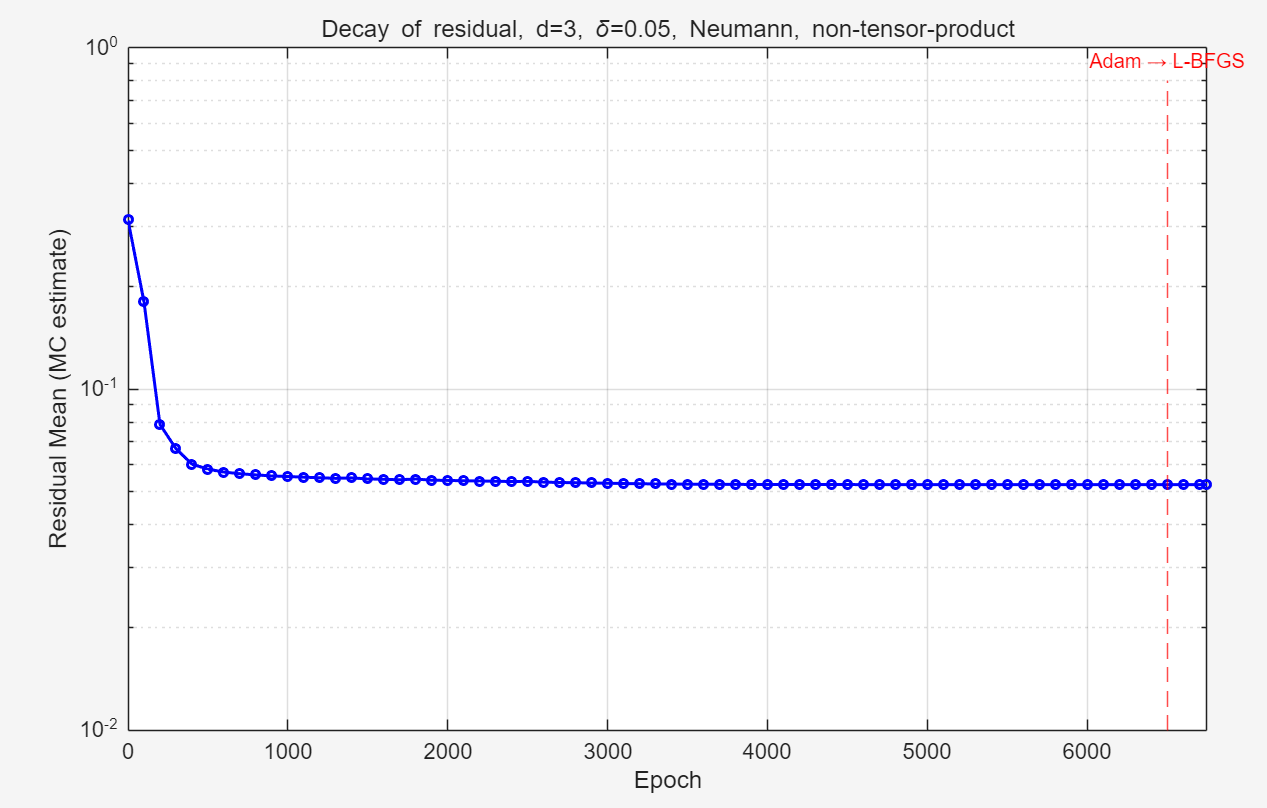}
  \caption{$d=3$}
\end{subfigure}
\hfill
\begin{subfigure}[b]{0.48\textwidth}
  \centering
  \includegraphics[width=\textwidth]{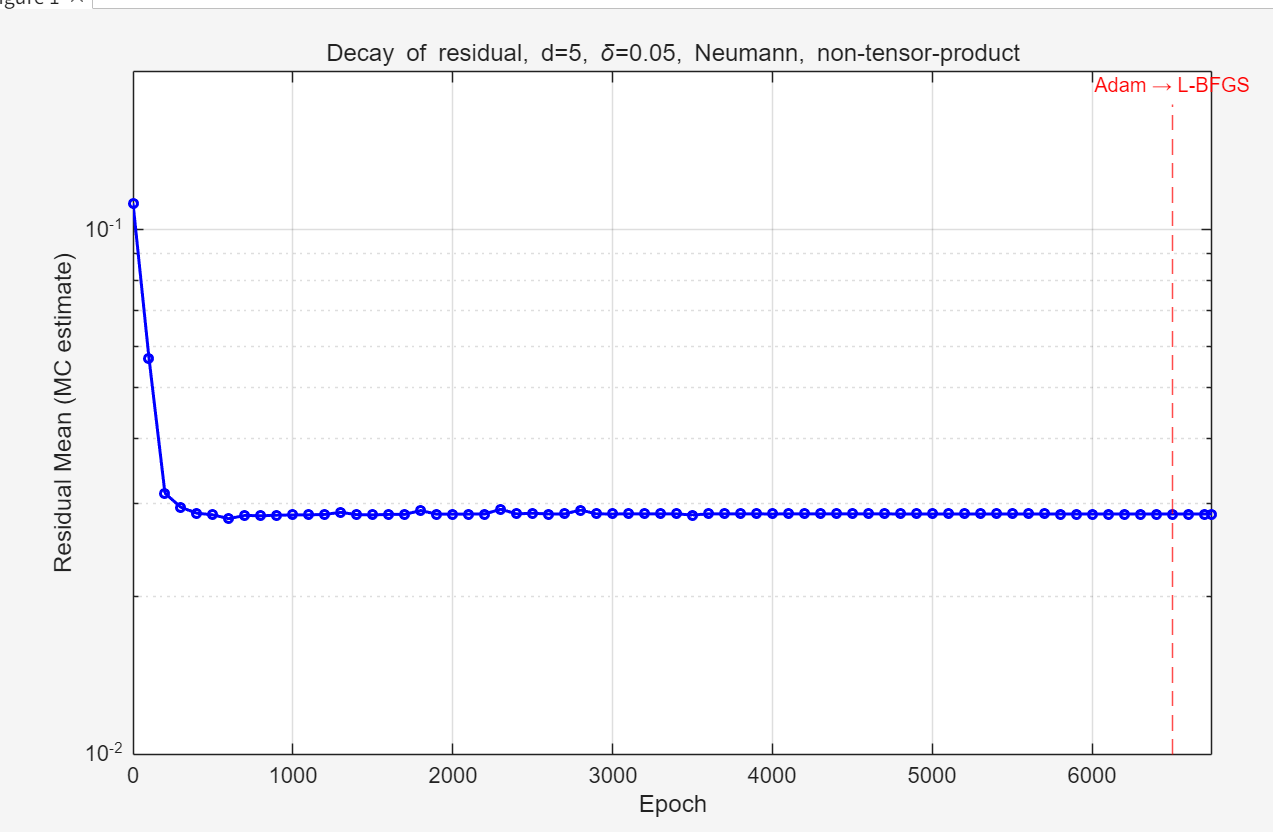}
  \caption{$d=5$}
\end{subfigure}

\vspace{0.5em}

\begin{subfigure}[b]{0.48\textwidth}
  \centering
  \includegraphics[width=\textwidth]{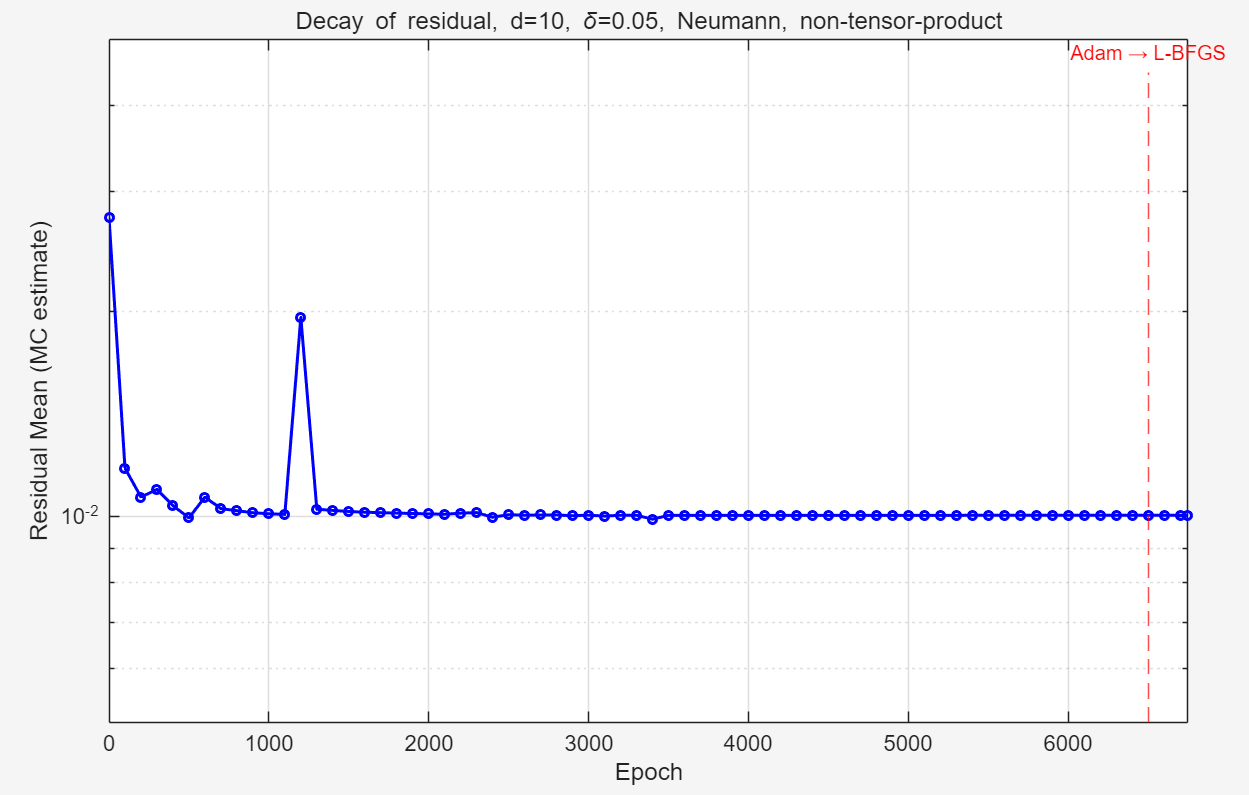}
  \caption{$d=10$}
\end{subfigure}
\hfill
\begin{subfigure}[b]{0.48\textwidth}
  \centering
  \includegraphics[width=\textwidth]{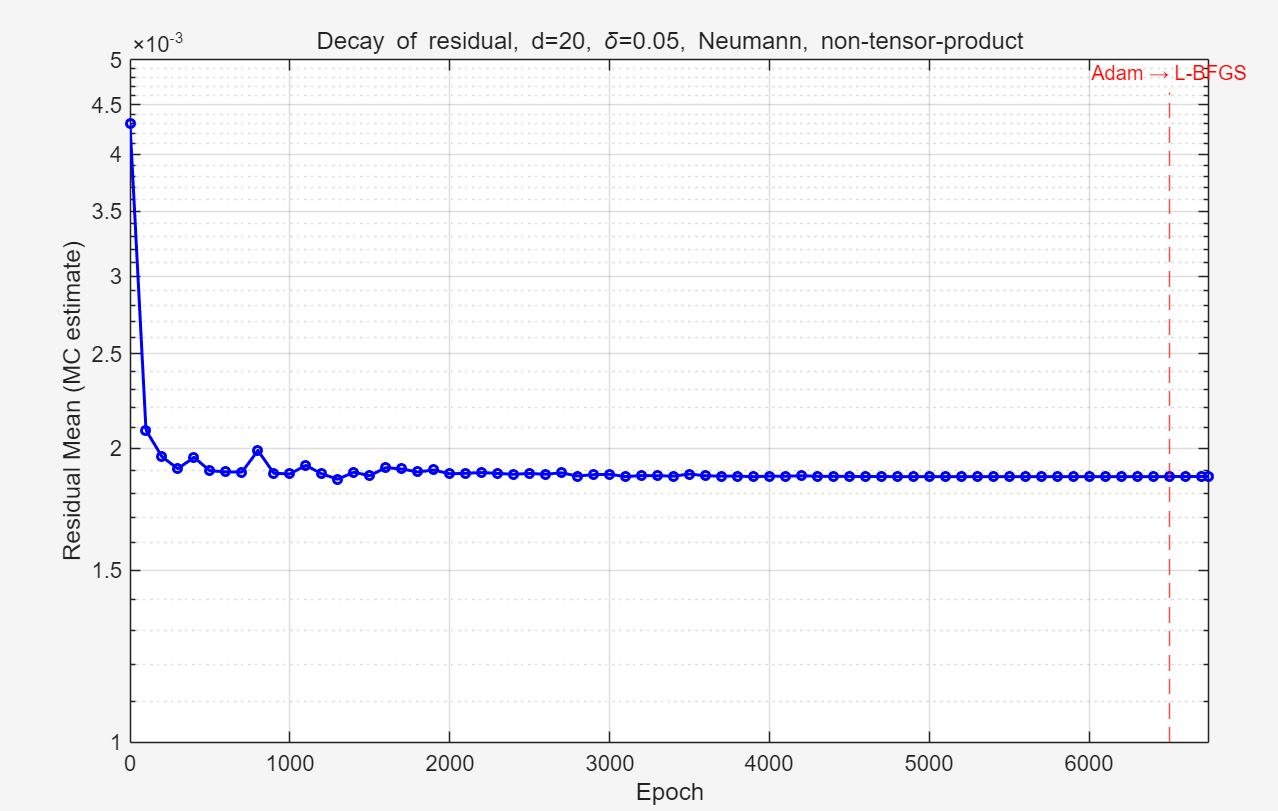}
  \caption{$d=20$}
\end{subfigure}
\caption{Mean pointwise residual $r_{\mathrm{mean},N}$ versus training
         iteration for the Neumann case with non-tensor-product data
         ($d=3,5,10,20$).}
\label{fig:neumann-residual-nontensor}
\end{figure}

\begin{figure}[htbp]
\centering
\begin{subfigure}[b]{0.48\textwidth}
  \centering
  \includegraphics[width=\textwidth]{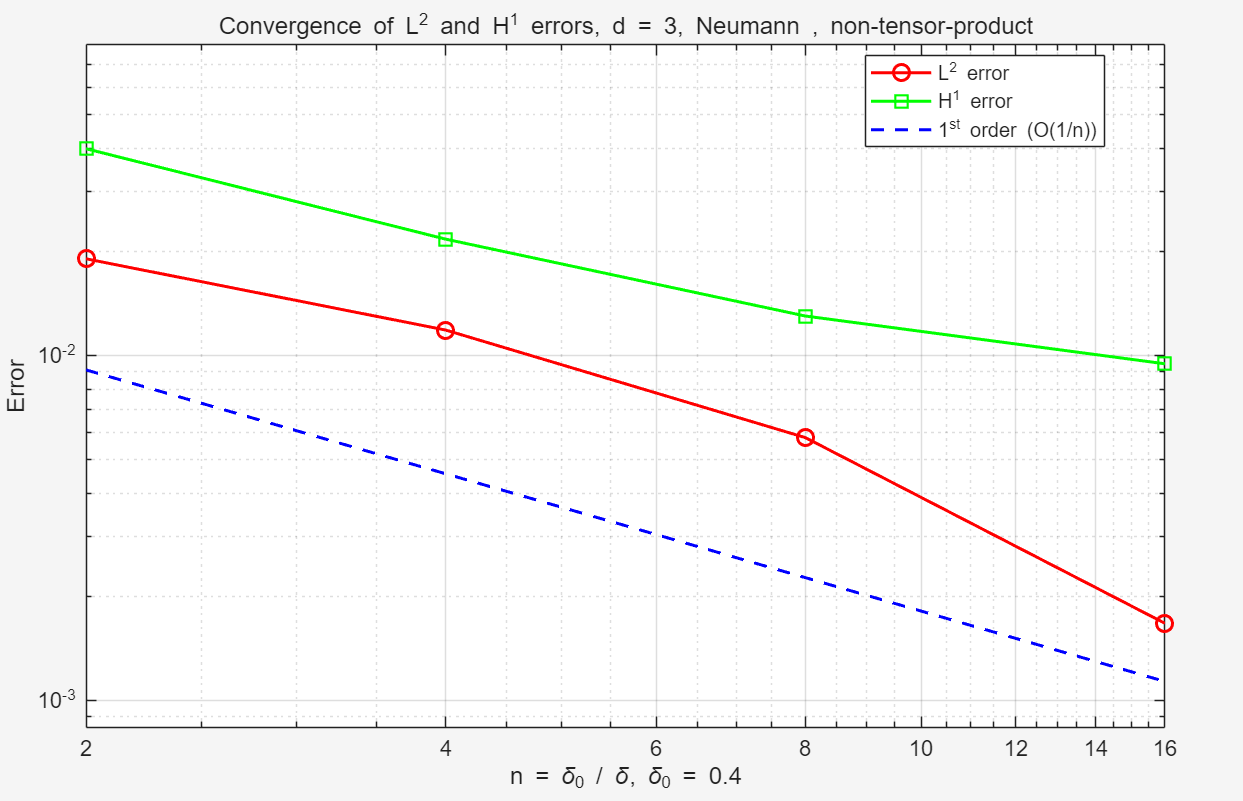}
  \caption{$d=3$}
\end{subfigure}
\hfill
\begin{subfigure}[b]{0.48\textwidth}
  \centering
  \includegraphics[width=\textwidth]{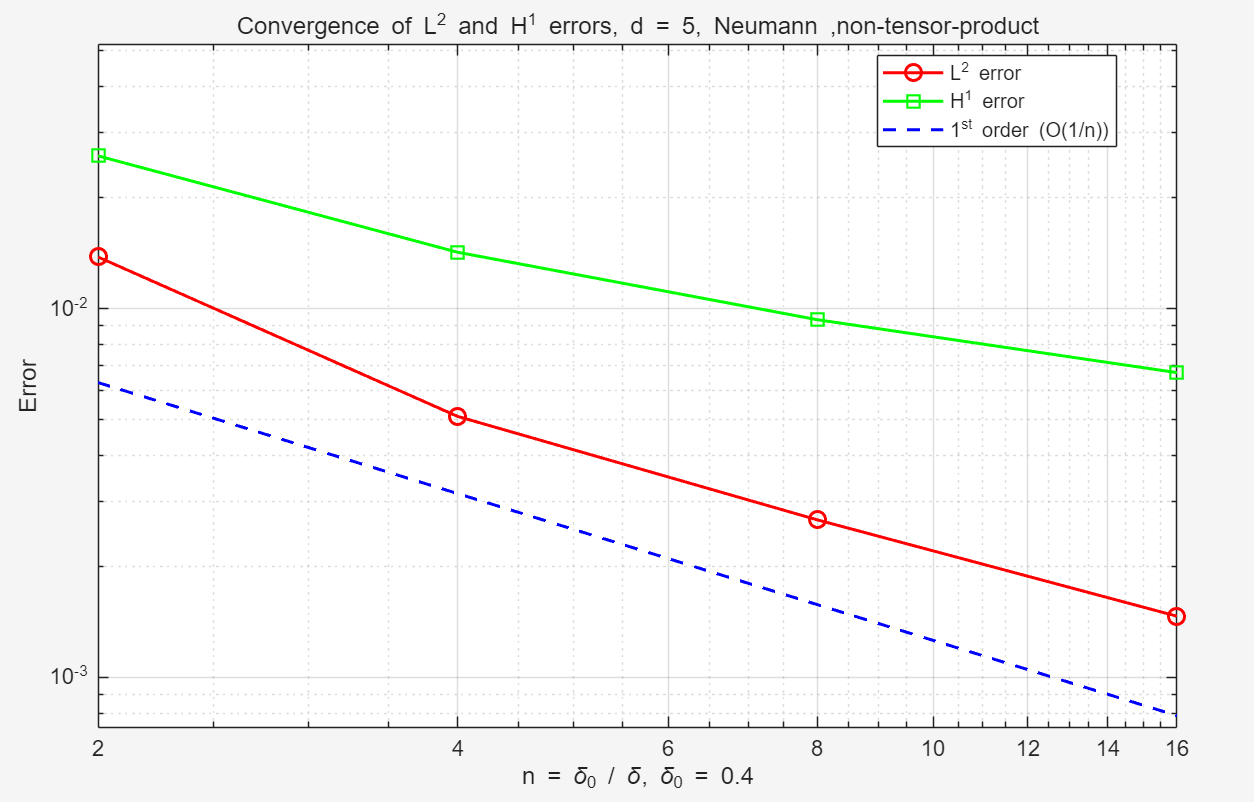}
  \caption{$d=5$}
\end{subfigure}

\vspace{0.5em}

\begin{subfigure}[b]{0.48\textwidth}
  \centering
  \includegraphics[width=\textwidth]{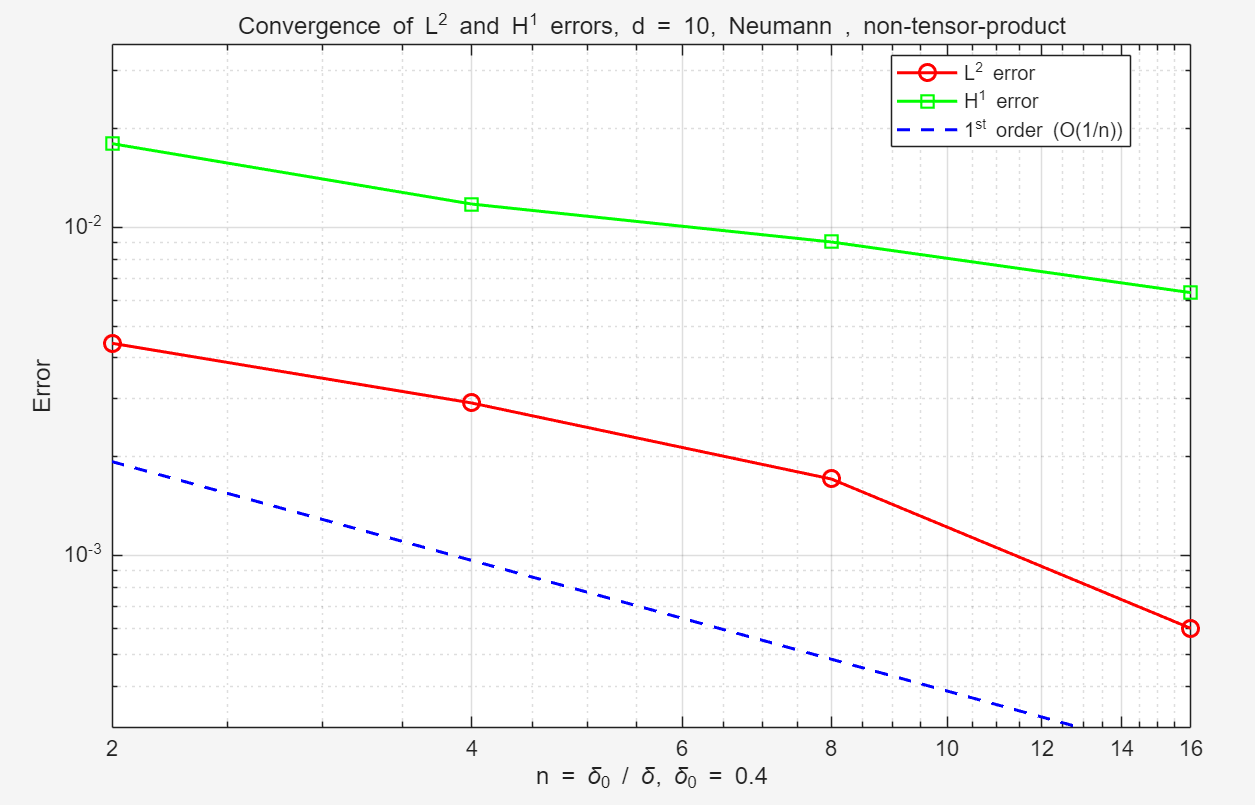}
  \caption{$d=10$}
\end{subfigure}
\hfill
\begin{subfigure}[b]{0.48\textwidth}
  \centering
  \includegraphics[width=\textwidth]{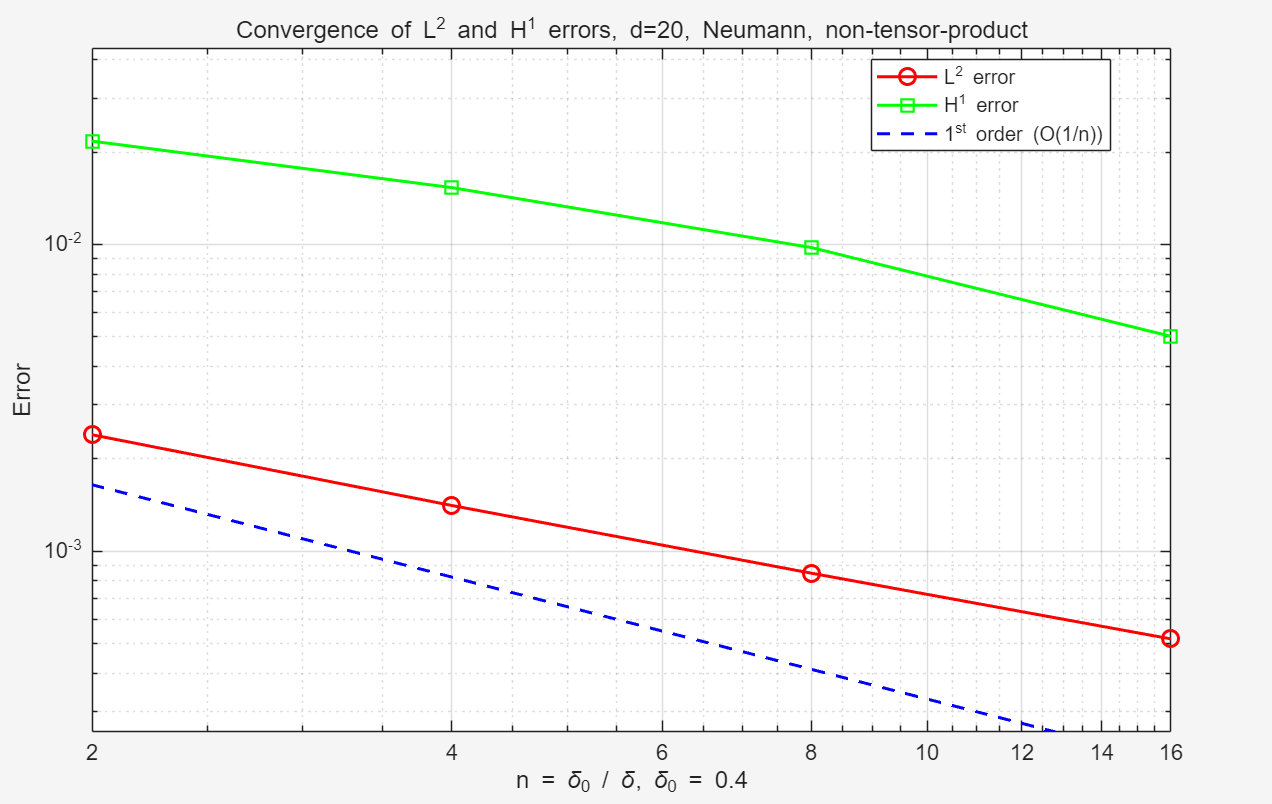}
  \caption{$d=20$}
\end{subfigure}
\caption{$L^2$ and $H^1$ errors between the local solution
         $u_{\mathrm{loc},N}$ and the TNN output $u_{\delta,p,N}$ as
         functions of $\delta$ for the Neumann case with
         non-tensor-product data ($d=3,5,10,20$).}
\label{fig:neumann-errors-nontensor}
\end{figure}

\subsubsection{Dirichlet case}
\label{subsubsec:dirichlet-nontensor}

We also test the same TNN-based method on the Dirichlet problem with the same
exact solution. Here the source term is
\[
f(\xx) = -\Delta u_{\mathrm{loc}}(\xx)
= -|\nabla S|^{2}\,u_{\mathrm{loc}},
\]
where we again used $\Delta S=0$, and the boundary data $g$ is obtained
by restricting $u_{\mathrm{loc}}$ to each face of $\partial\Omega$. The
hyperparameters are taken as in the Neumann case
(Table~\ref{tab:numerical-parameters-nontensor}). The numerical results
are reported in Figures~\ref{fig:dirichlet-residual-nontensor}
and~\ref{fig:dirichlet-errors-nontensor}. Compared with the Neumann case,
the residual exhibits a noticeably larger fluctuation. A possible
explanation is that, in the Dirichlet setting, the theoretical bound in
\eqref{eq:dirichlet-estimate-1} is half an order worse in $\delta$ than
in \eqref{eq:neumann-estimate-1}, owing to the $\varepsilon_g/\delta$
term; moreover, as the dimension increases, the Monte Carlo samples used
to evaluate the residual become more sparsely distributed in $\Omega$,
which leads to higher variance. Despite the fluctuation in early iterations,
the residual and the $L^2$ and $H^1$ errors eventually decrease to
reasonable levels, which again supports the effectiveness of the method.

\begin{figure}[htbp]
\centering
\begin{subfigure}[b]{0.48\textwidth}
  \centering
  \includegraphics[width=\textwidth]{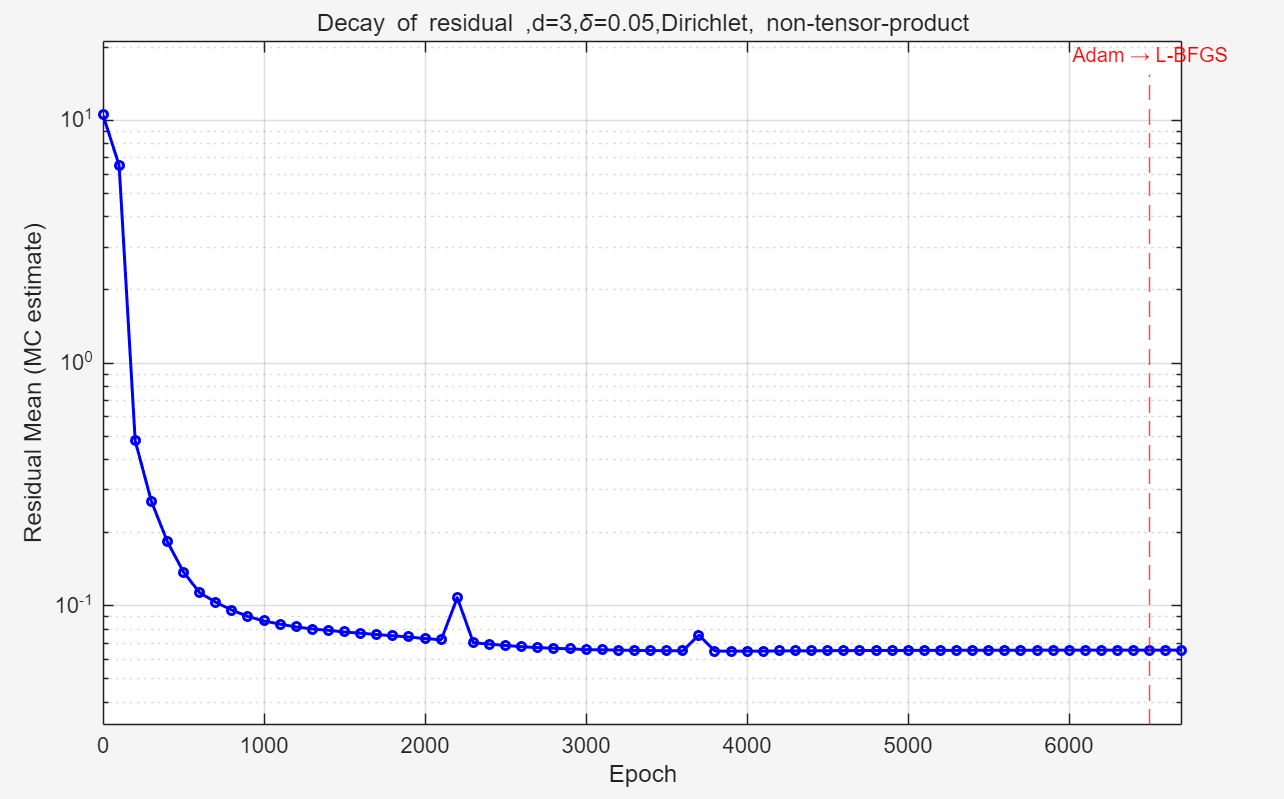}
  \caption{$d=3$}
\end{subfigure}
\hfill
\begin{subfigure}[b]{0.48\textwidth}
  \centering
  \includegraphics[width=\textwidth]{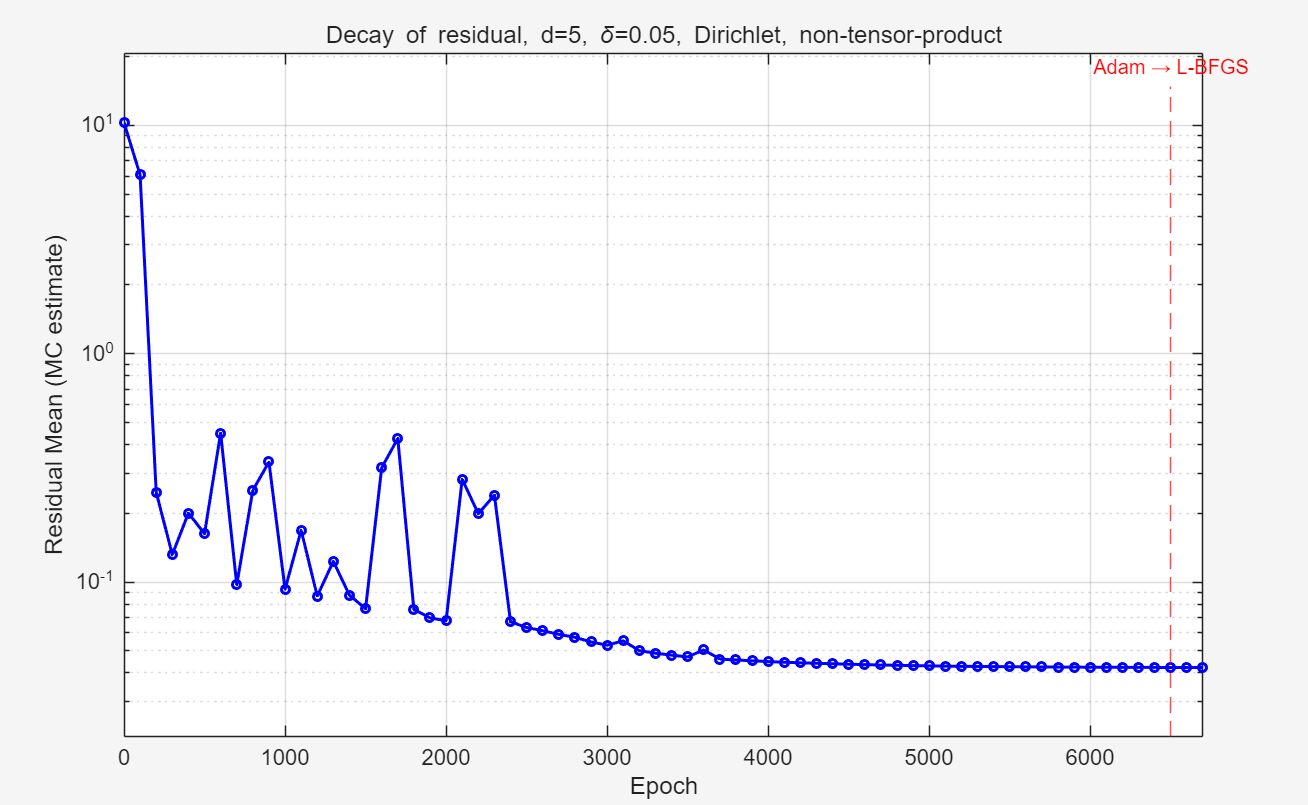}
  \caption{$d=5$}
\end{subfigure}

\vspace{0.5em}

\begin{subfigure}[b]{0.48\textwidth}
  \centering
  \includegraphics[width=\textwidth]{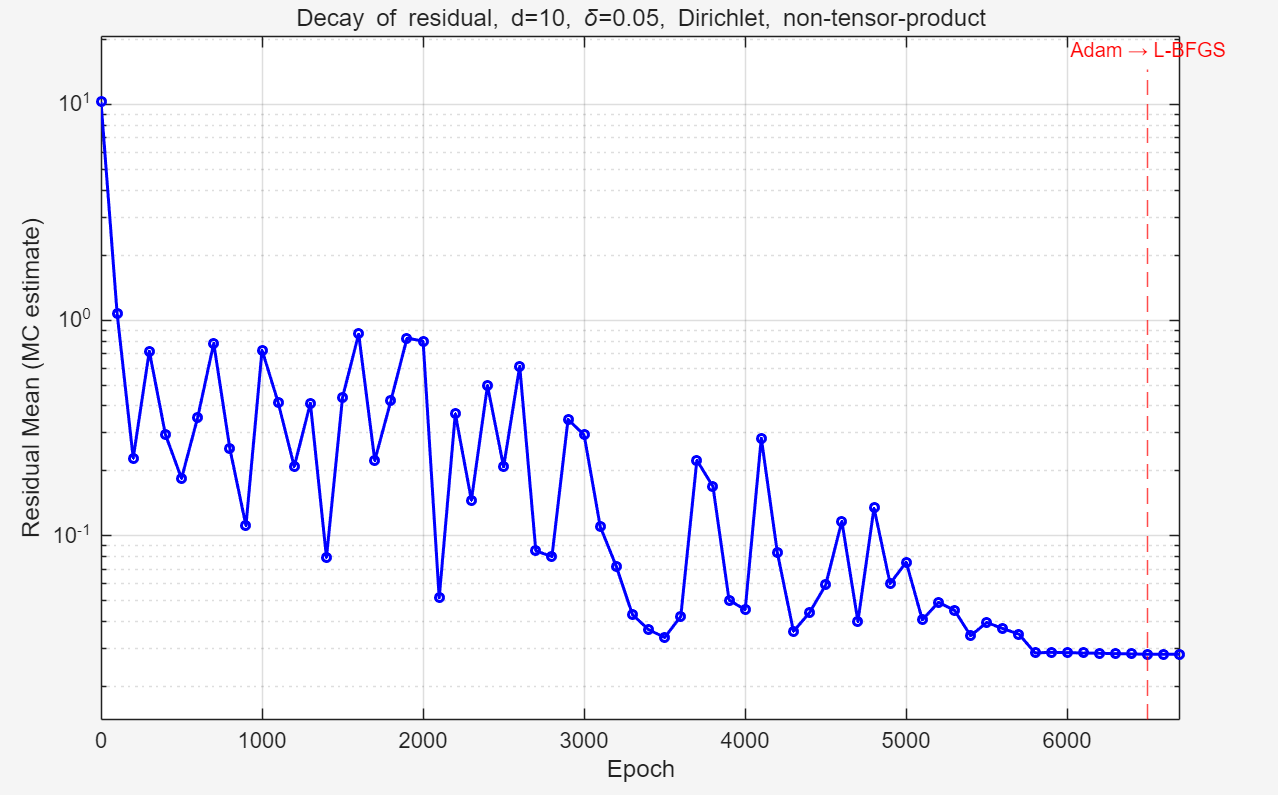}
  \caption{$d=10$}
\end{subfigure}
\hfill
\begin{subfigure}[b]{0.48\textwidth}
  \centering
  \includegraphics[width=\textwidth]{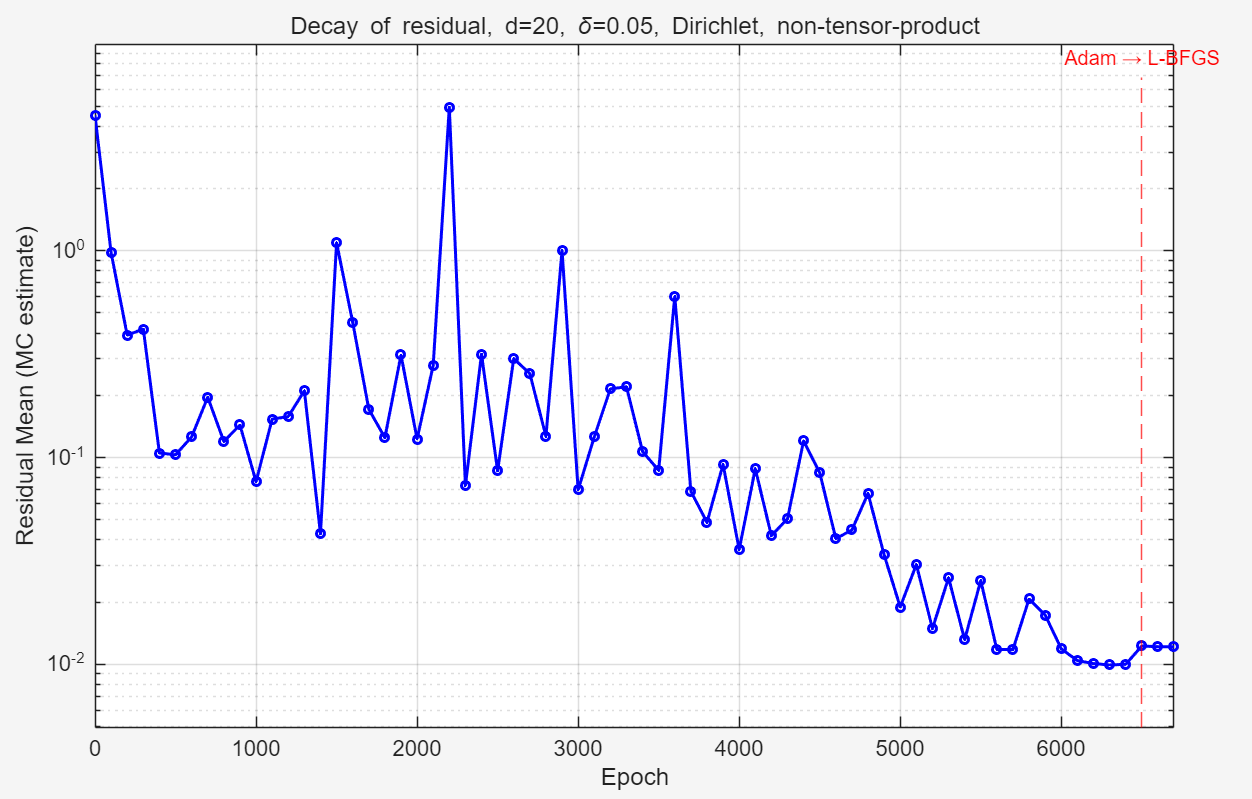}
  \caption{$d=20$}
\end{subfigure}
\caption{Mean pointwise residual $r_{\mathrm{mean}}$ versus training
         iteration for the Dirichlet case with non-tensor-product data
         ($d=3,5,10,20$).}
\label{fig:dirichlet-residual-nontensor}
\end{figure}

\begin{figure}[htbp]
\centering
\begin{subfigure}[b]{0.48\textwidth}
  \centering
  \includegraphics[width=\textwidth]{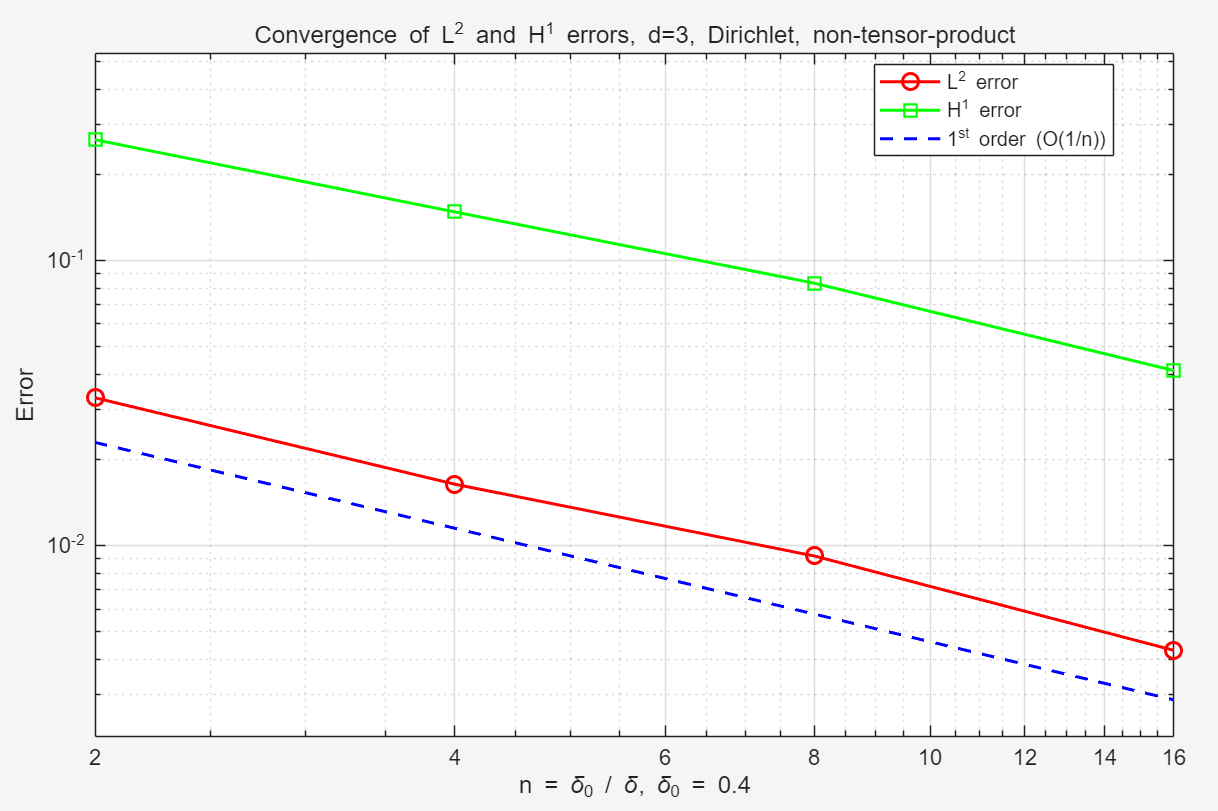}
  \caption{$d=3$}
\end{subfigure}
\hfill
\begin{subfigure}[b]{0.48\textwidth}
  \centering
  \includegraphics[width=\textwidth]{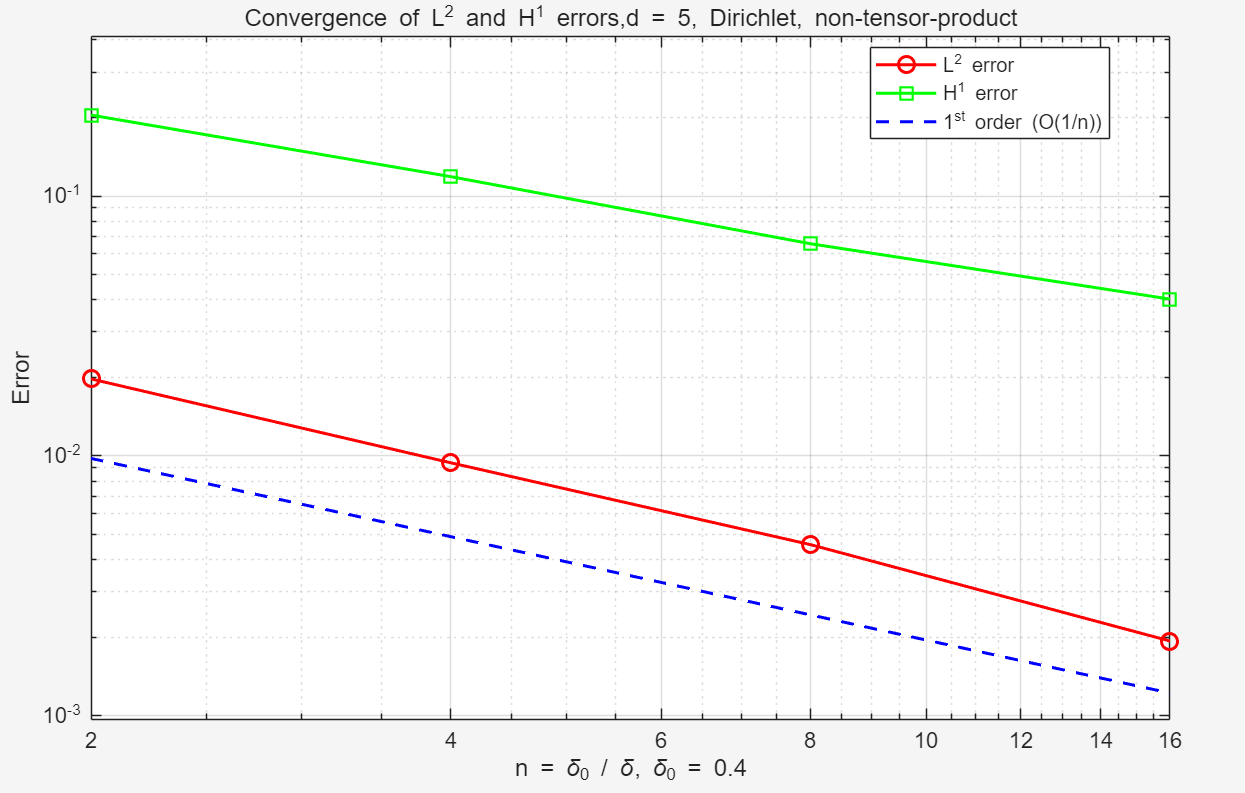}
  \caption{$d=5$}
\end{subfigure}

\vspace{0.5em}

\begin{subfigure}[b]{0.48\textwidth}
  \centering
  \includegraphics[width=\textwidth]{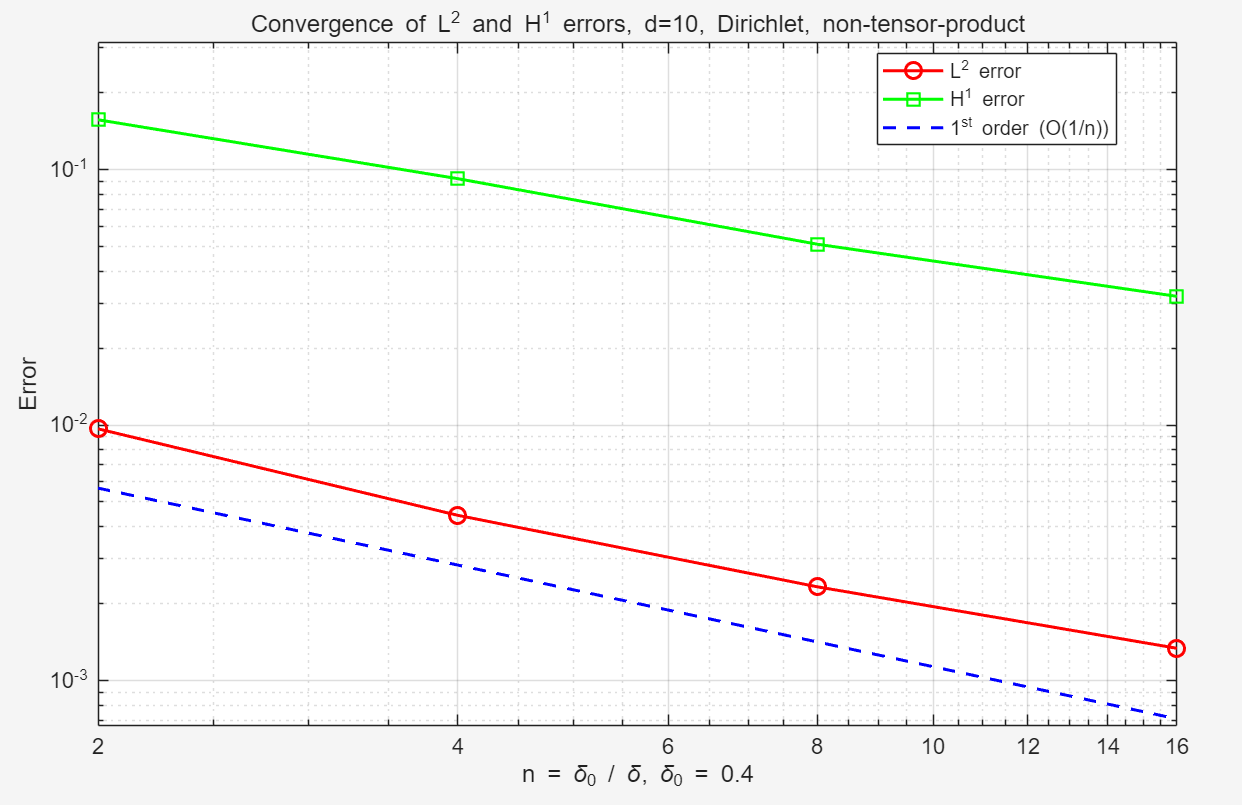}
  \caption{$d=10$}
\end{subfigure}
\hfill
\begin{subfigure}[b]{0.48\textwidth}
  \centering
  \includegraphics[width=\textwidth]{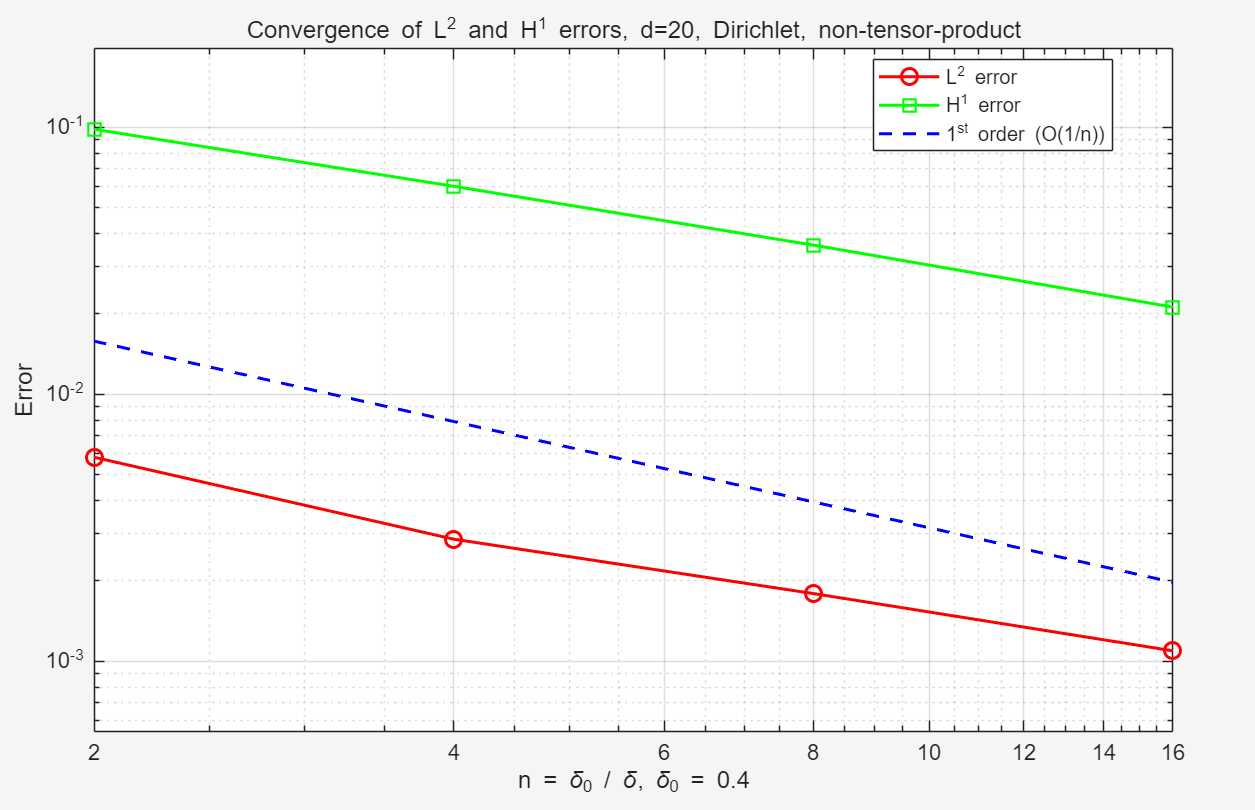}
  \caption{$d=20$}
\end{subfigure}
\caption{$L^2$ and $H^1$ errors between the local solution
         $u_{\mathrm{loc}}$ and the TNN output $u_{\delta,p}$ as functions
         of $\delta$ for the Dirichlet case with non-tensor-product data
         ($d=3,5,10,20$).}
\label{fig:dirichlet-errors-nontensor}
\end{figure}

\subsection{Experiments on \texorpdfstring{$L$}{L}-shaped domains}
\label{subsec:lshape-experiments}

We now examine the TNN-based method on $L$-shaped domains in two and three
dimensions (see Figure~\ref{fig:L-region}) under Neumann boundary
conditions.

\begin{figure}[htbp]
    \centering
    \begin{subfigure}{0.30\textwidth}
        \centering
        \includegraphics[width=\textwidth]{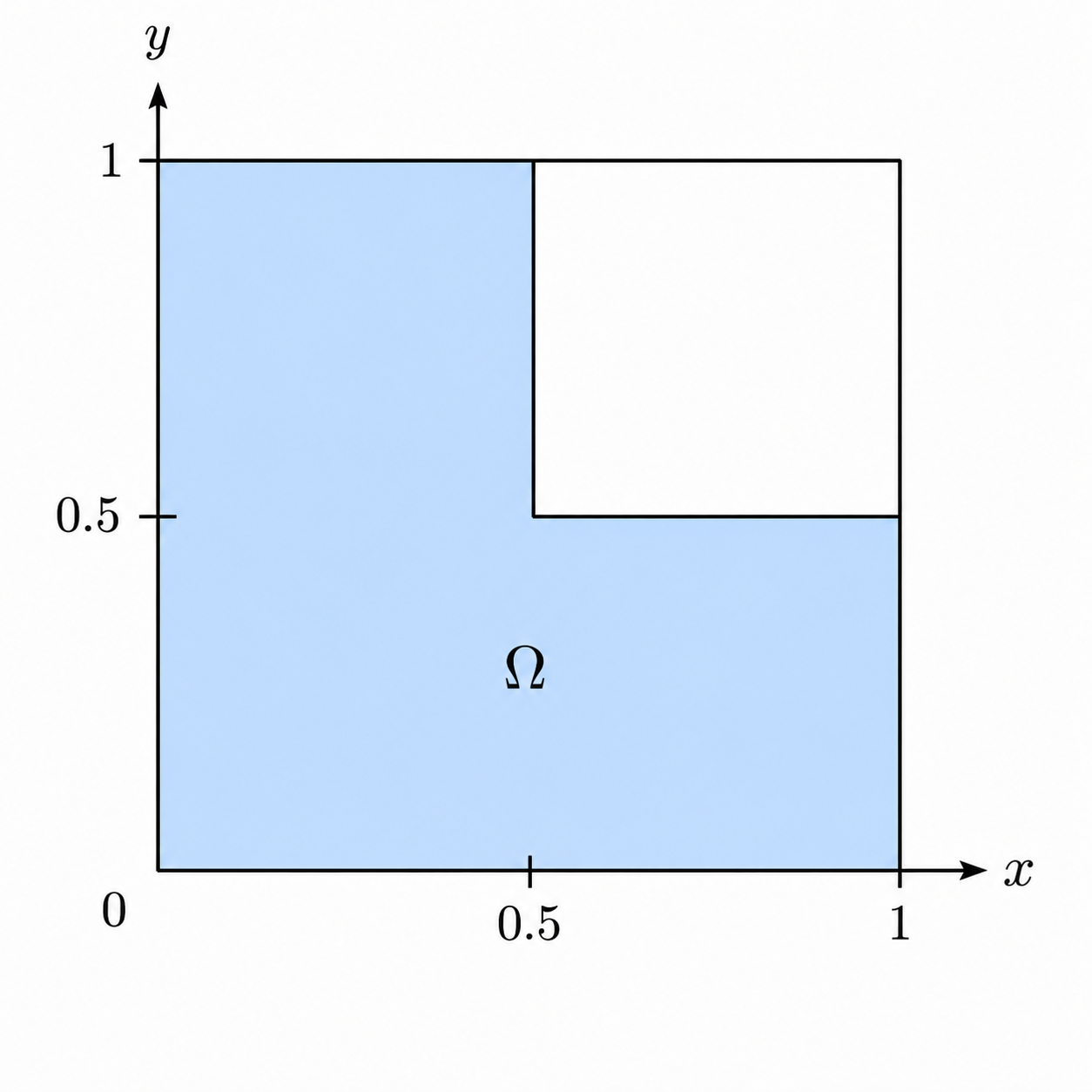}
        \caption{Two-dimensional $L$-shaped region.}
        \label{fig:L-region-2d}
    \end{subfigure}
    \hspace{0.06\textwidth}
    \begin{subfigure}{0.30\textwidth}
        \centering
        \includegraphics[width=\textwidth]{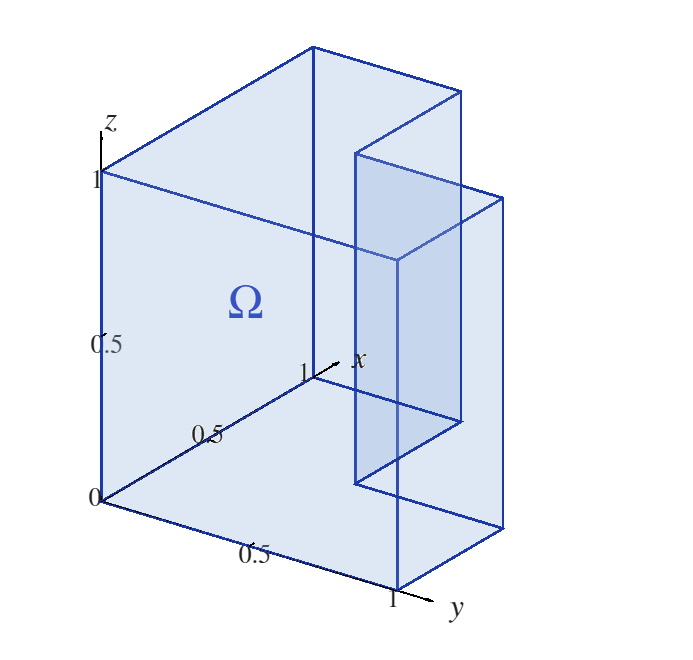}
        \caption{Three-dimensional $L$-shaped region.}
        \label{fig:L-region-3d}
    \end{subfigure}
    \caption{Illustration of the $L$-shaped computational regions.}
    \label{fig:L-region}
\end{figure}

\subsubsection{Two-dimensional case}
\label{subsubsec:lshape-2d}

We first consider the two-dimensional setting. The domain $\Omega$ is
\[
\Omega = \bigl([0,1]\times[0,0.5]\bigr) \cup \bigl([0,0.5]\times[0.5,1]\bigr)
\]
(see Figure~\ref{fig:L-region-2d}), and the exact solution is taken to be
\[
u_{\mathrm{loc}}(x,y) = \exp\!\bigl(0.2x+0.3y+0.2xy\bigr).
\]
The corresponding source term $f$ and boundary data $g$ are computed
through \eqref{eq:neumann-local}. Although $\Omega$ is not of
tensor-product form, the source term $f$ and the nonlocal solution
$u_{\delta,N}$, both viewed as elements of $H^1(\Omega)$, can be extended
to elements of $H^1([0,1]^2)$ via a standard Sobolev extension. Since
the TNN class is dense in $H^1([0,1]^2)$ by
Proposition~\ref{prop:tnn-approximation}, restricting any such TNN
approximation to $\Omega$ yields a TNN approximation in $H^1(\Omega)$. Thus
the approximation mechanism used in
Theorems~\ref{thm:neumann-error-l2}--\ref{thm:neumann-error-h1} remains
applicable. The rigorous boundary-layer estimates in the theorems were stated
for sufficiently smooth domains, so the following $L$-shaped-domain tests
should be viewed as numerical evidence beyond the smooth-domain theory.

We approximate the boundary data $g$ face by face on the six
boundary line segments of $\partial\Omega$:
\begin{align*}
&\{x\in[0,1],\,y=0\},\quad
\{x=1,\,y\in[0,0.5]\},\quad
\{x\in[0.5,1],\,y=0.5\},\\
&\{x=0.5,\,y\in[0.5,1]\},\quad
\{x\in[0,0.5],\,y=1\},\quad
\{x=0,\,y\in[0,1]\}.
\end{align*}
Following this procedure, the method can be applied as in
Section~\ref{subsec:nontensor-experiments}. The numerical results are
displayed in Figures~\ref{fig:lshape-2d-residual}
and~\ref{fig:lshape-2d-error}, supporting the empirical robustness of the method on the
two-dimensional $L$-shaped domain.

\subsubsection{Three-dimensional case}
\label{subsubsec:lshape-3d}

We now consider a three-dimensional $L$-shaped domain
\[
\Omega = \bigl([0,1]\times[0,0.5]\times[0,1]\bigr)
       \cup \bigl([0,0.5]\times[0.5,1]\times[0,1]\bigr)
\]
(see Figure~\ref{fig:L-region-3d}), with exact solution
\[
u_{\mathrm{loc}}(x,y,z)
= \exp\!\bigl(0.2x+0.3y+0.1z+0.2xy+0.15yz+0.1xz\bigr),
\]
and the corresponding $f$ and $g$ defined through
\eqref{eq:neumann-local}. As in the two-dimensional case, $f$ and the
nonlocal solution $u_{\delta,N}$ can be extended from $H^1(\Omega)$ to
$H^1([0,1]^{3})$ and approximated by TNN functions. The boundary data
$g$ is approximated face by face on the ten rectangular faces of
$\partial\Omega$:
\begin{align*}
&\{(x,y,z)\in[0,1]\times[0,0.5]\times\{0\}\},\quad
 \{(x,y,z)\in[0,0.5]\times[0.5,1]\times\{0\}\},\\
&\{(x,y,z)\in[0,1]\times[0,0.5]\times\{1\}\},\quad
 \{(x,y,z)\in[0,0.5]\times[0.5,1]\times\{1\}\},\\
&\{(x,y,z)\in[0,1]\times\{0\}\times[0,1]\},\quad
 \{(x,y,z)\in\{1\}\times[0,0.5]\times[0,1]\},\\
&\{(x,y,z)\in[0.5,1]\times\{0.5\}\times[0,1]\},\quad
 \{(x,y,z)\in\{0.5\}\times[0.5,1]\times[0,1]\},\\
&\{(x,y,z)\in[0,0.5]\times\{1\}\times[0,1]\},\quad
 \{(x,y,z)\in\{0\}\times[0,1]\times[0,1]\}.
\end{align*}
As in the two-dimensional case, this experiment uses the same approximation
mechanism but lies outside the smooth-domain assumptions used in the proof of
Theorems~\ref{thm:neumann-error-l2}--\ref{thm:neumann-error-h1}. The numerical
results are reported in Figures~\ref{fig:lshape-3d-residual}
and~\ref{fig:lshape-3d-error}, confirming the practical robustness of the
method on the three-dimensional $L$-shaped domain.

\begin{figure}[htbp]
\centering
\begin{subfigure}[b]{0.48\textwidth}
  \centering
  \includegraphics[width=\textwidth]{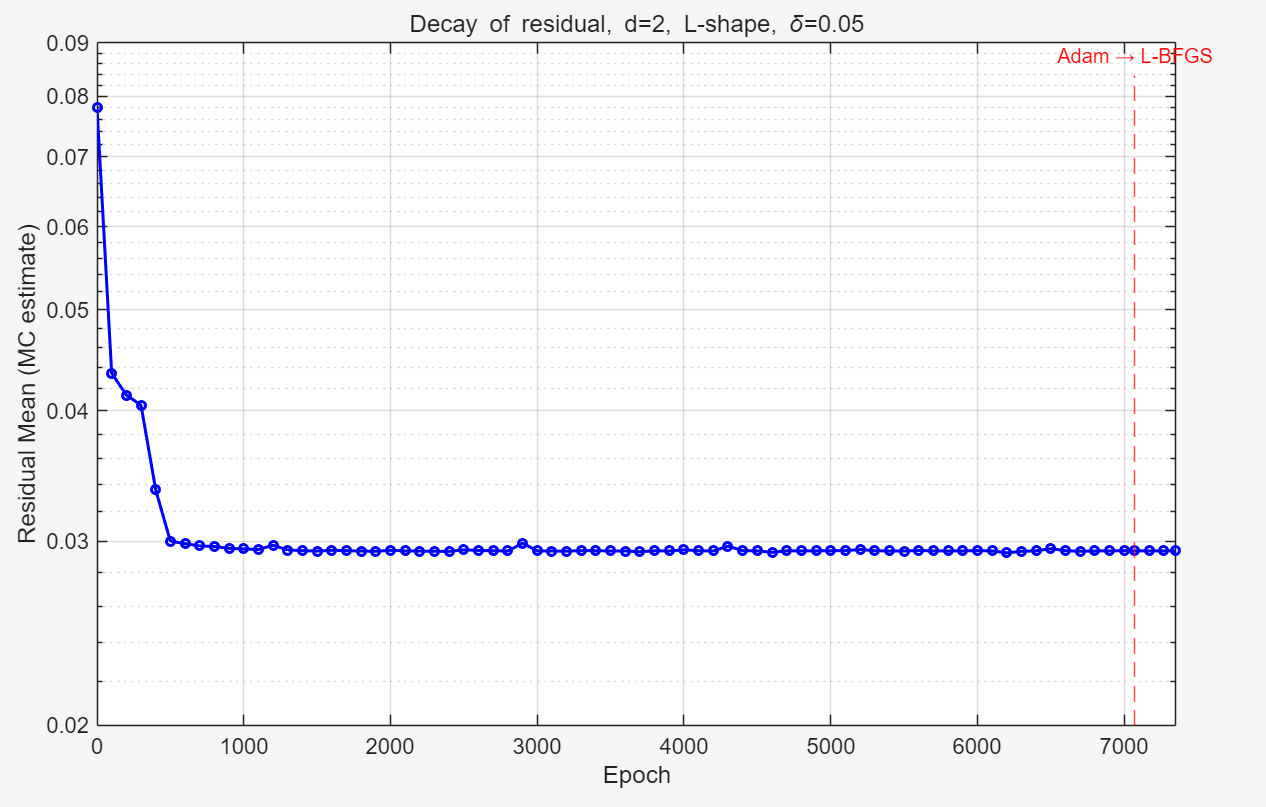}
  \caption{Mean residual ($d=2$).}
  \label{fig:lshape-2d-residual}
\end{subfigure}
\hfill
\begin{subfigure}[b]{0.48\textwidth}
  \centering
  \includegraphics[width=\textwidth]{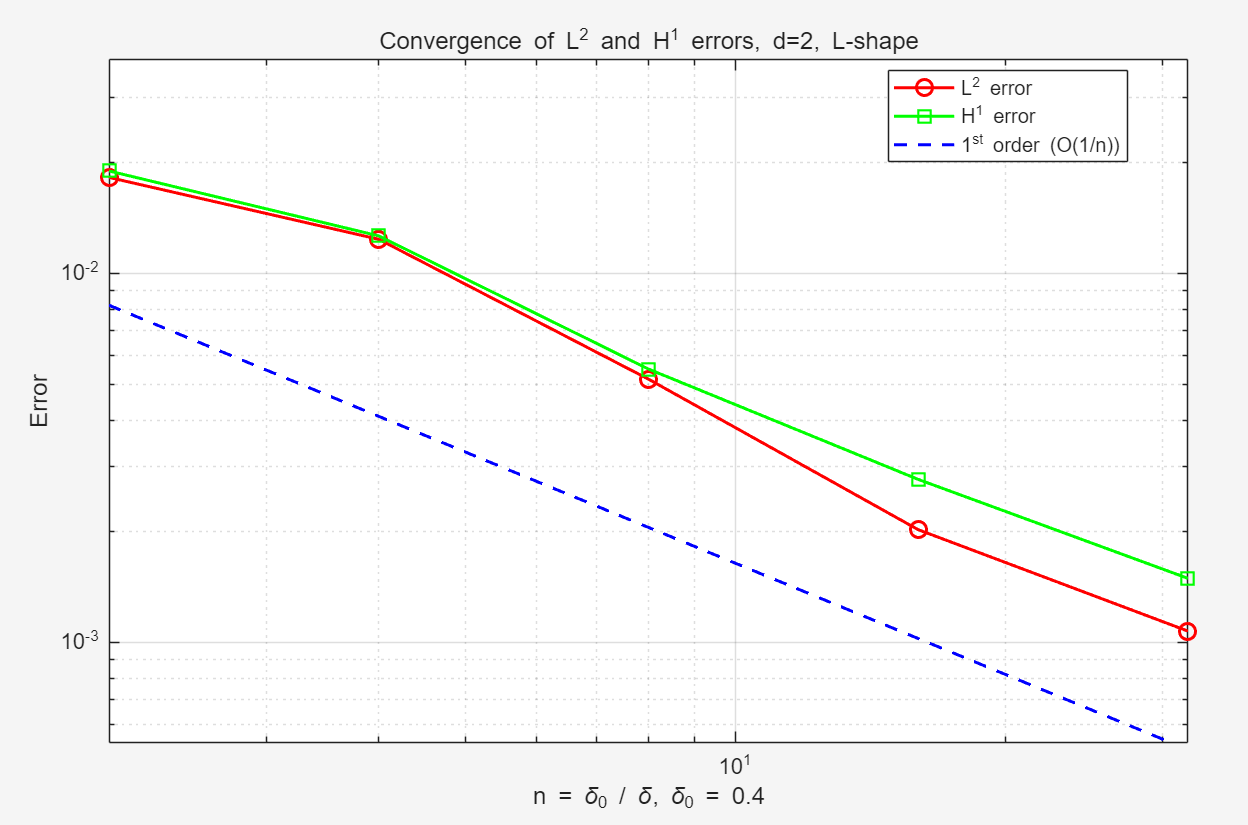}
  \caption{$L^2$ and $H^1$ errors ($d=2$).}
  \label{fig:lshape-2d-error}
\end{subfigure}

\vspace{0.5em}

\begin{subfigure}[b]{0.48\textwidth}
  \centering
  \includegraphics[width=\textwidth]{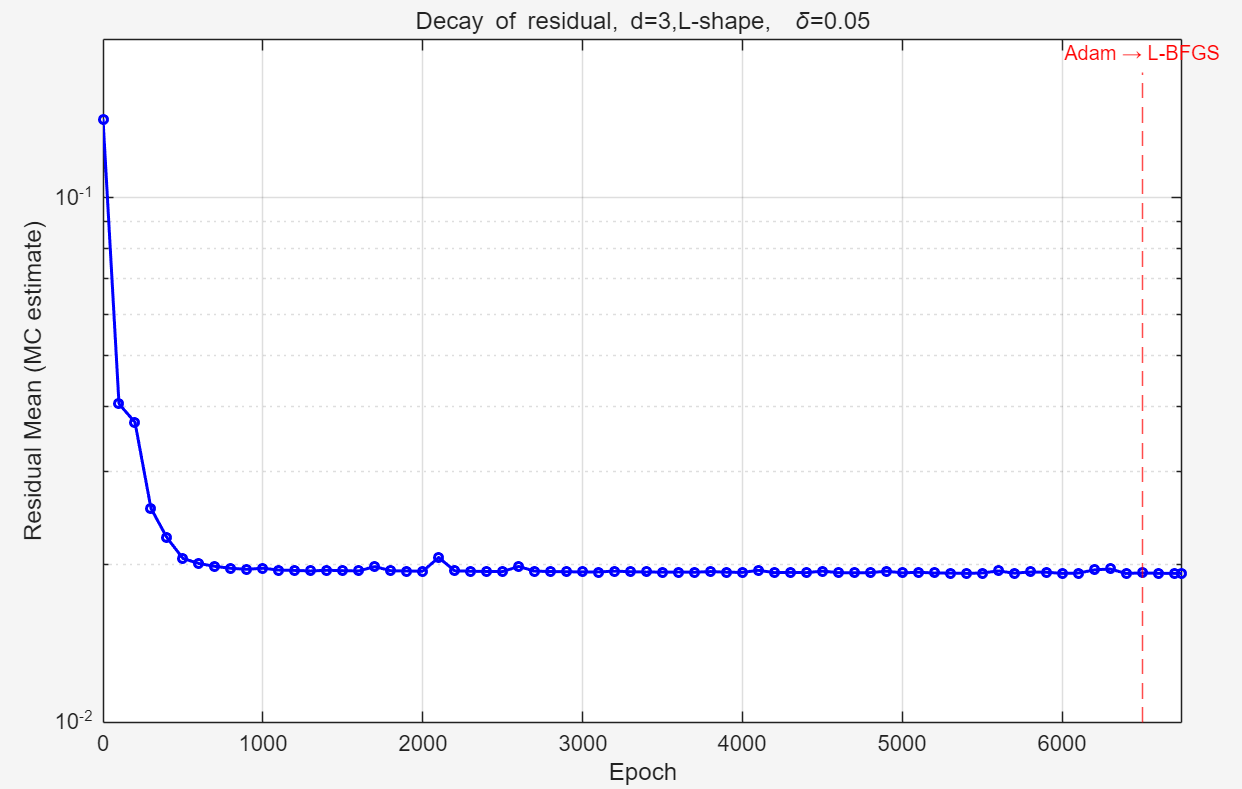}
  \caption{Mean residual ($d=3$).}
  \label{fig:lshape-3d-residual}
\end{subfigure}
\hfill
\begin{subfigure}[b]{0.48\textwidth}
  \centering
  \includegraphics[width=\textwidth]{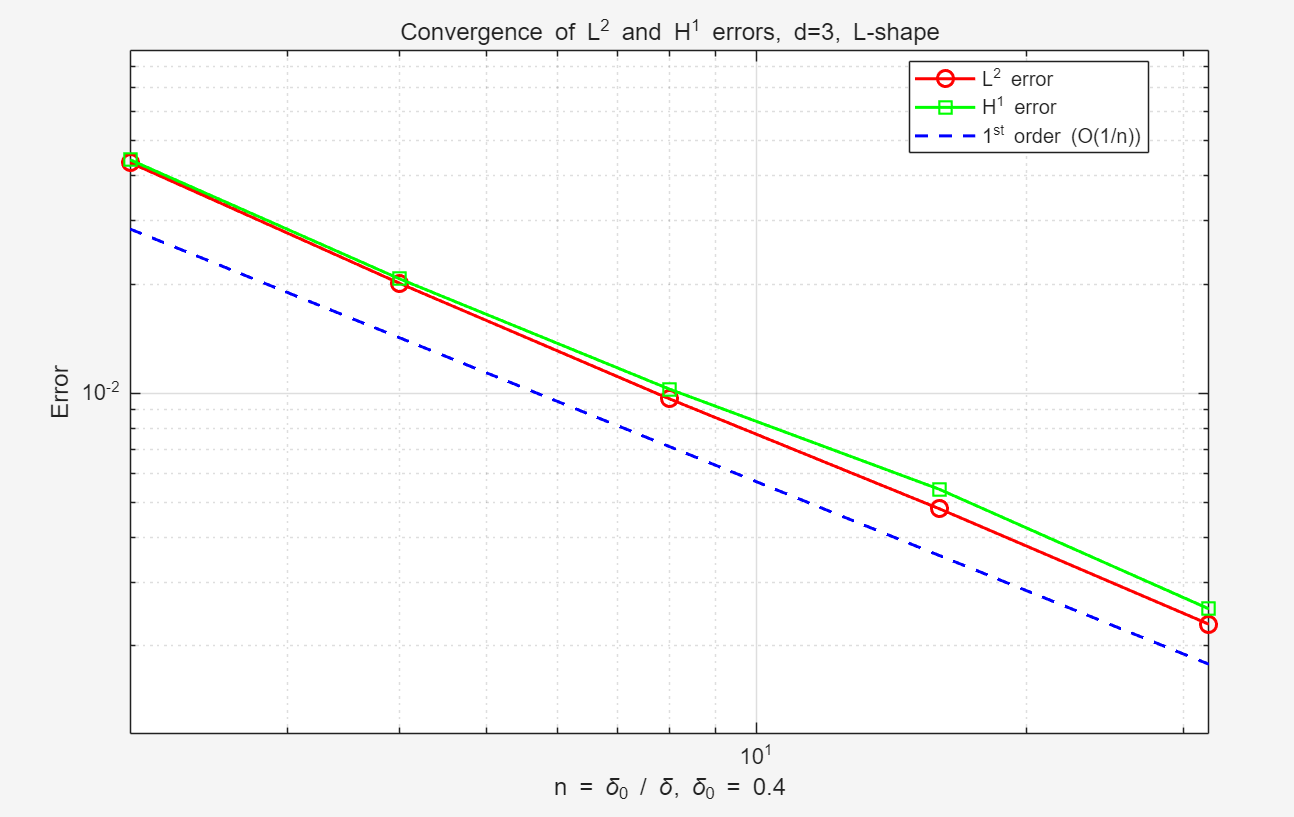}
  \caption{$L^2$ and $H^1$ errors ($d=3$).}
  \label{fig:lshape-3d-error}
\end{subfigure}
\caption{Mean pointwise residual $r_{\mathrm{mean},N}$ and the $L^2$,
         $H^1$ errors between the local solution $u_{\mathrm{loc},N}$ and
         the TNN output $u_{\delta,p,N}$ on $L$-shaped regions under
         Neumann boundary conditions.}
\label{fig:lshape-results}
\end{figure}

\section{Conclusions}
\label{sec:conclusions}

In this paper we have constructed and analyzed a variational solver for
nonlocal diffusion models with Dirichlet and Neumann boundary conditions using
the existing tensor neural network architecture as the trial class. The scheme
combines a TNN ansatz with TNN-based preconditioning of the data; for Gaussian
kernels on
tensor-product or rectangular partitionable domains, every term in the
modified loss reduces to products of low-dimensional integrals evaluated
by composite Gauss--Legendre quadrature. We have established
asymptotically compatible error estimates for both boundary conditions,
decomposing the total error into data-preconditioning, trial-class
approximation, and optimization contributions. In the Neumann case a
gradient estimate is further obtained via a smoothing post-processing.
Numerical experiments on tensor-product domains with both tensor-product and
non-tensor-product data corroborate the theoretical results, while additional
experiments on $L$-shaped domains demonstrate the practical robustness of the
method beyond the smooth-domain setting covered by the analysis.

Several directions for future work suggest themselves: extending the
method to general nonlocal kernels without separable structure; providing
a rigorous derivation of the sharp convergence rate and an $H^1$ estimate
in the Dirichlet case; relaxing the rectangular-domain assumption to more
general geometries; and applying this TNN-based nonlocal solver to time-dependent problems,
multiscale material models, and fracture mechanics.

\appendix

\section{Proof of Proposition~\ref{prop:neumann-wellposedness}}
\label{appendix:neumann-wellposedness}

We prove Proposition~\ref{prop:neumann-wellposedness}. Throughout this
appendix, $E_{\delta,N}(u)$ denotes the energy functional defined in
\eqref{eq:neumann-energy}. The bilinear form associated with
$E_{\delta,N}$ is continuous and coercive on $L^2(\Omega)$ by
\cite[Lemma~3.1]{meng2026asymptotically}, while the right-hand side of
\eqref{eq:neumann-nonlocal} is bounded on $L^2(\Omega)$ by the estimates below.
Thus the Lax--Milgram theorem gives a unique $L^2$ solution; the estimates
below show that this solution belongs to $H^1(\Omega)$. Let $u_{\delta,N}$ be
this solution. Testing \eqref{eq:neumann-nonlocal} with $u_{\delta,N}$ and
integrating over $\Omega$, we obtain
\begin{align}
E_{\delta,N}(u_{\delta,N})
&=
\int_\Omega\!\!\int_\Omega
\bar R_\delta(\xx,\yy)\,
u_{\delta,N}(\xx)f(\yy)
\,\d\xx\,\d\yy
\notag \\
&\quad
+
2\int_\Omega\!\!\int_{\partial\Omega}
\bar R_\delta(\xx,\yy)\,
u_{\delta,N}(\xx)g(\yy)
\,\d\xx\,\d S_\yy .
\label{eq:neumann-energy-identity}
\end{align}

By the Cauchy--Schwarz inequality and the kernel estimate
\eqref{eq:kernel-est-2}, we have
\begin{align}
&\int_\Omega\!\!\int_\Omega
\bar R_\delta(\xx,\yy)\,
u_{\delta,N}(\xx)f(\yy)
\,\d\xx\,\d\yy
\notag \\
&\quad \le
\!\left(
\int_\Omega\!\!\int_\Omega
\bar R_\delta(\xx,\yy)\,
u_{\delta,N}(\xx)^{2}
\,\d\xx\,\d\yy\right)^{\!1/2}
\!\!\left(
\int_\Omega\!\!\int_\Omega
\bar R_\delta(\xx,\yy)\,
f(\yy)^{2}
\,\d\xx\,\d\yy\right)^{\!1/2}
\notag \\
&\quad \le
C\,\|u_{\delta,N}\|_{L^2(\Omega)}\,
\|f\|_{L^2(\Omega)} .
\label{eq:neumann-source-est}
\end{align}
Similarly, by Cauchy--Schwarz and the boundary kernel estimate
\eqref{eq:kernel-est-3},
\begin{align}
&\int_\Omega\!\!\int_{\partial\Omega}
\bar R_\delta(\xx,\yy)\,
u_{\delta,N}(\xx)g(\yy)
\,\d\xx\,\d S_\yy
\notag \\
&\quad \le
\frac{C}{\sqrt{\delta}}\,
\|u_{\delta,N}\|_{L^2(\Omega)}\,
\|g\|_{L^2(\partial\Omega)} .
\label{eq:neumann-boundary-est}
\end{align}
Combining
\eqref{eq:neumann-energy-identity}--\eqref{eq:neumann-boundary-est},
\begin{equation}
\label{eq:neumann-energy-upper}
E_{\delta,N}(u_{\delta,N})
\le
C\,\|u_{\delta,N}\|_{L^2(\Omega)}
\!\left(\|f\|_{L^2(\Omega)}
+\frac{1}{\sqrt{\delta}}\,\|g\|_{L^2(\partial\Omega)}\right).
\end{equation}

By the coercivity estimate of the nonlocal
energy~\cite[Lemma~3.1]{meng2026asymptotically},
\begin{equation}
\label{eq:neumann-coercivity}
\|u_{\delta,N}\|_{L^2(\Omega)}^{2}
\le
C\,E_{\delta,N}(u_{\delta,N}).
\end{equation}
Combining \eqref{eq:neumann-energy-upper} and
\eqref{eq:neumann-coercivity} yields
\begin{equation}
\label{eq:neumann-l2-est}
\|u_{\delta,N}\|_{L^2(\Omega)}
\le
C\!\left(\|f\|_{L^2(\Omega)}
+\frac{1}{\sqrt{\delta}}\,\|g\|_{L^2(\partial\Omega)}\right).
\end{equation}

It remains to estimate the $H^1$-seminorm of $u_{\delta,N}$. From
\eqref{eq:neumann-nonlocal} we may rewrite $u_{\delta,N}$ as
\begin{align}
u_{\delta,N}(\xx)
&=
S_\delta u_{\delta,N}(\xx)
+
\frac{\delta^{2}}{w_\delta(\xx)}
\int_\Omega
\bar R_\delta(\xx,\yy)f(\yy)
\,\d\yy
\notag \\
&\quad
+
\frac{2\delta^{2}}{w_\delta(\xx)}
\int_{\partial\Omega}
\bar R_\delta(\xx,\yy)g(\yy)
\,\d S_\yy
-
\frac{\delta^{2}}{w_\delta(\xx)}
\int_\Omega
\bar R_\delta(\xx,\yy)u_{\delta,N}(\yy)
\,\d\yy
\notag \\
&=: S_\delta u_{\delta,N}(\xx)
+ I_1(\xx) + I_2(\xx) + I_3(\xx) ,
\label{eq:neumann-decomposition}
\end{align}
where $w_\delta(\xx)$ and $S_\delta$ are defined in \eqref{eq:mollifier}. It
therefore suffices to estimate
\[
\|\nabla S_\delta u_{\delta,N}\|_{L^2(\Omega)},\quad
\|\nabla I_1\|_{L^2(\Omega)},\quad
\|\nabla I_2\|_{L^2(\Omega)},\quad
\|\nabla I_3\|_{L^2(\Omega)} .
\]

\medskip
\noindent\emph{Estimate of $\nabla I_1$.} Differentiating $I_1$ gives
\begin{align}
\nabla I_1(\xx)
&=
\frac{\delta^{2}}{w_\delta(\xx)}
\int_\Omega
\nabla_{\!\xx}\bar R_\delta(\xx,\yy)\, f(\yy)
\,\d\yy
\notag \\
&\quad
-
\frac{\delta^{2}\nabla w_\delta(\xx)}{w_\delta(\xx)^{2}}
\int_\Omega
\bar R_\delta(\xx,\yy)\, f(\yy)
\,\d\yy
\notag \\
&=: I_{1,1}(\xx)+I_{1,2}(\xx).
\label{eq:I1-grad-decomp}
\end{align}
Using
$|\nabla_{\!\xx}\bar R_\delta(\xx,\yy)|
=\dfrac{|\xx-\yy|}{2\delta^2}\,R_\delta(\xx,\yy)$ and the scaled
second-moment bound
\[
\int_\Omega \frac{|\xx-\yy|^2}{\delta^4}
R_\delta(\xx,\yy)\,\d\yy \le \frac{C}{\delta^2},
\]
the Cauchy--Schwarz inequality gives
\begin{align}
\|I_{1,1}\|_{L^2(\Omega)}^{2}
&\le
C\delta^{4}
\int_\Omega
\!\left(
\int_\Omega
\frac{|\xx-\yy|}{2\delta^{2}}\,
R_\delta(\xx,\yy)\,|f(\yy)|
\,\d\yy\right)^{\!2}
\d\xx
\notag \\
&\le
C\delta^{2}
\int_\Omega
\int_\Omega R_\delta(\xx,\yy)\,f(\yy)^2\,\d\yy\,\d\xx
\le
C\delta^{2}\,\|f\|_{L^2(\Omega)}^{2} .
\label{eq:I11-est}
\end{align}
Using the bound $|\nabla w_\delta(\xx)|\le C/\delta$ from
\eqref{eq:wdelta-grad}, we similarly
obtain
\begin{align}
\|I_{1,2}\|_{L^2(\Omega)}^{2}
&\le
C\delta^{4}
\int_\Omega
|\nabla w_\delta(\xx)|^{2}
\!\left(
\int_\Omega
\bar R_\delta(\xx,\yy)\,|f(\yy)|
\,\d\yy\right)^{\!2}
\d\xx
\notag \\
&\le
C\delta^{2}
\int_\Omega
\!\left(
\int_\Omega
\bar R_\delta(\xx,\yy)\,|f(\yy)|
\,\d\yy\right)^{\!2}
\d\xx
\notag \\
&\le
C\delta^{2}\,\|f\|_{L^2(\Omega)}^{2} .
\label{eq:I12-est}
\end{align}
Consequently,
\begin{equation}
\label{eq:I1-est}
\|\nabla I_1\|_{L^2(\Omega)}
\le
C\delta\,\|f\|_{L^2(\Omega)} .
\end{equation}

\medskip
\noindent\emph{Estimate of $\nabla I_2$.} Splitting $\nabla I_2$ as in
\eqref{eq:I1-grad-decomp}, the dominant term is
\[
\frac{2\delta^{2}}{w_\delta(\xx)}
\int_{\partial\Omega}
\nabla_{\!\xx}\bar R_\delta(\xx,\yy)\,g(\yy)\,\d S_\yy .
\]
Using
\[
|\nabla_{\!\xx}\bar R_\delta(\xx,\yy)|
\le \frac{|\xx-\yy|}{2\delta^{2}}R_\delta(\xx,\yy),
\]
the Cauchy--Schwarz inequality, and the boundary moment estimate
\[
\int_{\partial\Omega} |\xx-\yy|^2 R_\delta(\xx,\yy)\,\d S_\yy
\le C\delta,
\]
we obtain the required bound. The term containing $\nabla w_\delta$ is handled
similarly by \eqref{eq:wdelta-grad} and \eqref{eq:kernel-est-3}.
Integrating in $\xx$ and using
$\int_\Omega R_\delta(\xx,\yy)\,\d\xx \le C$ for any
$\yy\in\partial\Omega$, we deduce
\begin{equation}
\label{eq:I2-est}
\|\nabla I_2\|_{L^2(\Omega)}
\le C\sqrt{\delta}\,\|g\|_{L^2(\partial\Omega)} .
\end{equation}

\medskip
\noindent\emph{Estimate of $\nabla I_3$.} The same calculation as for
$\nabla I_1$, with $u_{\delta,N}$ in place of $f$, gives
\begin{equation}
\label{eq:I3-est}
\|\nabla I_3\|_{L^2(\Omega)}
\le
C\delta\,\|u_{\delta,N}\|_{L^2(\Omega)}
\le
C\delta\,\|f\|_{L^2(\Omega)}
+
C\sqrt{\delta}\,\|g\|_{L^2(\partial\Omega)} ,
\end{equation}
where in the second inequality we used \eqref{eq:neumann-l2-est}.

\medskip
\noindent\emph{Estimate of $\nabla S_\delta u_{\delta,N}$.} By
\cite[Lemma~3.1]{meng2026asymptotically}, the smoothing operator $S_\delta$
satisfies
\begin{equation}
\label{eq:Sdelta-grad}
\|\nabla S_\delta u_{\delta,N}\|_{L^2(\Omega)}^{2}
\le
C\,E_{\delta,N}(u_{\delta,N}).
\end{equation}
Combining \eqref{eq:Sdelta-grad} with \eqref{eq:neumann-energy-upper} and
\eqref{eq:neumann-l2-est} gives
\begin{equation}
\label{eq:Sdelta-final}
\|\nabla S_\delta u_{\delta,N}\|_{L^2(\Omega)}
\le
C\!\left(\|f\|_{L^2(\Omega)}
+\frac{1}{\sqrt{\delta}}\,\|g\|_{L^2(\partial\Omega)}\right).
\end{equation}

Substituting \eqref{eq:I1-est}, \eqref{eq:I2-est}, \eqref{eq:I3-est} and
\eqref{eq:Sdelta-final} into \eqref{eq:neumann-decomposition}, we obtain
\begin{equation}
\label{eq:neumann-grad-est}
\|\nabla u_{\delta,N}\|_{L^2(\Omega)}
\le
C\!\left(\|f\|_{L^2(\Omega)}
+\frac{1}{\sqrt{\delta}}\,\|g\|_{L^2(\partial\Omega)}\right).
\end{equation}
Finally, combining \eqref{eq:neumann-l2-est} and
\eqref{eq:neumann-grad-est} yields
\[
\|u_{\delta,N}\|_{H^1(\Omega)}
\le
C\!\left(\|f\|_{L^2(\Omega)}
+\frac{1}{\sqrt{\delta}}\,\|g\|_{L^2(\partial\Omega)}\right),
\]
which proves Proposition~\ref{prop:neumann-wellposedness}. \qed

\section{Proof of Proposition~\ref{prop:neumann-locallimit}}
\label{appendix:neumann-locallimit}

We now prove Proposition~\ref{prop:neumann-locallimit}. Set
\[
e_{\delta,N}(\xx):=
u_{\mathrm{loc},N}(\xx)-u_{\delta,N}(\xx).
\]
Since both $u_{\delta,N}$ and $u_{\mathrm{loc},N}$ belong to $H^1(\Omega)$,
$e_{\delta,N}\in H^1(\Omega)$, and a direct calculation using
\eqref{eq:neumann-local}--\eqref{eq:neumann-nonlocal} shows that
$e_{\delta,N}$ satisfies
\begin{equation}
\label{eq:neumann-error-eq}
\frac{1}{\delta^{2}}
\int_\Omega R_\delta(\xx,\yy)
\bigl(e_{\delta,N}(\xx)-e_{\delta,N}(\yy)\bigr)\,\d\yy
+
\int_\Omega
\bar R_\delta(\xx,\yy)e_{\delta,N}(\yy)\,\d\yy
=
r(\xx),
\quad \xx\in\Omega ,
\end{equation}
where the truncation error $r$ is given by
\begin{align*}
r(\xx)
&=
\frac{1}{\delta^{2}}
\int_\Omega R_\delta(\xx,\yy)
\bigl(u_{\mathrm{loc},N}(\xx)-u_{\mathrm{loc},N}(\yy)\bigr)
\,\d\yy
+
\int_\Omega
\bar R_\delta(\xx,\yy)
\Delta u_{\mathrm{loc},N}(\yy)
\,\d\yy
\\
&\quad
-
2\int_{\partial\Omega}
\bar R_\delta(\xx,\yy)\,
\frac{\partial u_{\mathrm{loc},N}}{\partial \mathbf n}(\yy)
\,\d S_\yy .
\end{align*}

The following truncation-error decomposition is standard for nonlocal Neumann
models and relies on the usual moment and boundary estimates of the kernel; see
\cite{shi2017convergence} for the compact-support version. The truncation
error admits a boundary-layer decomposition $r=r_{in}+r_{bd}$, where, for
$u_{\mathrm{loc},N}\in H^3(\Omega)$,
\begin{equation*}
r_{bd}(\xx)
=
\sum_{j=1}^d
\int_{\partial\Omega}
n^{j}(\yy)\,
(\xx-\yy)\!\cdot\!
\nabla\!\bigl(\partial_{j} u_{\mathrm{loc},N}(\yy)\bigr)\,
\bar R_\delta(\xx,\yy)
\,\d S_\yy ,
\end{equation*}
and $r_{in}:=r-r_{bd}$. Here $n^{j}(\yy)$ denotes the $j$-th component of the
unit outward normal $\mathbf n(\yy)$ at $\yy\in\partial\Omega$. The
following estimates hold:
\begin{align}
\|r_{in}\|_{L^2(\Omega)}
&\le
C\delta\,\|u_{\mathrm{loc},N}\|_{H^3(\Omega)},
&
\|r_{bd}\|_{L^2(\Omega)}
&\le
C\delta^{1/2}\,\|u_{\mathrm{loc},N}\|_{H^3(\Omega)},
\label{eq:rin-rbd-l2}
\\
\|\nabla r_{in}\|_{L^2(\Omega)}
&\le
C\,\|u_{\mathrm{loc},N}\|_{H^3(\Omega)},
&
\|\nabla r_{bd}\|_{L^2(\Omega)}
&\le
C\delta^{-1/2}\,\|u_{\mathrm{loc},N}\|_{H^3(\Omega)} .
\label{eq:rin-rbd-grad}
\end{align}
Moreover, for any $h\in H^1(\Omega)$,
\begin{equation}
\label{eq:rbd-dual}
\!\left|\int_\Omega r_{bd}(\xx)\,h(\xx)\,\d\xx\right|
\le
C\delta\,\|u_{\mathrm{loc},N}\|_{H^3(\Omega)}\,
\|h\|_{H^1(\Omega)} .
\end{equation}

From \eqref{eq:neumann-error-eq} and the same algebra as in
\eqref{eq:neumann-decomposition}, we may rewrite $e_{\delta,N}$ as
\begin{equation}
\label{eq:neumann-error-rep}
e_{\delta,N}(\xx)
=
S_\delta e_{\delta,N}(\xx)
+
\frac{\delta^{2}}{w_\delta(\xx)}
\bigl(r(\xx)+\psi(\xx)\bigr),
\quad
\psi(\xx):=-\!\int_\Omega\!\bar R_\delta(\xx,\yy)e_{\delta,N}(\yy)\,\d\yy .
\end{equation}

Differentiating \eqref{eq:neumann-error-rep} and using
$|\nabla(\delta^{2}/w_\delta)|\le C\delta$, which follows from
\eqref{eq:wdelta-grad}, together with $\delta^{2}/w_\delta\le C\delta^{2}$,
we obtain
\begin{align}
\|\nabla e_{\delta,N}\|_{L^2(\Omega)}^{2}
&\le
2\|\nabla S_\delta e_{\delta,N}\|_{L^2(\Omega)}^{2}
+
C\delta^{2}\,\|r+\psi\|_{L^2(\Omega)}^{2}
+
C\delta^{4}\,\|\nabla(r+\psi)\|_{L^2(\Omega)}^{2} .
\label{eq:grad-error-base}
\end{align}

\medskip
\noindent\emph{Estimates for $\psi$.} By \eqref{eq:kernel-est-2} and the
chain-rule estimate
\[
|\nabla_{\!\xx}\bar R_\delta(\xx,\yy)|
=\frac{|\xx-\yy|}{2\delta^2}R_\delta(\xx,\yy),
\]
together with the scaled second-moment bound of $R_\delta$,
\begin{equation}
\label{eq:psi-est}
\|\psi\|_{L^2(\Omega)}
\le
C\,\|e_{\delta,N}\|_{L^2(\Omega)},
\qquad
\|\nabla\psi\|_{L^2(\Omega)}
\le
\frac{C}{\delta}\,\|e_{\delta,N}\|_{L^2(\Omega)} .
\end{equation}
Combining \eqref{eq:rin-rbd-l2} and \eqref{eq:psi-est}, and using
$0<\delta\le 1$ to bound $\delta^{2}$ by $\delta$, we get
\begin{align}
\|r+\psi\|_{L^2(\Omega)}^{2}
&\le
C\!\left(\|r_{in}\|_{L^2(\Omega)}^{2}
+\|r_{bd}\|_{L^2(\Omega)}^{2}
+\|\psi\|_{L^2(\Omega)}^{2}\right)
\notag \\
&\le
C\!\left(
\delta\,\|u_{\mathrm{loc},N}\|_{H^3(\Omega)}^{2}
+\|e_{\delta,N}\|_{L^2(\Omega)}^{2}\right).
\label{eq:r-psi-l2}
\end{align}
Similarly, by \eqref{eq:rin-rbd-grad} and \eqref{eq:psi-est},
\begin{align}
\|\nabla(r+\psi)\|_{L^2(\Omega)}^{2}
&\le
C\!\left(\|\nabla r_{in}\|_{L^2(\Omega)}^{2}
+\|\nabla r_{bd}\|_{L^2(\Omega)}^{2}
+\|\nabla\psi\|_{L^2(\Omega)}^{2}\right)
\notag \\
&\le
C\!\left(
\frac{1}{\delta}\,\|u_{\mathrm{loc},N}\|_{H^3(\Omega)}^{2}
+\frac{1}{\delta^{2}}\,\|e_{\delta,N}\|_{L^2(\Omega)}^{2}\right).
\label{eq:r-psi-grad}
\end{align}

\medskip
\noindent\emph{Estimate for $\nabla S_\delta e_{\delta,N}$.} Testing
\eqref{eq:neumann-error-eq} with $e_{\delta,N}$ gives
$E_{\delta,N}(e_{\delta,N})=(r,e_{\delta,N})_{L^2(\Omega)}$. Splitting $r$ as
above, applying Cauchy--Schwarz to $(r_{in},e_{\delta,N})$ and using the
dual estimate \eqref{eq:rbd-dual} for $(r_{bd},e_{\delta,N})$,
\begin{align}
\|\nabla S_\delta e_{\delta,N}\|_{L^2(\Omega)}^{2}
&\le
C\,E_{\delta,N}(e_{\delta,N})
=
C\,(r,e_{\delta,N})_{L^2(\Omega)}
\notag \\
&\le
C\,\|r_{in}\|_{L^2(\Omega)}\|e_{\delta,N}\|_{L^2(\Omega)}
+
C\delta\,\|u_{\mathrm{loc},N}\|_{H^3(\Omega)}\,
\|e_{\delta,N}\|_{H^1(\Omega)}
\notag \\
&\le
C\delta\,\|u_{\mathrm{loc},N}\|_{H^3(\Omega)}\,
\|e_{\delta,N}\|_{H^1(\Omega)} ,
\label{eq:Sdelta-error-est}
\end{align}
where in the last step we used \eqref{eq:rin-rbd-l2} and
$\|e_{\delta,N}\|_{L^2(\Omega)}\le \|e_{\delta,N}\|_{H^1(\Omega)}$.

Combining \eqref{eq:grad-error-base},
\eqref{eq:r-psi-l2}, \eqref{eq:r-psi-grad} and
\eqref{eq:Sdelta-error-est}, we obtain
\begin{align}
\|\nabla e_{\delta,N}\|_{L^2(\Omega)}^{2}
&\le
C\delta\,\|u_{\mathrm{loc},N}\|_{H^3(\Omega)}\,
\|e_{\delta,N}\|_{H^1(\Omega)}
\notag \\
&\quad
+
C\delta^{3}\,\|u_{\mathrm{loc},N}\|_{H^3(\Omega)}^{2}
+
C\delta^{2}\,\|e_{\delta,N}\|_{L^2(\Omega)}^{2} .
\label{eq:grad-error-combined}
\end{align}

By the same argument as in \eqref{eq:Sdelta-error-est} together with the
coercivity \eqref{eq:neumann-coercivity},
\begin{equation}
\label{eq:l2-error-est}
\|e_{\delta,N}\|_{L^2(\Omega)}^{2}
\le
C\,E_{\delta,N}(e_{\delta,N})
\le
C\delta\,\|u_{\mathrm{loc},N}\|_{H^3(\Omega)}\,
\|e_{\delta,N}\|_{H^1(\Omega)} .
\end{equation}

Adding \eqref{eq:grad-error-combined} and \eqref{eq:l2-error-est}, and using
$\|e_{\delta,N}\|_{L^2(\Omega)}^{2}\le\|e_{\delta,N}\|_{H^1(\Omega)}^{2}$
to absorb the last term in \eqref{eq:grad-error-combined} for $\delta$
sufficiently small, we obtain
\begin{equation}
\label{eq:h1-error-pre-young}
\|e_{\delta,N}\|_{H^1(\Omega)}^{2}
\le
C\delta\,\|u_{\mathrm{loc},N}\|_{H^3(\Omega)}\,
\|e_{\delta,N}\|_{H^1(\Omega)}
+
C\delta^{3}\,\|u_{\mathrm{loc},N}\|_{H^3(\Omega)}^{2} .
\end{equation}
Applying Young's inequality to the first term on the right of
\eqref{eq:h1-error-pre-young} gives
\begin{equation}
\label{eq:h1-error-final}
\|u_{\mathrm{loc},N}-u_{\delta,N}\|_{H^1(\Omega)}
=
\|e_{\delta,N}\|_{H^1(\Omega)}
\le
C\delta\,\|u_{\mathrm{loc},N}\|_{H^3(\Omega)} .
\end{equation}

Finally, by the standard elliptic regularity estimate for the local Neumann
problem \eqref{eq:neumann-local},
\[
\|u_{\mathrm{loc},N}\|_{H^3(\Omega)}
\le
C\!\left(\|f\|_{H^1(\Omega)}
+\|g\|_{H^{3/2}(\partial\Omega)}\right),
\]
hence
\[
\|u_{\mathrm{loc},N}-u_{\delta,N}\|_{H^1(\Omega)}
\le
C\delta\!\left(\|f\|_{H^1(\Omega)}
+\|g\|_{H^{3/2}(\partial\Omega)}\right),
\]
which completes the proof of Proposition~\ref{prop:neumann-locallimit}.
\qed

\bibliographystyle{abbrv}

\bibliography{ref}

\end{document}